\DeclareAcronym{NF}{
  short = NF,
  long = normalizing flow
  }
\setlist[enumerate]{leftmargin=.5in}
\setlist[itemize]{leftmargin=.5in}
\title{NF-ULA: Normalizing flow-based unadjusted Langevin algorithm for imaging inverse problems}
\author{
Ziruo Cai\thanks{School of Mathematical Sciences, Shanghai Jiao Tong University, China (\href{mailto:sjtu_caiziruo@sjtu.edu.cn}{sjtu\_caiziruo@sjtu.edu.cn}).}
\and
Junqi Tang\thanks{School of Mathematics, University of Birmingham, UK (\email{j.tang.2@bham.ac.uk}).}
\and
Subhadip Mukherjee\thanks{Department of Electronics and Electrical Communication Engineering, Indian Institute of Technology (IIT) Kharagpur, India. (\email{smukherjee@ece.iitkgp.ac.in}).}
\and
Jinglai Li\thanks{School of Mathematics, University of Birmingham, UK (\email{j.li.10@bham.ac.uk}).}
\and
Carola-Bibiane Sch\"onlieb\thanks{Department of Applied Mathematics and Theoretical Physics, University of Cambridge, UK (\email{cbs31@cam.ac.uk}).}
\and
Xiaoqun Zhang\thanks{School of Mathematical Sciences, MOELSC and Institute of Natural Sciences, Shanghai Jiao Tong University,  China(\email{xqzhang@sjtu.edu.cn}).}
}
\begin{document}

\maketitle

\begin{abstract}
Bayesian methods for solving inverse problems are a powerful alternative to classical methods since the Bayesian approach offers the ability to quantify the uncertainty in the solution. In recent years, data-driven techniques for solving inverse problems have also been remarkably successful, due to their superior representation ability. In this work, we incorporate data-based models into a class of Langevin-based sampling algorithms for Bayesian inference in imaging inverse problems. In particular, we introduce NF-ULA (Normalizing Flow-based Unadjusted Langevin algorithm), which involves learning a \textit{normalizing flow} (NF) as the image prior. We use NF to learn the prior because a tractable closed-form expression for the log prior enables the differentiation of it using \textit{autograd} libraries. Our algorithm only requires a normalizing flow-based generative network, which can be pre-trained independently of the considered inverse problem and the forward operator. We perform theoretical analysis by investigating the well-posedness and non-asymptotic convergence of the resulting NF-ULA algorithm. The efficacy of the proposed NF-ULA algorithm is demonstrated in various image restoration problems such as image deblurring, image inpainting, and limited-angle X-ray computed tomography (CT) reconstruction. NF-ULA is found to perform better than competing methods for severely ill-posed inverse problems.

\end{abstract}

\begin{keywords}
Bayesian inference, Langevin algorithms, normalizing flows, inverse problems.
\end{keywords}

\begin{MSCcodes}
62F15, 49N45, 92C55 
\end{MSCcodes}

\section{Introduction}

Imaging inverse problems can be formulated as
$y = Ax  + n  , $
where $y\in \mathbb{R}^m$ is the indirect noisy observation, $A: \mathbb{R}^d\rightarrow\mathbb{R}^m$ is the observation operator, $n$ is the measurement noise, and $x\in\mathbb{R}^d$ represents the unknown image that one aims to recover. In the classical variational framework, the reconstruction problem is formulated as the minimization of an energy functional $J(x)=L(y,Ax)+\alpha\,g(x)$, where $L$ measures data-consistency and $g$ is a regularizer that penalizes undesirable images. Following the surge of deep learning, data-driven regularization methods have become ubiquitous in imaging inverse problems \cite{benning2018modern, arridge2019solving, ongie2020deep}, leading to state-of-the-art results which significantly outperform classical hand-crafted regularization schemes such as the total-variation \cite{chambolle2010introduction} or sparsity-based regularizers (see \cite{benning2018modern} and references therein). Starting from the plug-and-play methods \cite{venkatakrishnan2013plug} which combine proximal-splitting optimization algorithms \cite{combettes2011proximal} with learned denoisers \cite{ronneberger2015u, jin2017deep, zhang2021plug}, researchers have made considerable progress in this direction. Current popular trends in this line of research include the studies in improving practical performances and theoretical guarantees \cite{ romano2017little,hertrich2021convolutional, gilton2019learned,sreehari2016plug,kamilov2023plug, tan2023provably}, the development of deep unrolling networks \cite{monga2021algorithm, adler2018learned}, deep equilibrium models \cite{gilton2021deep}, the studies on the image prior by specific  networks structures \cite{lempitsky2018deep}, the extension of generative models in imaging applications \cite{song2021solving, pan2021exploiting, asim2020invertible, whang2021solving}, operator regularization methods \cite{pesquet2021learning}, learning explicitly the regularization functional such as a gradient-step denoiser \cite{hurault2021gradient}, total deep variation \cite{kobler2021total}, adversarial regularizers \cite{lunz2018adversarial, mukherjee2021end, prost2021learning} and the learned convex regularizer \cite{mukherjee2020learned} with input-convex neural networks \cite{amos2017input}.

While the previously mentioned approaches treat $x$ deterministically, another alternative framework for solving inverse problems is to do it within a Bayesian setting \cite{stuart2010inverse, kaipio2006statistical, tarantola2005inverse}. Different from the functional-analytic methods, Bayesian methods model the image $x$ as a random variable and usually seek to approximate the posterior distribution $p(x | y)$ based on Bayes' formula. The methods based on Bayesian inference can not only give a point estimator (e.g., the maximum a posteriori probability (MAP) estimator) but also describe the uncertainty in the solution in a probabilistic way in terms of variance and credible intervals. The capability of uncertainty quantification is particularly helpful for decision-making and reliability assessment. Typical examples of Bayesian imaging schemes include the classical approach using the total variation prior \cite{louchet2013posterior, pereyra2016proximal}, the works on Markov random fields \cite{blake2011markov}, and more recently the patch-based models \cite{zoran2011learning, yu2011solving, aguerrebere2017bayesian, houdard2018high}.

In Bayesian inference, one explores the posterior distribution to generate samples from them, typically using the Markov Chain Monte Carlo (MCMC) methods~\cite{gilks1995markov}. Among these sampling algorithms, the Langevin Monte Carlo (LMC) algorithms \cite{neal1992bayesian, roberts1996exponential}, also referred to as the \textit{Unadjusted Langevin Algorithms} (ULA), stand out as an increasingly popular tool, since they bridge the gap between theoretical guarantees of nonasymptotic convergence analysis\cite{durmus2017nonasymptotic, dalalyan2017theoretical, de2019convergence} and practical performance\cite{durmus2018efficient, laumont2022bayesian}.  ULA can also be modified into Metropolis-adjusted Langevin algorithm (MALA) \cite{roberts1996exponential}, a non-biased version, by adding a Metropolis-Hastings (MH) accept-reject step. Apart from the MCMC-based methods, there are also other kinds of sampling methods worth mentioning: methods based on variational inference \cite{hoffman2013stochastic, blei2017variational, liu2016stein} posit a family of densities and then attempt to find a member of that family which is close to the target density. Variational auto-encoders (VAEs) \cite{kingma2013auto} approximate the posterior by learning deep encoders and decoders. Generative adversarial networks (GAN) \cite{goodfellow2014generative, creswell2018generative} learn the generator to sample from the training distribution through adversarial learning. More recently, diffusion models \cite{song2020score, ho2020denoising, yang2022diffusion} have been shown to be a powerful tool for image generation. They learn the target distribution by transforming an image into a Gaussian noise and then reversing the noising process.

In recent years, the theoretical analysis and nonasymptotic convergence of ULA \cite{dalalyan2017theoretical, durmus2017nonasymptotic} have opened a new direction of research. Except for convex and smooth potentials \cite{dalalyan2017theoretical, durmus2017nonasymptotic, dalalyan2019user, durmus2019analysis}, ULA for non-convex or non-smooth potentials has also seen great progress. While ULA requires evaluating the gradient of potentials, ULA for non-smooth distributions \cite{pereyra2016proximal, durmus2018efficient,  salim2019stochastic, lehec2021langevin, luu2021sampling, mou2022efficient} draw samples from a smoothed proxy, borrowing the tools such as proximity operators from non-smooth optimization literature. For non-convex potentials, ULA also has convergence guarantees \cite{cheng2018sharp, de2019convergence, majka2020nonasymptotic, erdogdu2021convergence} if some conditions, (e.g., contractivity condition on the drift) are satisfied.

Incorporating data-based approaches into classical algorithms is a trending topic in ULA and Bayesian methods for solving inverse problems. More specifically, one aims to utilize an over-parameterized model learned on given data, such as a neural network, instead of handcrafting the prior. Recently, Langevin Monte Carlo using Plug and Play Prior (PnP-ULA) \cite{laumont2022bayesian} was shown to yield promising results for Bayesian imaging problems. PnP-ULA leverages an implicit image prior learned via a plug-and-play (PnP) \cite{venkatakrishnan2013plug} Lipschitz-continuous image denoiser \cite{ryu2019plug}. Since the true image prior is not assumed to be convex or smooth, PnP-ULA convergence was established for non-convex potentials.

Besides PnP priors \cite{venkatakrishnan2013plug}, normalizing flow (NF)-based approaches \cite{rezende2015variational, dinh2016density, papamakarios2021normalizing} also lead to impressive performance on imaging problems \cite{kingma2018glow, dinh2016density} and have the potential of learning the prior in the Bayesian imaging framework. In this work, we attempt to integrate an image prior that is learned by NF into the Langevin algorithms. Notably, the resulting negative log posterior in our case is non-convex. To make the model well-defined in the Bayesian setting and to ensure that the algorithm is numerically stable, we make minor changes to the standard ULA to add a projection-minus-identity operator on a compact set, akin to PnP-ULA \cite{laumont2022bayesian}. 
As some studies of normalizing flows have shown\cite{rezende2015variational, dinh2016density, papamakarios2021normalizing, kingma2018glow},  training a normalizing flow prior for natural images generally requires utilizing larger networks,  larger training dataset, more computational resources and more time than training a PnP denoiser, our proposed method is more efficient if the normalizing flow prior is pre-trained and available.

The idea of interlacing NF with MCMC algorithms has been considered previously in the literature, but these methods had significant conceptual differences from our approach. For instance, \cite{wu2020stochastic} proposed stochastic NF, an arbitrary sequence of deterministic invertible functions and stochastic sampling blocks, to sample from target density. The authors of \cite{steidl2022generalized, hagemann2022stochastic} considered stochastic NF from a Markov chain point of view and replaced the transition densities with general Markov kernels. \cite{coeurdoux2023normalizing} utilized NF to sample from the target distribution in the latent domain before transporting it back to the target domain relying on MALA. There are some studies combining other  generative models with non-Langevin Monte Carlo algorithms, e.g., \cite{coeurdoux2023plug} introduced a stochastic PnP sampling algorithm leveraging variable splitting to efficiently sample from a posterior distribution using  diffusion-based
generative models\cite{dhariwal2021diffusion}. To summarize, all the above mentioned approaches are different from ours, mainly because they do not directly utilize the log gradient density of NF in Langevin algorithms.

\noindent\subsection{Our contributions} The main contributions of this work are:
\begin{enumerate}
\item We propose NF-ULA, a novel framework of sampling by Langevin Monte Carlo-based algorithms while leveraging a pre-trained normalizing flow induced prior. Since both the density and the log gradient of the density of normalizing flows can be evaluated, NF-ULA can potentially be extended to a Metropolis-adjusted version. 
\item We give a sufficient condition to ensure the Lipschitz gradient of the log density of the normalizing flows since the Lipschitz gradient is one of the most essential conditions to guarantee the convergence of ULA. This might also be useful in the future when an NF-based prior is used in methods other than Langevin algorithms. 
\item We show that the Bayesian solution of NF-ULA is well-defined and well-posed and establish that NF-ULA admits an invariant distribution. We also give a non-asymptotic bound on the bias. 
\item We demonstrate that NF-ULA yields high-quality results in applications such as image deblurring, image inpainting, and limited-angle X-ray computed tomography (CT) reconstruction. For more ill-posed problems, NF-ULA demonstrates stronger regularization than competing methods. We also provide experimental evidence that enhanced training of the NF prior results in improved sampling and reconstruction, especially for severely ill-posed problems (such as limited-angle CT).
\end{enumerate}

The rest of the paper is organized as follows: Sec. \ref{sec:problem_setup} gives a brief review of both Langevin Monte Carlo and normalizing flow, leading to the proposed NF-ULA method. Sec. \ref{sec:theoretical_analysis} presents a theoretical analysis of the Bayesian solution obtained using NF-ULA. In Sec. \ref{sec:Experiments}, we evaluate NF-ULA on image deblurring, image inpainting, and limited-angle CT reconstruction. Final conclusions are summarized in Sec. \ref{sec:conclusions}. The proofs and extra experiments are in the Appendix.

\section{Mathematical background and the proposed method}\label{sec:problem_setup}
We begin by giving some background on Langevin Monte Carlo (LMC) algorithms and normalizing flow. Subsequently, we propose NF-ULA, an LMC algorithm that utilizes a pre-trained normalizing flow network.
\subsection{LMC for Non-smooth Potentials}\label{sec:LMC}
In Bayesian inference, there is a broad class of problems where we seek to draw samples $ \{X_k\}_{k = 1}^{K}, X_k\in \mathbb{R}^d$, from a target posterior distribution $p(x | y)$, given the observation $y \in \mathbb{R}^m$. Using Bayes' formula, we have that
\begin{equation}
\label{Bayes formula}
p (x | y)= \dfrac{p(y | x) p(x)}{\int p(y | \tilde{x}) p(\tilde{x}) \mathrm{d} \tilde{x}}.
\end{equation}
Under some assumptions on the likelihood $p(y|x)$ and the prior $p(x)$, the posterior distribution $p(x|y)$ is well-posed; meaning that it is well-defined, unique, and varies continuously in $y$ with respect to appropriate distance metrics for probability distributions\cite{sprungk2020local, latz2020well}. The well-known LMC approach \cite{neal1992bayesian, roberts1996exponential}, also referred to as the \textit{unadjusted Langevin algorithm} (ULA), can efficiently sample from $p(x | y)$ using the following Markov chain:
\begin{equation}
\label{ULA}
\begin{aligned}
    X_{k+1} &= X_k+\delta \nabla \log p \left( X_k | y \right) + \sqrt{2 \delta} Z_{k+1} \\ 
    & = X_k+\delta \nabla \log p\left(y | X_k\right)+\delta \nabla \log p\left(X_k\right)+\sqrt{2 \delta} Z_{k+1},    
\end{aligned}    
\end{equation}
where $\{ Z_k \}_k \sim \mathcal{N}(0, I^d)$ is a family of i.i.d. standard Gaussian random variables. The ULA approach in \eqref{ULA} is based on the Euler-Maruyama (EM) discretization with step-size $\delta$ of the over-damped Langevin stochastic differential equation (SDE) given by
\begin{equation}
    \mathrm{d} X_t=\nabla \log p\left(X_t | y\right)\mathrm{d} t + \sqrt{2} \mathrm{~d} B_t,
    \label{eq:LSDE_posterior}
\end{equation}
where $B_t$ is a Brownian motion. It has been shown in \cite{dalalyan2017theoretical, durmus2017nonasymptotic} that when $-\log p(x| y)$ is continuously differentiable and has Lipschitz gradient,    the convergence of ULA can be guaranteed if convexity of $-\log p(x| y)$ \cite{dalalyan2017theoretical} or contractivity in the tails \cite{durmus2017nonasymptotic} is satisfied. The convergence is subject to a bias related to the step-size $\delta$. In general, smaller $\delta$ leads to a smaller bias and larger $\delta$ leads to faster convergence of the Markov Chain. The non-asymptotic bias and convergence analysis of ULA have remained relatively under-explored until the last few years \cite{dalalyan2017theoretical, durmus2017nonasymptotic, dalalyan2019user, durmus2019analysis}. Notably, the bias of ULA in \eqref{ULA} can be removed by adding a Metropolis-Hastings (MH) accept-reject step, leading to the so-called Metropolis-adjusted Langevin algorithm (MALA) \cite{roberts1996exponential}. In this paper, we will focus on ULA without any MH adjustments.

When the potential $-\log p(x)$ is convex but non-smooth, \cite{durmus2018efficient}  uses a smooth proxy utilizing the Moreau envelope $U^{(\lambda)}(x)$ of $U(x) = -\log p(x)$ in \eqref{ULA}. The Moreau envelope $U^{(\lambda)}(x)$ and the proximity operator $\operatorname{prox}_{\lambda, U}$ of $U(x)$ are defined as
\begin{equation*}
\begin{aligned}
& {U}^{(\lambda)}(x) :=  \underset{z\in \mathbb{R}^d}{\inf}\, \left( U(z)+\frac{1}{2\lambda}\|x-z\|_2^2\right),\text{\,\,and}
&\operatorname{prox}_{\lambda,U} (x) := \underset{z\in \mathbb{R}^d}{\arg\min}\, \left( U(z)+\frac{1}{2\lambda}\|x-z\|_2^2\right) .
\end{aligned}
\end{equation*}
For a convex function $U$, $\operatorname{prox}_{\lambda,U} (x)$ is unique and well-defined.

Since the Moreau envelope $U^{(\lambda)}(x)$ is always continuously differentiable \cite{bauschke2011convex, combettes2005signal} even if $U(x)$ is not, the authors of \cite{durmus2018efficient} replace $ \nabla \log p(x)  $  by $ - \nabla U^{(\lambda)}(x) =  \left( \operatorname{prox}_{\lambda, U}(x) - x  \right) / \lambda  $, resulting in Moreau-Yoshida regularized ULA (referred to as MYULA), which requires the proximal operator of $U(x)$ in each iteration of \eqref{ULA}.

In a more general case where the prior $p(x)$ is not available in closed form, the authors of \cite{laumont2022bayesian} propose a plug-and-play (PnP) denoising-based approach for learning the prior \cite{venkatakrishnan2013plug, ryu2019plug}. This is achieved by training a Lipschitz-continuous Gaussian denoiser $D_\varepsilon(x)$. More precisely, $D_{\varepsilon}(x)$ is trained on a given dataset $\{ x_n \}_{n = 1}^{N}$ by learning to remove Gaussian noise of zero-mean and $\varepsilon$ variance added to the clean images $x_n$, which are  i.i.d. samples of $p(x)$.  The ideal minimum mean-squared-error (MMSE) denoiser takes the form
\begin{align}
D_{\varepsilon}(x) 
 =(2 \pi \varepsilon)^{-d / 2} \int_{\mathbb{R}^d} \tilde{x} \exp \left[-\|x-\tilde{x}\|^2 /(2 \varepsilon)\right] p(\tilde{x}) \mathrm{d} \tilde{x}.
\end{align}
The noisy data follows the Gaussian-smoothed prior 
\begin{equation*}
\label{Gaussian_smoothing_convolution}
    p_\varepsilon(x) =(2 \pi \varepsilon)^{-d / 2} \int_{\mathbb{R}^d} \exp \left[-\|x-\tilde{x}\|_2^2 /(2 \varepsilon)\right] p(\tilde{x}) \mathrm{d} \tilde{x},
\end{equation*}
which is the convolution of the non-explicit prior $p(x)$ with a Gaussian smoothing kernel. Similar to the Moreau envelope \cite{bauschke2011convex, combettes2005signal}, $p_\varepsilon$ is always differentiable and satisfies Tweedie's identity \cite{efron2011tweedie}: $\varepsilon \nabla \log p_{\varepsilon}(x)=D_{\varepsilon}(x)-x$.
While computing $\nabla \log p(x)$ could be intractable, one can use $\nabla\log p_\varepsilon(x)$ as a surrogate in \eqref{ULA}, leading to the PnP-ULA approach \cite{laumont2022bayesian}:
\begin{equation}
\label{PnP-ULA}
\begin{aligned}
\text{(PnP-ULA)}:~\quad X_{k+1}&= X_k+\delta \nabla \log p\left(y | X_k\right) \\
& \qquad + \dfrac{\delta \alpha }{\varepsilon}\left(D_{\varepsilon}\left(X_k\right)-X_k\right)  + \dfrac{\delta }{\lambda}\left(\Pi_C\left(X_k\right)-X_k\right) +\sqrt{2 \delta} Z_{k+1},
\end{aligned}
\end{equation} 
where $\alpha>0$ is a regularization parameter associated with the PnP prior and $ \{Z_{k}\}_k$ are i.i.d. drawn from $ \mathcal{N}(0, I^d)$. A projection  $\Pi_C\left(X_k\right)$ onto a convex and compact set $C$ is added in each iteration to enable the theoretical analysis for PnP-ULA. $\lambda>0$ is a parameter associated with the operator $\Pi_C - \operatorname{Id}$.  Moreover,  the Lipschitz continuity of the denoiser $D_\varepsilon(x)$ is required for convergence. A detailed convergence analysis of \eqref{PnP-ULA} is available in \cite{laumont2022bayesian}.

\subsection{Normalizing Flow}\label{sec:NF}
Similar to a PnP prior, a flow-based model can also serve as a prior. A flow-based model seeks to express $x\in \mathbb{R}^{d}$ as 
\begin{equation}
\label{normalizing flow}
x=T(z),
\end{equation}
where $T:\mathbb{R}^{d}\rightarrow \mathbb{R}^{d}$ is an invertible transformation applied to $z\in \mathbb{R}^{d}$, where $z \sim q_{z}(z)$. Here, $q_{{z}}(z)$ is the input (or, latent) distribution of the flow-based model and is generally chosen to be a distribution that can be sampled easily, such as a multivariate Gaussian\cite{rezende2015variational, kingma2016improved, papamakarios2021normalizing, kobyzev2020normalizing}.   Apart from $T:\mathbb{R}^d\rightarrow \mathbb{R}^d$ being invertible, both $T$ and $T^{-1}$ must be differentiable \cite{rezende2015variational, papamakarios2021normalizing}. The flow-based model is also called \textit{normalizing flow} since $T^{-1}$  implicitly transforms $q(x)$, the distribution of $x$, into a normal distribution.  In practice, $T$ is typically implemented with an invertible neural network \cite{ dinh2016density, kingma2018glow}. By a change of variables in \eqref{normalizing flow}, the distribution of $x$ can be written as
\begin{equation}
\label{NF:change of variable}
\begin{aligned}
    q(x)&=q_{z}(z)\left|\operatorname{det} J_{T}(z)\right|^{-1} =q_{z}\left(T^{-1}(x)\right)\left|\operatorname{det} J_{T^{-1}}(x)\right|,
\end{aligned}
\end{equation}
 where $ z=T^{-1}(x)$ and $J_{T}(z)$ is the $d \times d$ Jacobian matrix of $T$.
Many normalizing flows \cite{papamakarios2021normalizing, kingma2018glow, rezende2015variational, kingma2016improved, papamakarios2017masked}  use specific  network architectures such that $T^{-1}$ is a triangular mapping, that is, the Jacobian $ J_{T^{-1}}(x) $ is a triangular matrix, which simplifies the calculation of $ \left|\operatorname{det} J_{T^{-1}}(x) \right|$.  Note that $T$ is used to generate $x$ from $z$, and $T^{-1}$ is needed for evaluating the density $q(x)$.

Some works on normalizing flow use coupling layers in the network to make  $T^{-1}$  a triangular mapping \cite{dinh2014nice, dinh2016density, papamakarios2017masked, kingma2016improved, kingma2018glow}. Denote $G(x) = T^{-1}(x)$, $G:\mathbb{R}^{d}\rightarrow \mathbb{R}^{d}$. Let $x_j$ be the $j$-th element of $x$ and $x_{<j}$ be the elements before $x_j$, i.e. $x_1, \cdots, x_{j - 1}$. Then, for one-layer network, \cite{jaini2020tails} summarizes the  coupling layer-based flows as $G_j(x_j, x_{<j}) = \varphi_j(x_{<j}) x_j + \eta_j(x_{<j}) $, where $G_j$ is the $j$-th element of the vector $G(x)$ and the functions $\phi_j$ and $\eta_j$ map $x_{<j}$ to a real number. The Jacobian $J_G(x)$ is triangular since $G_j$ only depends on $x_j$ and $x_{<j}$.

Assume that the unknown prior distribution that we aim to learn is $p(x)$. Then, the forward KL divergence between the target distribution $p(x)$ and the output distribution $q(x)$ of the NF model \cite{rezende2015variational, papamakarios2021normalizing, kobyzev2020normalizing} can be written as  
\begin{align}
\label{KL-NF}
 D_{\text{KL}}\left( p   , q \right) 
&=-\mathbb{E}_{p ({x})}\left[\log q({x}  )\right]+\text {const.} \\
& =-\mathbb{E}_{p ({x})}\left[ \log q_{{z}}\left(T^{-1}({x}  )  \right)+\log \left|\operatorname{det} J_{T^{-1}}({x}  )\right|\right]+\text {const.} \nonumber
\end{align}
 When the transformation $T$ is parameterized by an invertible neural network $ T_\theta$ with parameters $\theta\in\Theta$, we denote the parameterized density of $x$ as $q_\theta(x)$ and the optimization problem of learning $T_\theta$ reads:
\begin{equation}
\label{KL-NF-parameterized}
    \underset{\theta\in \Theta}{\min }~ D_{\text{KL}}(p,q_\theta).
\end{equation}
Given samples $\left\{{x}_n\right\}_{n=1}^N$ drawn i.i.d. from $p ({x})$, we can estimate the expectation in \eqref{KL-NF} by Monte Carlo averaging over the training samples $\{ x_n \}_{n = 1}^N$. Correspondingly, the loss function for training the NF model becomes 
\begin{align}
\label{KL-real_loss}
& \mathcal{L}(\theta) =  - \dfrac{1}{N}\sum_{i = 1}^{N} \left(   \log q_{{z}}\left(T_\theta^{-1}({x_i}  )  \right)+\log \left|\operatorname{det} J_{T_\theta^{-1}}({x_i}  )\right|   \right)+ \text{const.}
\end{align}
Generally, it is reasonable to assume that the data samples $\{ x_i \}_i^N$ lie within a compact set $C_R \subset \mathbb{R}^d$. In particular, when the flow-based model is learned on imaging data, it is common to set $C_R = [0, 1]^d$. Knowing the set where the data samples lie will give us the intuition to select some parameters in the next section. From the numerical observations,  the networks also partially  know $C_R$ while trained from the data  - the knowledge of $C_R$ is implicitly encapsulated in a well-trained flow model, meaning that most generated samples using a well-trained NF model fall within $C_R$.

\subsection{ULA with NF-prior }
In this section, we propose a framework for sampling using the LMC algorithm based on a pre-trained normalizing flow network. Given data samples $\{x_n \}_{n = 1}^N$  drawn   i.i.d.  from $p(x)$, one can approximate $p(x)$ by learning a flow-based model $x = T_\theta(z)$, with output distribution $q_\theta(x) = q_{z}\left(T_\theta^{-1}(x)\right)\left|\operatorname{det} J_{T_\theta^{-1}}(x)\right|$. Once $q_\theta(x)$ is learned, $\log q_\theta(x)$ is always differentiable since $T_{\theta}$ and $T_{\theta}^{-1}$ are differentiable.  By replacing $p(x)$ with $q_\theta(x)$ in \eqref{ULA}, the ULA scheme boils down to
\begin{equation*}
    X_{k+1}=X_k+\delta \nabla \log p\left(y | X_k\right)+ \delta \nabla \log q_\theta( X_k ) +\sqrt{2 \delta} Z_{k+1}.
\end{equation*}
Since convexity of $-\log q_\theta(x)$ and the Lipschitz continuity of its gradient are not guaranteed to be satisfied, one does not yet have the sufficient conditions to infer convergence and numerical stability similar to the cases in \cite{dalalyan2017theoretical, durmus2017nonasymptotic}. In this work, we follow \cite{laumont2022bayesian} to impose a projection $\Pi_C\left(X_k\right)$ onto a convex and compact set $C$ to ensure that the posterior distribution is well-defined and propose the resulting NF-ULA algorithm (c.f. Algorithm \ref{NF-ULA}). 
\begin{algorithm}[!htbp]
\caption{Normalizing Flow-based Unadjusted Langevin algorithm (NF-ULA)}\label{NF-ULA}
\begin{algorithmic}
\State Input: $y\in \mathbb{R}^m$, $X_0\in \mathbb{R}^d$, $\alpha > 0$, $\lambda > 0$, $K\in \mathbb{N}$, $C\subset \mathbb{R}^d$
\State $\mathrm{L}_y$: Lipschitz constant of $\nabla \log p(y| x)$.
\State $\mathrm{L}$: Lipschitz constant of  $\nabla \log q_\theta(x)$. 
\State Output: $\{ X_k \}_{k = 1}^{K}$
\\
\State Set: $k = 0$, $\delta < (1 / 6)\left(\mathrm{L}_y+ \alpha\mathrm{L} +1 / \lambda\right)^{-1} $. 
\State Initialize $X_0$ according to the considered problems.
\While{$k < K$}
\State $Z_{k+1} \sim \mathcal{N}(0, I^d)$
\State $ X_{k+1}=X_k+\delta  \nabla \log p\left(y | X_k\right) + \delta \alpha \nabla \log q_\theta( X_k ) + \dfrac{\delta}{\lambda}\left(\Pi_C\left(X_k\right)-X_k\right) +\sqrt{2 \delta} Z_{k+1} $
\State $k = k + 1$
\EndWhile
\end{algorithmic}
\end{algorithm}
The parameter $\alpha > 0$ controls how strongly the regularization of $q_\theta$ is imposed and $\lambda$ controls the amount of the projection $(\Pi_C  - \mathrm{Id})$ enforced. Theoretical analysis of NF-ULA is presented in Sec. \ref{sec:theoretical_analysis}, while in Sec. \ref{sec:Experiments}, we provide some general guidelines for selecting the hyper-parameters involved in NF-ULA. One can efficiently compute $\nabla \log q_\theta(x)$ using the automatic differentiation libraries in the standard deep learning frameworks (such as \texttt{PyTorch}). 

\noindent \textbf{Remark}: Algorithm \ref{NF-ULA} only requires evaluating  the   $\nabla \log q_\theta(x)$    and its Lipschitz constant. Our theoretical analysis in Sec. \ref{sec:theoretical_analysis} depends on the properties of $q_{\theta}(x)$ and holds even when $q_{\theta}$ does not arise from a normalizing flow. This is essential since in our CT experiments in Sec. \ref{sec:CT}, we utilize \textit{patchNR} \cite{altekruger2022patchnr}, a normalizing flow-based regularizer which cannot generate $x$ by \eqref{normalizing flow} but is able to evaluate the log gradient $\nabla \log q_\theta(x)$. Moreover, since $q_\theta(x)$ can also be evaluated given $x$, Algorithm \ref{NF-ULA} can be extended to a Metropolis-adjusted version by adding an accept-reject step.     We leave this as a possible future work.


It is imperative to understand why the projection $(\Pi_C - \mathrm{Id}) $ is necessary for the convergence of NF-ULA. Let $\iota^{(\lambda)}_C (x)$ be the $\lambda$-Moreau envelope \cite{bauschke2011convex} of the indicator function 
\begin{equation*}
\iota_C(x) = 
\begin{cases}
0,       & x\in C, \\
+\infty, & x\notin C.
\end{cases}
\end{equation*}
Then, we have that
\begin{equation*}
\begin{aligned}
& \iota^{(\lambda)}_C(x) := \underset{u\in \mathbb{R}^d}{\inf}\, \left(\iota_C(u)+\frac{1}{2\lambda}\|x-u\|_2^2\right) = \dfrac{1}{2\lambda}\left\| x - \Pi_C(x)  \right\|_2^2,
\\
&\text{and\,\,} \nabla \iota^{(\lambda)}_C(x) = \dfrac{x - \operatorname{Prox}_{\iota_C}(x)}{\lambda} = \dfrac{x - \Pi_C(x) }{\lambda},
\end{aligned}
\end{equation*}
where $\Pi_C$ is the projection operator on the convex and compact (i.e., closed and bounded) set $C\subset \mathbb{R}^d$. Define $p_\lambda(x | y )$ as
\begin{equation}
\label{definition:posterior_true}
    p_\lambda(x | y ) = \dfrac{
    p(y | x) q^\alpha_\theta(x) \exp(-\iota^{(\lambda)}_C(x))   }
    {\int_{\mathbb{R}^d} p(y | \tilde{x}) q^\alpha_\theta(\tilde{x}) \exp(-\iota^{(\lambda)}_C(\tilde{x})) \mathrm{d} \tilde{x} },
\end{equation}
where the exponent $\alpha>0$. The subscript $\lambda$ in $p_\lambda$ underlines the distinction from the posterior $p(x|y) = p(y|x)p(x) / p(y)$. Since $\theta$ is fixed if the NF is pre-trained and $\alpha$ is adjusted in the experiments section, they are not in the notation of $p_\lambda$ for brevity.   We show in Sec. \ref{sec:well-posedness} that $p_\lambda(x | y)$ is well-defined and therefore the projection term is necessary for NF-ULA, without which, \eqref{definition:posterior_true} is not guaranteed to be well-defined in our settings.   Denote by $\pi_{\lambda, y}$ (which we will write as $\pi_{\lambda}$ for brevity) the probability measure whose density is $p_\lambda(x | y)$ in \eqref{definition:posterior_true}, i.e.,
\begin{equation}
\label{definition:posterior_measure}
\dfrac{\mathrm{d} \pi_{\lambda}}{\mathrm{d}\pi_{\text{leb}}} (x) = p_\lambda(x | y),
\end{equation}
where $\pi_{\text{leb}}$ denotes the Lebesgue measure. Then, NF-ULA in Algorithm \ref{NF-ULA} is essentially equivalent to
\begin{equation}
\label{NF-ULA-simplified}
    X_{k+1}=X_k+\delta \nabla \log  p_\lambda(X_k | y) + \sqrt{2 \delta} Z_{k+1}.
\end{equation}
For standard ULA \eqref{ULA}, the tail-decay condition (log distribution tail asymptotically proportional to minus quadratic function) was first studied in \cite{roberts1996exponential, stramer1999langevin} and was shown to imply the convergence of ULA. For NF-ULA \eqref{NF-ULA-simplified}, we want to emphasize that in most of our experiments, NF-ULA is convergent while using a well-pre-trained normalizing flow, even without the projection term. This is presumably because the density $q_\theta$ of a well-trained normalizing flow already satisfies the tail-decay condition \cite{roberts1996exponential, stramer1999langevin} and most of the probability mass lies within $C$. For the cases where the normalizing flow is poorly trained, one should select a smaller $C$, without which the samples generated by NF-ULA will go far beyond our expected region (for imaging it is $C_R = [0, 1]^d$).

\section{Theoretical Analysis}\label{sec:theoretical_analysis}
We define some useful notations for our analysis in Sec. \ref{sec:notations} and present a theoretical analysis (well-definedness and well-posedness)  of the Bayesian posterior $p_\lambda(x| y)$ in Sec. \ref{sec:well-posedness}. Subsequently, we prove the convergence and non-asymptotic bias of NF-ULA in Sec. \ref{sec:nonasymptotic-NF-ULA}. 

\subsection{Notations}\label{sec:notations}
Denote by $\mathcal{B}\left(\mathbb{R}^d\right)$ the Borel $\sigma$-field of $\mathbb{R}^d$. Let $\mu$ be a probability measure on $\left(\mathbb{R}^d, \mathcal{B}\left(\mathbb{R}^d\right)\right)$ and $f$ be a $\mu$-integrable function. Denote by $\mu(f)$ the integral of $f$ w.r.t. $\mu$. For measurable $f: \mathbb{R}^d \rightarrow \mathbb{R}$  and measurable $V$ : $\mathbb{R}^d \rightarrow[1, \infty)$ , the $V$-norm of $f$ is defined as $\|f\|_V=\sup _{\tilde{x} \in \mathbb{R}^d}|f(\tilde{x})| / V(\tilde{x})$. Let $\xi$ be a finite signed measure on $\left(\mathbb{R}^d, \mathcal{B}\left(\mathbb{R}^d\right)\right)$. Then the $V$-total variation norm  of $\xi$ is defined as
\begin{equation}
\label{definition:V-norm}
\|\xi\|_V=\sup _{\|f\|_V \leqslant 1}\left|\int_{\mathbb{R}^d} f(\tilde{x}) \mathrm{d} \xi(\tilde{x})\right| .
\end{equation}
Note that if $V=1$, then $\|\cdot\|_V$ is the total variation $\|\cdot\|_{\text{TV}}$. $\|\cdot\|_V$ is weaker than $\|\cdot\|_{\text{TV}}$ and from the definitions one has $\|\xi\|_{\text{TV}}  \leqslant\|\xi\|_V$. $\|\cdot\|_V$ has been used a lot in the studies of ULA\cite{durmus2017nonasymptotic, de2019convergence, laumont2022bayesian}.

We denote by $\mathscr{P}\left(\mathbb{R}^d\right)$ the set of probability measures over $\left(\mathbb{R}^d, \mathcal{B}\left(\mathbb{R}^d\right)\right)$ and for any $m \in \mathbb{N}, \mathscr{P}_m\left(\mathbb{R}^d\right)=\left\{\nu \in \mathscr{P}\left(\mathbb{R}^d\right): \int_{\mathbb{R}^d}\|\tilde{x}\|^m \mathrm{~d} \nu(\tilde{x})<+\infty\right\}$. Denote by $\mathbf{W}_p$ as Wasserstein-$p$ metric:
\begin{equation}
\mathbf{W}_p(\mu, \nu) = \left( \inf_{\gamma \in \Gamma(\mu, \nu)} \mathbf{E}_{(x, y)\sim \gamma}\| x - y \|^p \right)^{1/p},\quad p\geqslant 1,
\end{equation}
where $\Gamma(\mu, \nu)$ is the set of all joint probability whose marginal distributions are $\mu$ and $\nu$ respectively.

Let $b \in \mathrm{C}\left(\mathbb{R}^d, \mathbb{R}^d\right)$ where  $C\left(\mathbb{R}^d, \mathbb{R}^d\right)$ stands for the set of all continuous functions from $\mathbb{R}^d$ to $\mathbb{R}^d$. We consider the Markov chain $\left(X_k\right)_{k \in \mathbb{N}}$ given by the following recursion for any $k \in \mathbb{N}$ and $x \in \mathbb{R}^d$, initialized at $X_0=x$:
\begin{equation*}
\begin{aligned}
    & X_{k+1}=X_k+\gamma b\left(X_k\right)+\sqrt{2 \gamma} Z_k,
\end{aligned}
\end{equation*}
where $ \gamma>0$ and $\left\{Z_k: k \in \mathbb{N}\right\}$ a family of i.i.d. Gaussian random variables with zero mean and identity covariance matrix. We define its associated Markov kernel $\mathrm{R}_\gamma$ : $\mathbb{R}^d \times \mathcal{B}\left(\mathbb{R}^d\right) \rightarrow[0,1]$ as follows for any $x \in \mathbb{R}^d$ and $\mathrm{A} \in \mathcal{B}\left(\mathbb{R}^d\right)$: 
\begin{equation*}
    \mathrm{R}_\gamma(x, \mathrm{~A})=(2 \pi)^{-d / 2}\int_{\mathbb{R}^d} \mathbf{1}_{\mathrm{A}}(x+\gamma b(x)+\sqrt{2 \gamma} z) \exp \left[-\|z\|^2 / 2\right] \mathrm{d} z ,
\end{equation*}
where $\mathbf{1}_\mathrm{A}(x)  $ is the function taking the value $1$ if $x\in \mathrm{A}$ or $0$ if $x\notin \mathrm{A}$.  We say that $\mathrm{R}_\gamma$ satisfies a discrete drift condition $\mathbf{D}_{\mathrm{d}}(W, \zeta_\mathrm{d}, c)$ if there exist $\zeta_\mathrm{d} \in[0,1), c \geqslant 0$ and a measurable function $W: \mathbb{R}^d \rightarrow[1,+\infty)$ such that for all $x \in \mathbb{R}^d$
\begin{equation*}
    \mathrm{R}_\gamma W(x) \leqslant \zeta_\mathrm{d} W(x)+c,
\end{equation*}

where $\mathrm{R}_\gamma W(x) := \int_{\mathbb{R}^d} \mathrm{R}_\gamma(x, \mathrm{d} \tilde{x})W(\tilde{x}) $.  Note that this drift condition implies the existence of an invariant probability measure if $\mathrm{R}_\gamma$ is a Feller kernel and the level sets of $W$ are compact, see \cite{de2019convergence} and Theorem 12.3.3 in \cite{douc2018markov}.  

Similarly, let $b \in \mathrm{C}\left(\mathbb{R}^d, \mathbb{R}^d\right)$ such that for any $x \in \mathbb{R}^d$, the following SDE admits a unique strong solution
\begin{equation}
\label{eq:SDE}
\begin{aligned}
    & \mathrm{d} \mathbf{X}_t=b\left(\mathbf{X}_t\right) \mathrm{d} t+\sqrt{2} \mathrm{~d} \mathbf{B}_t,
    \\
    & \mathbf{X}_0=x,
\end{aligned}
\end{equation}
where  $\left(\mathbf{B}_t\right)_{t \geqslant 0}$ is a $d$-dimensional Brownian motion. For any $x \in \mathbb{R}^d$ and $\mathrm{A} \in \mathcal{B}\left(\mathbb{R}^d\right)$, equation \eqref{eq:SDE} defines a Markov semi-group $\left(\mathrm{P}_t\right)_{t \geqslant 0}$  by $\mathrm{P}_t(x, \mathrm{~A})=\mathbb{P}\left(\mathbf{X}_t \in \mathrm{A}\right)$ where $\left(\mathbf{X}_t\right)_{t \geqslant 0}$ is the solution of \eqref{eq:SDE} with $\mathbf{X}_0=x$. For any $f \in \mathrm{C}^2\left(\mathbb{R}^d, \mathbb{R}\right)$, define the generator $\mathcal{A}$ of $\left(\mathrm{P}_t\right)_{t \geqslant 0}$ by $\mathcal{A} f=\langle\nabla f, b(x)\rangle+\Delta f$, where $\Delta$ is the Laplace operator. We say that $\left(\mathrm{P}_t\right)_{t \geqslant 0}$ on $\mathbb{R}^d \times \mathcal{B}\left(\mathbb{R}^d\right)$ with extended infinitesimal generator $(\mathcal{A}, \mathrm{D}(\mathcal{A}))$ (see e.g. \cite{meyn1993stability} for the definition of $(\mathcal{A}, \mathrm{D}(\mathcal{A}))$ ) satisfies a continuous drift condition $\mathbf{D}_{\mathrm{c}}(W, \zeta, \beta)$ if there exist $\zeta>0, \beta \geqslant 0$ and a measurable function $W: \mathbb{R}^d \rightarrow[1,+\infty)$ with $W \in \mathrm{D}(\mathcal{A})$ such that for all $x \in \mathbb{R}^d$,
\begin{equation*}
    \mathcal{A} W(x) \leqslant-\zeta W(x)+\beta.
\end{equation*}
This assumption is the continuous counterpart of the discrete drift condition $\mathbf{D}_{\mathrm{d}}(W, \zeta_\mathrm{d}, c)$, which will be used in Appendix \ref{proof:nonasymptotic-bias}.

\subsection{Well-posedness of the Bayesian solution}\label{sec:well-posedness}
In this section, we first prove that the posterior distribution \eqref{definition:posterior_true} is well-defined. Secondly, we prove the well-posedness for the Bayesian solution, i.e., the Lipschitz continuity of the posterior measure \eqref{definition:posterior_measure} with respect to changes in $y$. To start with, we give a lemma that will be used later.
\begin{lemma}\label{lemma:finite-moment}
Let $\lambda>0$. For any convex and compact subset $C$ of $\mathbb{R}^d$ and for all $k\in \mathbb{N}$, it holds that
\begin{equation*}
\int_{\mathbb{R}^d} \left\| x \right\|^k  \exp\left( - \dfrac{\left\| x - \Pi_C(x) \right\|_2^2}{2\lambda}\right) \mathrm{d} x < +\infty.
\end{equation*}
\end{lemma}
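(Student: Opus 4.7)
The plan is to split the domain $\mathbb{R}^d$ into $C$ and $\mathbb{R}^d \setminus C$, and show each piece of the integral is finite.

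First, I would handle the integral over $C$. For $x \in C$, the projection satisfies $\Pi_C(x) = x$, so the exponential factor equals $1$. Since $C$ is compact, $\|x\|^k$ is bounded on $C$ by some constant $M_k$, and $C$ has finite Lebesgue measure. Hence
\begin{equation*}
\int_C \|x\|^k \exp\left(-\tfrac{\|x-\Pi_C(x)\|_2^2}{2\lambda}\right) \mathrm{d}x \;\leqslant\; M_k \cdot \pi_{\mathrm{leb}}(C) \;<\; +\infty.
\end{equation*}

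Next, I would handle the integral over $\mathbb{R}^d \setminus C$. The key geometric fact is that $\|x - \Pi_C(x)\|_2 = \mathrm{dist}(x, C)$, the Euclidean distance from $x$ to $C$. Since $C$ is compact, fix $R>0$ with $C \subset \overline{B}(0, R)$. For any $x$ with $\|x\| > R$, pick $x_0 \in C$ with $\|x_0\| \leqslant R$ and observe
\begin{equation*}
\|x - \Pi_C(x)\|_2 \;=\; \mathrm{dist}(x, C) \;\geqslant\; \|x\| - \|x_0\| \;\geqslant\; \|x\| - R.
\end{equation*}
Thus on $\{x : \|x\| > R\}$ the integrand is bounded above by $\|x\|^k \exp\bigl(-(\|x\|-R)^2/(2\lambda)\bigr)$, while on the annulus $\{x \in \mathbb{R}^d \setminus C : \|x\| \leqslant R\}$ the integrand is bounded by $R^k$ on a set of finite Lebesgue measure. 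Passing to spherical coordinates on the tail,
\begin{equation*}
\int_{\|x\|>R} \|x\|^k \exp\!\left(-\tfrac{(\|x\|-R)^2}{2\lambda}\right) \mathrm{d}x \;=\; \sigma_{d-1} \int_R^{\infty} r^{k+d-1} \exp\!\left(-\tfrac{(r-R)^2}{2\lambda}\right) \mathrm{d}r,
\end{equation*}
where $\sigma_{d-1}$ is the surface measure of the unit sphere. This one-dimensional integral is finite since the Gaussian decay dominates any polynomial growth in $r$.

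Combining the three pieces (the integral over $C$, the annulus in $\mathbb{R}^d\setminus C$ inside $\overline{B}(0,R)$, and the Gaussian tail) yields the claim. I do not anticipate a genuine obstacle here: the only nontrivial ingredient is the elementary lower bound $\mathrm{dist}(x,C) \geqslant \|x\| - R$, which follows immediately from the triangle inequality together with the boundedness of $C$. Convexity of $C$ is not strictly needed for this lemma (closedness and boundedness suffice, so that $\Pi_C$ is well-defined as a selection and $\mathrm{dist}(\cdot, C)$ has the claimed form), but the hypotheses of the lemma are as stated.
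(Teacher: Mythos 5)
Your proposal is correct and follows essentially the same route as the paper: bound $\|x-\Pi_C(x)\|_2 \geqslant \|x\|-R$ outside a ball containing $C$ via the reverse triangle inequality (noting $\Pi_C(x)\in C\subset \overline{B}(0,R)$), so the tail integrand is dominated by a Gaussian-type factor whose polynomial moments are finite, while the bounded region contributes trivially. The only cosmetic differences are that you finish the tail estimate with spherical coordinates whereas the paper compares directly with Gaussian moments, and your explicit three-piece decomposition is left implicit in the paper.
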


\begin{proof}
See Appendix \ref{proof:finite-moment}.  
\end{proof}

Lemma \ref{lemma:finite-moment} implies that the integral of any polynomials multiplied by $\exp\left( - \iota^{(\lambda)}_C \right)$, where $\iota^{(\lambda)}_C=\dfrac{\left\| x - \Pi_C(x) \right\|_2^2}{2\lambda}$, is finite. To prove that $ p_\lambda(x| y) $ and $\pi_\lambda$ are well-defined,  besides Lemma \ref{lemma:finite-moment}, we need an assumption about the boundedness of the prior and the likelihood.
\begin{assumption}\label{Assumption1}
The distribution learned by NF is bounded, i.e., $\underset{x \in \mathbb{R}^d}{\sup} q_\theta(x)<+\infty $. Moreover, for any $y \in \mathbb{R}^m$, $ \underset{x \in \mathbb{R}^d}{\sup} p(y | x)<+\infty$ and $p(y | \cdot) \in \mathrm{C}^1\left(\mathbb{R}^d,(0,+\infty)\right)$.
\end{assumption}

Since $q_\theta(x)$ is a distribution induced by normalizing flow and $q_\theta(x)$ is continuous on $\mathbb{R}^d$, intuitively $  \underset{x \in \mathbb{R}^d}{\sup} q_\theta(x)$ is bounded and Assumption \ref{Assumption1} is easily satisfied.   To give rigorous proof,  we state the following proposition which  assumes a similar triangular network architecture as mentioned in Sec. \ref{sec:NF}.  
\begin{proposition}\label{prop:bounded_q}
Assume that the input distribution $q_z(z)$ to the normalizing flow network is the standard normal distribution. Assume that  $ T^{-1}(x) = G^{(k)}\circ \cdots \circ G^{(1)}(x)   $ is a composition of $k$  coupling layers and each of the layer $G^{(i)}: x^{(i)} \mapsto x^{(i+1)} $, $G^{(i)}:\mathbb{R}^{d}\mapsto \mathbb{R}^{d}$ is given by
\begin{equation}
    G^{(i)}_j(x^{(i)}_j, x^{(i)}_{<j}) = \varphi^{(i)}_j(x^{(i)}_{<j}) x^{(i)}_j + \eta^{(i)}_j(x^{(i)}_{<j}), \,\,j = 1,\cdots, d.
    \label{eq:struct_of_inv_map1}
\end{equation}
Denote $x^{(1)} = x$ and $x^{(k+1)} = z$. 
If  $\varphi^{(i)}_j$s are bounded, then $ \log q_\theta(x)$ is upper bounded on $\mathbb{R}^d$. 
\end{proposition}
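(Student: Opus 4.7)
The plan is to use the change-of-variables formula \eqref{NF:change of variable} to decompose $\log q_\theta(x)$ into two terms and then bound each uniformly in $x$.

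First, I would write
\begin{equation*}
\log q_\theta(x) = \log q_z(T^{-1}(x)) + \log \bigl|\det J_{T^{-1}}(x)\bigr|.
\end{equation*}
Since $q_z$ is the standard normal density, $q_z(z) \le (2\pi)^{-d/2}$ for all $z \in \mathbb{R}^d$, so the first term is bounded above by $-\tfrac{d}{2}\log(2\pi)$ independently of $x$. The task is then reduced to bounding $\log|\det J_{T^{-1}}(x)|$ from above.

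Next, I would analyze the Jacobian of a single coupling layer $G^{(i)}$. Because of the structure \eqref{eq:struct_of_inv_map1}, the entry $G^{(i)}_j$ depends only on $x^{(i)}_j$ and the preceding coordinates $x^{(i)}_{<j}$, which makes $J_{G^{(i)}}(x^{(i)})$ lower triangular with diagonal entries $\varphi^{(i)}_j(x^{(i)}_{<j})$ for $j=1,\ldots,d$. Hence
\begin{equation*}
\det J_{G^{(i)}}(x^{(i)}) = \prod_{j=1}^{d} \varphi^{(i)}_j(x^{(i)}_{<j}).
\end{equation*}
Applying the chain rule to $T^{-1} = G^{(k)} \circ \cdots \circ G^{(1)}$ yields
\begin{equation*}
\bigl|\det J_{T^{-1}}(x)\bigr| = \prod_{i=1}^{k}\prod_{j=1}^{d} \bigl|\varphi^{(i)}_j(x^{(i)}_{<j})\bigr|.
\end{equation*}

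Finally, I would invoke the hypothesis that each $\varphi^{(i)}_j$ is bounded: let $M^{(i)}_j := \sup_{u} |\varphi^{(i)}_j(u)| < +\infty$. Then $|\det J_{T^{-1}}(x)| \le \prod_{i,j} M^{(i)}_j$ uniformly in $x$, and combining the two bounds gives
\begin{equation*}
\log q_\theta(x) \le -\tfrac{d}{2}\log(2\pi) + \sum_{i=1}^{k}\sum_{j=1}^{d} \log M^{(i)}_j,
\end{equation*}
which is the desired conclusion. The argument has essentially no hard step; the only thing to be careful about is the triangular-Jacobian computation at each layer and the correct application of the chain rule across the $k$ compositions, so that the determinant factorizes over layers and coordinates as claimed. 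Note that no lower bound is needed, so degenerations where some $\varphi^{(i)}_j$ vanish cause no difficulty here.
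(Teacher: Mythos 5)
Your proposal is correct and follows essentially the same route as the paper's proof: split $\log q_\theta(x)$ via the change-of-variables formula, discard (or bound) the Gaussian term $-\tfrac{1}{2}\|T^{-1}(x)\|^2$, and bound $\log|\det J_{T^{-1}}(x)|$ by the triangular structure, which makes the determinant the product of the bounded diagonal factors $\varphi^{(i)}_j$. The only cosmetic difference is that you treat general $k$ directly through the chain-rule factorization, whereas the paper writes out the cases $k=1,2$ and argues the rest "without loss of generality."
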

\begin{proof}
See Appendix \ref{proof:bounded_q}.
\end{proof} 
Using Lemma \ref{lemma:finite-moment}, we can then prove that the normalizing constant in the expression for $p_\lambda(x | y )$ in \eqref{definition:posterior_true} is finite.
\begin{corollary}\label{cor:well-defined}
Suppose Assumption \ref{Assumption1} holds. Let $\lambda>0$. Then, for any convex and compact set $C$ and $\alpha>0$, we have
\begin{equation*}
\int_{\mathbb{R}^d} p(y | x) q^\alpha_\theta( x )  \exp\left( - \dfrac{\left\| x - \Pi_C(x) \right\|_2^2}{2\lambda}\right) \mathrm{d} x < +\infty.
\end{equation*}
Hence, $p_\lambda(x| y)$ in \eqref{definition:posterior_true} is well-defined.  
\end{corollary}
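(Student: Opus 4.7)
The plan is to combine the uniform boundedness granted by Assumption \ref{Assumption1} with the integrability statement of Lemma \ref{lemma:finite-moment} applied at $k=0$. Specifically, Assumption \ref{Assumption1} yields finite constants $M_y := \sup_{x \in \mathbb{R}^d} p(y\mid x) < +\infty$ and $M_q := \sup_{x \in \mathbb{R}^d} q_\theta(x) < +\infty$, so for any $\alpha > 0$ we have the pointwise bound $p(y\mid x)\, q_\theta^\alpha(x) \leqslant M_y M_q^\alpha$ on $\mathbb{R}^d$. Plugging this into the integral of interest gives
\begin{equation*}
\int_{\mathbb{R}^d} p(y\mid x)\, q_\theta^\alpha(x) \exp\!\left(-\frac{\|x - \Pi_C(x)\|_2^2}{2\lambda}\right) \mathrm{d} x
\;\leqslant\; M_y M_q^\alpha \int_{\mathbb{R}^d} \exp\!\left(-\frac{\|x - \Pi_C(x)\|_2^2}{2\lambda}\right) \mathrm{d} x,
\end{equation*}
and the latter integral is finite by Lemma \ref{lemma:finite-moment} (specialized to $k=0$, where $\|x\|^0 \equiv 1$).

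To conclude well-definedness of $p_\lambda(x\mid y)$ in \eqref{definition:posterior_true}, I would additionally verify that the normalizing constant is strictly positive. This is immediate because $p(y\mid \cdot) \in \mathrm{C}^1(\mathbb{R}^d,(0,+\infty))$ by Assumption \ref{Assumption1} (so $p(y\mid x) > 0$ everywhere), $q_\theta^\alpha(x) > 0$ on $\mathbb{R}^d$ as a strictly positive continuous density, and $\exp(-\iota_C^{(\lambda)}(x)) > 0$ on $\mathbb{R}^d$; hence the integrand is strictly positive on any open set (e.g., the interior of $C$, where $\iota_C^{(\lambda)} = 0$), forcing the normalizing constant to lie in $(0,+\infty)$. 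Consequently $p_\lambda(\cdot\mid y)$ is a bona fide probability density and $\pi_\lambda$ is a well-defined probability measure.

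There is essentially no serious obstacle here: the argument is a one-line domination followed by the invocation of Lemma \ref{lemma:finite-moment}. The only subtlety worth flagging is the strict positivity of the normalizer, which must be checked to legitimately divide by it in the definition of $p_\lambda(x\mid y)$; both bounds (upper finiteness and lower positivity) rely in an essential way on the hypotheses of Assumption \ref{Assumption1}, so no additional structure on $q_\theta$ beyond what Proposition \ref{prop:bounded_q} supplies is required.
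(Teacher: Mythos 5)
Your proof is correct and follows essentially the same route as the paper: dominate $p(y\mid x)\,q_\theta^\alpha(x)$ by a constant via Assumption \ref{Assumption1} and invoke Lemma \ref{lemma:finite-moment} with $k=0$. The extra check of strict positivity of the normalizing constant is a reasonable addition the paper leaves implicit, but it does not change the argument.
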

\begin{proof}
Letting $k = 0$ in Lemma \ref{lemma:finite-moment} and using Assumption \ref{Assumption1}, we conclude the proof.
\end{proof}

\noindent \textbf{Remark}: Although $\int_{\mathbb{R}^d} q_\theta(x) \mathrm{d} x = 1$,  $\int_{\mathbb{R}^d} q_\theta^\alpha(x) \mathrm{d} x $ may not be finite in rare cases. This depends on how heavy the tail of $q_\theta(x)$ is. Corollary \ref{cor:well-defined} shows that multiplying $q_\theta^\alpha(x)$ with $\exp\left(-\iota^{(\lambda)}_C(x)\right)$ always leads to a finite integral, regardless of the tail behavior of $q_\theta(x)$. 

Now, we establish the well-posedness of the posterior measure $\pi_\lambda$ in the following proposition. Note that the local Lipschitz stability of posterior distribution in the observation has been studied in \cite{sprungk2020local, latz2020well} and applied to posterior sampling with PnP prior\cite{laumont2022bayesian} and generative models in \cite{altekruger2023conditional}. Apart from the considered $\iota^{(\lambda)}_C(\tilde{x})$, Proposition  \ref{prop:well-posedness} and Proposition SM5.3 in \cite{SupplementaryMaterials} are based on similar ideas.

\begin{proposition}
\label{prop:well-posedness}
Suppose Assumption \ref{Assumption1} holds and that there exist continuous functions $\Phi_1: \mathbb{R}^d \rightarrow[0,+\infty)$ and $\Phi_2: \mathbb{R}^m \rightarrow[0,+\infty)$ such that for any $x \in \mathbb{R}^d$ and $y_1, y_2 \in \mathbb{R}^m$, the following are satisfied:
\begin{align*}
&\big|\log \left(p\left(y_1 | x\right)\right)-\log \left(p\left(y_2 | x\right)\right)\big|  \leqslant\left(\Phi_1(x)+\Phi_2\left(y_1\right)+\Phi_2\left(y_2\right)\right)\left\|y_1-y_2\right\|,\\
&\text{and\,\,}
 \int_{\mathbb{R}^d}\left(1+\Phi_1(\tilde{x})\right) \exp \left[c_0 \,\Phi_1(\tilde{x})   - \iota^{(\lambda)}_C(\tilde{x})   \right]  \mathrm{d} \tilde{x} < +\infty,
\end{align*}
for all $c_0 > 0$.  
Then, $y \mapsto \pi_{\lambda, y} $ defined in \eqref{definition:posterior_measure}) is locally Lipschitz w.r.t. the total-variation (TV) norm $\|\cdot\|_\mathrm{TV}$, i.e., for any compact set ${K}$, there exists $M_{{K}} \geqslant 0$ such that for any $y_1, y_2 \in {K},\left\|   \pi_{\lambda, y_1} - \pi_{\lambda, y_2} \right\|_{\mathrm{TV}} \leqslant M_{{K}}\left\|y_1-y_2\right\|$.
\end{proposition}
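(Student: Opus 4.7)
The plan is to bound $\|\pi_{\lambda,y_1}-\pi_{\lambda,y_2}\|_{\mathrm{TV}} = \tfrac12\int |p_\lambda(x|y_1)-p_\lambda(x|y_2)|\,dx$ by exploiting the ratio structure $p_\lambda(x|y) = p(y|x)\rho(x)/Z(y)$, where $\rho(x) := q_\theta^\alpha(x)\exp(-\iota^{(\lambda)}_C(x))$ and $Z(y) := \int p(y|\tilde x)\rho(\tilde x)\,d\tilde x$. Writing
\[
p_\lambda(x|y_1) - p_\lambda(x|y_2) \;=\; \frac{[p(y_1|x) - p(y_2|x)]\rho(x)}{Z(y_1)} \;+\; \frac{p(y_2|x)\rho(x)}{Z(y_2)}\cdot\frac{Z(y_2) - Z(y_1)}{Z(y_1)},
\]
noting that the second term integrates to $|Z(y_2) - Z(y_1)|/Z(y_1)$ and that $|Z(y_1) - Z(y_2)| \le \int |p(y_1|x) - p(y_2|x)|\rho(x)\,dx$, the triangle inequality yields
\[
\|\pi_{\lambda,y_1}-\pi_{\lambda,y_2}\|_{\mathrm{TV}} \;\le\; \frac{1}{Z(y_1)}\int |p(y_1|x)-p(y_2|x)|\,\rho(x)\,dx.
\]

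Next I would bound the integrand using $|e^a - e^b| \le \max(e^a,e^b)|a-b|$ combined with the Lipschitz hypothesis on $\log p(\cdot|x)$, giving $|p(y_1|x)-p(y_2|x)| \le \max(p(y_1|x),p(y_2|x))(\Phi_1(x)+\Phi_2(y_1)+\Phi_2(y_2))\|y_1-y_2\|$. To tame $\max(p(y_1|x),p(y_2|x))$, fix a reference point $y^\star\in K$ and apply the Lipschitz bound once more: on the compact $K$, continuity of $\Phi_2$ yields $\sup_{y\in K}\Phi_2(y) \le M_\Phi$ and the diameter $D_K := \sup_{y\in K}\|y-y^\star\|$ is finite, whence
\[
p(y|x) \;\le\; e^{2M_\Phi D_K}\,p(y^\star|x)\,\exp\!\bigl(D_K\Phi_1(x)\bigr) \qquad \text{for all } y\in K.
\]
Since $p(y^\star|\cdot)$ is bounded by Assumption~\ref{Assumption1} and $\rho(x) \le (\sup q_\theta)^\alpha \exp(-\iota^{(\lambda)}_C(x))$ by Assumption~\ref{Assumption1} together with Proposition~\ref{prop:bounded_q}, the integral reduces (up to constants) to $\int (\Phi_1(x)+2M_\Phi)\exp(D_K\Phi_1(x) - \iota^{(\lambda)}_C(x))\,dx$, which is finite by the integrability hypothesis applied with $c_0 = D_K$ (using $\Phi_1+2M_\Phi \le (1+2M_\Phi)(1+\Phi_1)$). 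This produces a constant $C_K$ with $\int|p(y_1|x)-p(y_2|x)|\rho(x)\,dx \le C_K\|y_1-y_2\|$.

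The remaining ingredient is a uniform lower bound $\inf_{y\in K}Z(y) > 0$. Positivity is immediate from $p(y|\cdot)>0$ and $\rho > 0$, so one only needs continuity of $y\mapsto Z(y)$ on $K$; this follows from dominated convergence using the envelope $p(y|x)\rho(x) \le e^{2M_\Phi D_K}p(y^\star|x)\exp(D_K\Phi_1(x))\rho(x)$, which is integrable by the very same hypothesis. Compactness of $K$ then gives $Z_K := \inf_{y\in K}Z(y) > 0$, and one concludes with $M_K := C_K/Z_K$.

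The principal obstacle is less a single deep estimate than the bookkeeping needed to route every exponential factor that arises through the single integrability hypothesis. What makes the argument work cleanly is the quantifier ``for all $c_0 > 0$'' in that hypothesis: it permits choosing $c_0 = D_K$ locally, and the same dominating function $p(y^\star|x)\exp(D_K\Phi_1(x))\rho(x)$ then serves both for the Lipschitz estimate on the numerators and for the continuity of $Z$ via dominated convergence.
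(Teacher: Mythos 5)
Your argument is correct, but it takes a different route from the paper. The paper's proof is a two-line reduction: it observes that, because $q_\theta$ is bounded (Assumption \ref{Assumption1}), the stated integrability hypothesis implies $\int_{\mathbb{R}^d}(1+\Phi_1(\tilde x))\exp[c_0\Phi_1(\tilde x)-\iota^{(\lambda)}_C(\tilde x)]\,q_\theta^\alpha(\tilde x)\,\mathrm{d}\tilde x<+\infty$, and then invokes Proposition 2.3 of \cite{laumont2022bayesian} (the analogous stability result for PnP-ULA posteriors, itself in the spirit of \cite{sprungk2020local}), with the prior $p^\alpha_\varepsilon$ there replaced by $q_\theta^\alpha\exp(-\iota^{(\lambda)}_C)$. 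You instead re-derive that external result from scratch: the decomposition of $p_\lambda(\cdot|y_1)-p_\lambda(\cdot|y_2)$ into a numerator difference plus a normalizing-constant difference, the bound $|e^a-e^b|\leqslant\max(e^a,e^b)|a-b|$ combined with the log-Lipschitz hypothesis, the domination of $p(y|x)$ for $y\in K$ by $e^{2M_\Phi D_K}p(y^\star|x)\exp(D_K\Phi_1(x))$ via a reference point $y^\star$, integrability through the hypothesis with $c_0=D_K$, and the uniform lower bound $\inf_{y\in K}Z(y)>0$ (which, incidentally, you could get even without dominated convergence, since the same two-sided log-Lipschitz bound gives $Z(y)\geqslant e^{-2M_\Phi D_K}\int p(y^\star|x)e^{-D_K\Phi_1(x)}\rho(x)\,\mathrm{d}x>0$ directly). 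All steps check out; the only cosmetic points are that citing Proposition \ref{prop:bounded_q} is redundant (boundedness of $q_\theta$ is already part of Assumption \ref{Assumption1}) and that your factor-$\tfrac12$ convention for the TV norm differs from the paper's $V$-norm convention with $V=1$, which only changes the constant $M_K$ by a factor of two. What your route buys is a self-contained proof that makes explicit exactly where each hypothesis (continuity and compactness for $\Phi_2$, the quantifier ``for all $c_0>0$'', boundedness of $q_\theta$ and of $p(y^\star|\cdot)$) enters; what the paper's route buys is brevity, at the cost of outsourcing the mechanism to the cited result.
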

\begin{proof}
See Appendix \ref{proof:well-posedness}.
\end{proof}
For Gaussian likelihood $p(y|x)$, the conditions in Proposition \ref{prop:well-posedness} are satisfied when $\Phi_1(x) = c_1 \| x \|_2 $ and $\Phi_2(y) = c_2\| y \|_2 $ with positive constants $c_1$ and $c_2$.

\subsection{Convergence of NF-ULA}\label{sec:nonasymptotic-NF-ULA}
Most of the existing works on ULA for non-convex potentials \cite{durmus2017nonasymptotic, majka2020nonasymptotic, cheng2018sharp, erdogdu2021convergence, laumont2022bayesian} assume Lipschitz-continuity of the log gradient of the target density.   If the drift term  $ \nabla \log p_\lambda(x| y) $ is not Lipschitz, from \cite{ikeda2014stochastic, karatzas1991brownian}, it cannot generally be guaranteed that the SDE \eqref{eq:LSDE_posterior} will have a unique strong solution. This is why one must investigate the Lipschitz continuity of $ \nabla \log p_\lambda(x| y) $ before studying the convergence of NF-ULA. First, we make an assumption about the Lipschitz-continuity of $\nabla \log (p(y| \cdot))$:  




\begin{assumption}\label{Assumption2}
$\nabla \log (p(y | x))$ is $\mathrm{L}_y$-Lipschitz continuous in $x$, where $\mathrm{L}_y>0$ is a constant.
\end{assumption}

Note that Assumption \ref{Assumption2} is generally satisfied for common imaging inverse problems. One example is the popular Gaussian likelihood where $ p(y| x) \propto \exp\left(- {\left\| y - Ax \right\|_2^2} / (2\sigma^2)  \right) $, for which $\mathrm{L}_y= \| A^\top A  \| / \sigma^2 $. 
\begin{lemma}
\label{lemma:Lipschitz}
Under Assumption \ref{Assumption2}, $ \nabla \log p_{\lambda}(x | y)$ is Lipschitz continuous if and only if $ \nabla \log q_\theta(x) $ is Lipschitz continuous.
\end{lemma}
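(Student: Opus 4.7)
The plan is to differentiate the explicit formula \eqref{definition:posterior_true} for $p_\lambda(x|y)$ and observe that, up to terms that are automatically Lipschitz under Assumption~\ref{Assumption2}, the gradient $\nabla_x \log p_\lambda(x|y)$ differs from $\alpha\,\nabla\log q_\theta(x)$ by a globally Lipschitz vector field. Since $\alpha>0$, this will immediately give both directions of the ``iff''.

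Concretely, I would first take logarithms in \eqref{definition:posterior_true} and differentiate in $x$; the normalizing denominator depends only on $y$ and so drops out, leaving
\begin{equation*}
\nabla_x \log p_\lambda(x|y) \;=\; \nabla_x \log p(y|x) \;+\; \alpha\,\nabla \log q_\theta(x) \;-\; \nabla \iota^{(\lambda)}_C(x).
\end{equation*}
As recalled just after Algorithm~\ref{NF-ULA}, $\nabla \iota^{(\lambda)}_C(x) = (x - \Pi_C(x))/\lambda$; being the gradient of the Moreau envelope of a proper convex lower semi-continuous function, this map is $(1/\lambda)$-Lipschitz by standard Moreau envelope theory (Baillon--Haddad, see \cite{bauschke2011convex}). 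Together with Assumption~\ref{Assumption2}, the map $\nabla_x \log p(y|\cdot) - \nabla \iota^{(\lambda)}_C$ is therefore $(\mathrm{L}_y + 1/\lambda)$-Lipschitz on $\mathbb{R}^d$, irrespective of $q_\theta$.

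The equivalence then drops out by isolating $\alpha\,\nabla\log q_\theta$ on one side of the identity above. If $\nabla \log q_\theta$ is $\mathrm{L}$-Lipschitz, the triangle inequality yields that $\nabla_x \log p_\lambda(\cdot|y)$ is $(\mathrm{L}_y + \alpha \mathrm{L} + 1/\lambda)$-Lipschitz, which incidentally recovers the step-size bound displayed in Algorithm~\ref{NF-ULA}. Conversely, if $\nabla_x \log p_\lambda(\cdot|y)$ is Lipschitz, subtracting the two known-Lipschitz terms on the right and dividing by $\alpha>0$ shows that $\nabla \log q_\theta$ is Lipschitz as well. I do not anticipate a serious obstacle; the only step deserving explicit care is the Lipschitz constant of $\nabla \iota^{(\lambda)}_C$, which should simply be cited from \cite{bauschke2011convex}, and the observation that the $y$-dependent normalizer in \eqref{definition:posterior_true} contributes nothing to $\nabla_x \log p_\lambda(\cdot|y)$.
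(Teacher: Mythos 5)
Your proposal is correct and follows essentially the same route as the paper's proof: decompose $\nabla\log p_\lambda(x|y)$ into $\nabla\log p(y|x)+\alpha\nabla\log q_\theta(x)+(\Pi_C(x)-x)/\lambda$, note that the likelihood term is $\mathrm{L}_y$-Lipschitz by Assumption~\ref{Assumption2} and the projection term is $(1/\lambda)$-Lipschitz (the paper derives this from firm non-expansiveness of $\operatorname{Id}-\operatorname{Prox}_{\iota_C}$, Proposition~12.28 of \cite{bauschke2011convex}, rather than Baillon--Haddad, but the fact is the same), and conclude both directions since $\alpha>0$.
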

\begin{proof}
See Appendix \ref{proof:Lipschitz}.
\end{proof}
For convenience, we explicitly define the Lipschitz condition on the log gradient of $q_{\theta}(x)$ in the following assumption:
\begin{assumption}
\label{Assumption3}
There exist $\mathrm{L} \geqslant 0$ such that for any $  x_1, x_2 \in \mathbb{R}^d$, 
\begin{equation*}
\left\| \nabla \log q_\theta \left(x_1\right)-  \nabla \log q_\theta\left(x_2\right)\right\| \leqslant \mathrm{L}\left\|x_1-x_2\right\|.
\end{equation*}
\end{assumption}
It is therefore natural to ask how to enforce Assumption \ref{Assumption3} on the NF-based image prior $q_{\theta}(x)$ during training or by the network architecture. There have been some studies about the Lipschitz continuity of the invertible transform $T_{\theta}$ \cite{kobyzev2020normalizing, papamakarios2021normalizing, verine2021expressivity}, the Lipschitz constants of invertible neural networks by changing the latent distribution from a standard normal one to a Gaussian mixture model \cite{hagemann2021stabilizing},   the Lipschitz constants of other ``push-forward'' generative models   \cite{salmona2022can}. However,  to the best of our knowledge, there is no study about the Lipschitz continuity of $\nabla \log q_\theta(x) $ until now.  

While the equivalent conditions on $T_{\theta}$ for Assumption \ref{Assumption3} remain unknown, a sufficient condition on $T_{\theta}$ for Assumption \ref{Assumption3} can be obtained easily. For instance, when $T_{\theta}$ is a linear transform mapping a Gaussian distribution $q_z(z)$ to another Gaussian distribution $q_\theta(x)$, Assumption \ref{Assumption3} holds.  However, this may not be true if $T_{\theta}$ is nonlinear. 

As we have mentioned that Assumption \ref{Assumption3} is necessary for the convergence of NF-ULA, we derive a sufficient condition on $T_{\theta}$ for Assumption \ref{Assumption3} to hold. Intuitively, distributions with similar tail behaviors as Gaussian may have similar log gradients as Gaussian, if more conditions are satisfied.  We thus refer to some studies on the tails of normalizing flow priors \cite{jaini2020tails }. Theorem 4 in \cite{jaini2020tails} shows that affine coupling layer-based flows (e.g., NICE \cite{dinh2014nice}, Real-NVP\cite{dinh2016density}, MAF\cite{papamakarios2017masked}, IAF\cite{kingma2016improved}, and Glow \cite{kingma2018glow}) can only map the base normal distribution $q_z(z)$ to a light-tailed distribution $q_\theta(x)$. To be more specific, denote $G(x) = T^{-1}(x)$, where $G(x)$ is a triangular mapping and the Jacobian $J_G(x)$ is a triangular matrix function.  From \cite{jaini2020tails }, generally one can assume that for affine coupling layer-based flows, $G_j(x_j, x_{<j}) = \varphi_j(x_{<j}) x_j + \eta_j(x_{<j}) $, where $G_j$ is the $j$-th element of the vector $G(x)$ and $x_{<j}$ indicate $x_1, \cdots, x_{j - 1}$.  The condition they assume is heuristic: if $\varphi_j$ is bounded above and $\eta_j$ is Lipschitz, then $q_\theta(x)$ is light-tailed. In Glow, \cite{kingma2018glow} these conditions on $\varphi$ and $\eta$ are satisfied and even stricter. Therefore, we are able to prove the Lipschitz continuity of $\nabla \log q_\theta(x)$ in the proposition below, by enforcing a stricter condition on $\varphi$ and $\eta$.
\begin{proposition}\label{prop:Lipschitz}
Assume that the input distribution $q_z(z)$ to the normalizing flow network is the standard normal distribution, and that  $ T^{-1}(x) = G^{(k)}\circ \cdots \circ G^{(1)}(x)   $ is a composition of $k$  coupling layers, where each of the layers $G^{(i)}: x^{(i)} \rightarrow x^{(i+1)} $, $G^{(i)}:\mathbb{R}^{d}\rightarrow \mathbb{R}^{d}$ is given by
\begin{equation}
    G^{(i)}_j(x^{(i)}_j, x^{(i)}_{<j}) = \varphi^{(i)}_j(x^{(i)}_{<j}) x^{(i)}_j + \eta^{(i)}_j(x^{(i)}_{<j}), \,\,j = 1,\cdots, d.
    \label{eq:struct_of_inv_map}
\end{equation}
Denote $x^{(1)} = x$ and $x^{(k+1)} = z$. 
If  $\varphi^{(i)}_j$ is a constant function, $\eta^{(i)}_j$ is Lipschitz and for all $\displaystyle r < j, ~\frac{\partial \eta^{(i)}_j}{\partial x_r}$ is well-defined almost everywhere and piecewise constant on $\mathbb{R}$, then $ \nabla \log q_\theta(x) $ is  Lipschitz continuous on $\mathbb{R}^d$.
\end{proposition}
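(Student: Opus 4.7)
The plan is to decompose $\log q_\theta$ via the change-of-variables formula and then analyze the gradient piece by piece. First I would write
\[
\log q_\theta(x) = \log q_z(T^{-1}(x)) + \log\lvert\det J_{T^{-1}}(x)\rvert .
\]
Because every $\varphi^{(i)}_j$ is a constant, each layer Jacobian $J_{G^{(i)}}$ is lower triangular with constant diagonal entries $\{\varphi^{(i)}_j\}_j$, so $\det J_{G^{(i)}}$ is independent of $x^{(i)}$. By the chain rule, $\det J_{T^{-1}}(x)$ is therefore constant on $\mathbb{R}^d$ and the second term contributes nothing to the gradient. Using $q_z(z) \propto \exp(-\|z\|^2/2)$, this reduces the task to showing that
\[
\nabla \log q_\theta(x) = -\,J_{T^{-1}}(x)^\top T^{-1}(x)
\]
is Lipschitz in $x$.

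Next I would establish two auxiliary facts about the two factors. First, $T^{-1}$ is globally Lipschitz: each coupling layer $G^{(i)}$ is Lipschitz because its $j$-th coordinate is the sum of a linear function of $x_j^{(i)}$ with constant slope $\varphi^{(i)}_j$ and the Lipschitz function $\eta^{(i)}_j$ of $x^{(i)}_{<j}$; a composition of Lipschitz maps is Lipschitz, and in particular $\|T^{-1}(x)\| \le C(1 + \|x\|)$. Second, $J_{T^{-1}}(x)$ is uniformly bounded and piecewise constant in $x$: the off-diagonal entries of $J_{G^{(i)}}$ are the partials $\partial \eta^{(i)}_j/\partial x_r$, which are bounded by the Lipschitz constant of $\eta^{(i)}_j$ and piecewise constant in $x^{(i)}$ by hypothesis. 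Pulling these back through the continuous maps $x \mapsto x^{(i)}$ and matrix-multiplying the layer Jacobians, each entry of $J_{T^{-1}}(x)$ becomes a bounded piecewise constant function of $x \in \mathbb{R}^d$, with $\sup_x \|J_{T^{-1}}(x)\|$ controlled by the Lipschitz constants of the $\eta^{(i)}_j$ and the values of the $\varphi^{(i)}_j$.

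The main obstacle, and the step where the full strength of the piecewise-constant hypothesis must be used, is upgrading these facts to global Lipschitz continuity of $-J_{T^{-1}}(x)^\top T^{-1}(x)$. My plan is to partition $\mathbb{R}^d$ into the measurable regions $\{P_\ell\}$ on which $J_{T^{-1}}$ takes a single value $A_\ell$. On the interior of each $P_\ell$, the restriction of $T^{-1}$ is then affine, $T^{-1}\!\restriction_{P_\ell}(x) = A_\ell x + b_\ell$, so $\nabla \log q_\theta$ restricted to $P_\ell$ is affine with Hessian $-A_\ell^\top A_\ell$, whose operator norm is dominated by $\sup_\ell \|A_\ell\|^2 =: \mathrm{L}$. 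To promote this piecewise Lipschitz estimate to a global one, I must verify continuity of $\nabla \log q_\theta$ across the shared boundaries of adjacent pieces; the crux is showing that the apparent jump $(A_{\ell_1} - A_{\ell_2})^\top T^{-1}(x_0)$ at any boundary point $x_0 \in \overline{P}_{\ell_1} \cap \overline{P}_{\ell_2}$ vanishes, using the continuity of $T^{-1}$ together with the compatibility forced by the coupling structure and the piecewise-constant partials. Once continuity is in hand, combining it with the uniform Hessian bound on each open piece via Rademacher's theorem (or a direct mean-value argument along straight-line segments that cross finitely many boundaries) yields the global Lipschitz constant $\mathrm{L}$.
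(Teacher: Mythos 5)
Your reduction coincides with the paper's own: because every $\varphi^{(i)}_j$ is constant, the log-determinant term is constant, so $\nabla\log q_\theta(x) = -J_{T^{-1}}(x)^\top T^{-1}(x)$, and the paper likewise records that $T^{-1}$ is Lipschitz and that every entry of $J_{T^{-1}}(x)$ is a bounded, piecewise-constant function of $x$ (arguing layer by layer for $k=1,2$), before concluding in a single line that $(J_{T^{-1}})^\top T^{-1}$ is Lipschitz. Where you differ is in trying to justify that final inference via a piecewise-affine decomposition plus continuity of the gradient across the boundaries of the pieces, and that is precisely where your argument has a genuine gap: you identify "showing that the apparent jump $(A_{\ell_1}-A_{\ell_2})^\top T^{-1}(x_0)$ vanishes" as the crux and appeal to "compatibility forced by the coupling structure", but you never prove it, and it does not follow from the stated hypotheses. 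Take $d=2$, $k=1$, $G_1(x)=x_1$, $G_2(x)=x_2+\max(0,x_1)$: here $\varphi\equiv 1$ and $\eta_2(x_1)=\max(0,x_1)$ is Lipschitz with piecewise-constant derivative $\mathbf{1}_{x_1>0}$, so the hypotheses are met, yet the first component of $J_G(x)^\top G(x)$ equals $x_1+\bigl(x_2+\max(0,x_1)\bigr)\mathbf{1}_{x_1>0}$, which jumps by $x_2$ across the hyperplane $\{x_1=0\}$. The jump term is therefore generically nonzero, the gradient is not even continuous there, and no mean-value or Rademacher argument can glue the affine pieces together.

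In other words, your outline isolates the real difficulty that the paper's proof passes over — the inference "entries of $J_G$ bounded piecewise constant and $G$ Lipschitz, hence $(J_G)^\top G$ Lipschitz" is exactly the statement you are trying to substantiate, and a bounded piecewise-constant factor times an (unbounded) Lipschitz factor is not Lipschitz in general — but the route you choose to close it, namely proving that the boundary jumps vanish from the coupling structure alone, cannot succeed under the hypotheses as written. To complete a proof along your lines you would need additional structural input beyond "$\eta^{(i)}_j$ Lipschitz with piecewise-constant partials," for instance conditions forcing the relevant components of $T^{-1}$ to vanish on the discontinuity set of $J_{T^{-1}}$, or smoothness of the $\eta^{(i)}_j$ with bounded second derivatives, under which your Hessian bound would directly yield the Lipschitz constant. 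The remaining ingredients of your plan — constancy of the log-determinant, linear growth and global Lipschitz continuity of $T^{-1}$, uniform boundedness of $J_{T^{-1}}$, and the Hessian $-A_\ell^\top A_\ell$ on the interior of each piece — are correct and agree with the paper's computation.
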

\begin{proof}
See Appendix \ref{proof:prop_Lipschitz}. 
\end{proof}
The conditions on $\varphi, \eta$ in Proposition \ref{prop:Lipschitz} are satisfied in \textit{Glow} \cite{kingma2018glow} with    \textit{additive  coupling layers}  where  each $\eta$ is a five-layer sequential network with 2D convolutional layers (denoted as \texttt{Conv2d}) and \texttt{ReLU} activations:
\begin{equation*}
\eta(x) = \texttt{Conv2d}(\texttt{ReLu}(\texttt{Conv2d}(\texttt{ReLu}(\texttt{Conv2d}(x))))),
\end{equation*}
where $\texttt{ReLu}(x):= \max(0,x)$ (applied in an element-wise manner) and $\texttt{Conv2d}(x) :=K_{\mathrm{NF}}* x $ denotes a 2D convolution layer acting on $x$ with a kernel $K_{\mathrm{NF}}$. Further, $\varphi = 1$ is used in the additive coupling layer. Note that in Glow, there is an option of using an \textit{affine coupling layer} where $\varphi$ is the sigmoid function $\varphi(x) = 1 / (1 + e^{-x}) $ element-wise. This leads to a more powerful network and can generate better human face images \cite{kingma2018glow}, but $\nabla \log q_\theta(x)$ is not guaranteed to be Lipschitz anymore. This theoretical observation is corroborated by our experiments in Sec. \ref{sec:deblurring}, as we found that NF-ULA with affine coupling layer did not converge. The conditions on $\varphi$ and $\eta$ might be relaxed if $q_z(z)$ is not Gaussian, but this requires re-training the network since most of the popular normalizing flows accept standard Gaussian base distribution as input. We leave these studies on the Lipschitz-continuity of $\nabla \log q_\theta(x)$ for future work. 

In order to prove the convergence of NF-ULA, we need one final assumption.
\begin{assumption}
\label{Assumption4}
There exists $\mathrm{m}_y \in \mathbb{R}$ such that for all $x_1, x_2 \in \mathbb{R}^d$, we have
\begin{align*}
\left\langle\nabla \log p\left(y | x_2\right)-\nabla \log p\left(y | x_1\right), x_2-x_1\right\rangle 
\leqslant-\mathrm{m}_y\left\|x_2-x_1\right\|_2^2 .
\end{align*}
\end{assumption}

This condition is called the \textit{contractivity condition} of $ \nabla \log p(y| x) $ and is used to  prove the contractivity of the drift term $\nabla \log p_\lambda(x|y)$ at infinity (see proofs of Theorem \ref{theorem:contractive} in Appendix \ref{proof:contractive}). Note that the influence of the drift's contractivity condition has been studied in ULA for non-convex potentials \cite{cheng2018sharp, majka2020nonasymptotic, de2019convergence}.

If Assumption \ref{Assumption4} is satisfied with $\mathrm{m}_y>0$, then $x \mapsto  - \log p(y | x)$ is $\mathrm{m}_y$-strongly convex.  If Assumption \ref{Assumption2} is satisfied, then Assumption \ref{Assumption4} holds for $\mathrm{m}_y=-\mathrm{L}_y$. However, we are interested to find $\mathrm{m}_y>-\mathrm{L}_y$ while Assumption \ref{Assumption2} holds, since we will see in the proofs of
Theorem \ref{theorem:contractive} and Theorem \ref{theorem:nonasymptotic-bias} in Appendix \ref{proof:contractive} and \ref{proof:nonasymptotic-bias} that a larger $\mathrm{m}_y$ is beneficial to the convergence of NF-ULA.

In what follows, we introduce the associated stochastic kernel $\mathrm{R}_\delta: \mathbb{R}^d \times \mathcal{B}(\mathbb{R}^d)\rightarrow [0, 1] $ of the NF-ULA (\ref{NF-ULA-simplified}) and the drift $b_\lambda\in \mathrm{C}\left(\mathbb{R}^d, \mathbb{R}^d\right)$:  
\begin{equation}
\label{definition:stochastic-kernel}
\begin{aligned}
 \mathrm{R}_{\delta}(x, \mathrm{~A}) & = (2 \pi)^{-d / 2} \int_{\mathbb{R}^d} \mathbf{1}_{\mathrm{A}}\left(x+\delta b_\lambda(x)+\sqrt{2 \delta} z\right) \exp \left[-\|z\|^2 / 2\right] \mathrm{d} z,
\\
 b_\lambda (x) & = \nabla \log p_\lambda (x | y)  =  \nabla \log p\left(y | x\right)+ \alpha \nabla \log q_\theta( x ) + \dfrac{  \Pi_C\left(x\right)-x  }{\lambda},
\end{aligned}
\end{equation}
where $x\in \mathbb{R}^d $ and $\mathrm{~A}\in \mathcal{B}(\mathbb{R}^d) $.  Here $b_\lambda $ has the subscript $\lambda$ and is different from the $b$  defined in Sec. \ref{sec:notations} because of  $  (\Pi_C\left(x\right)-x)  /\lambda$.  Given $X_k$ in NF-ULA (\ref{NF-ULA-simplified}), $\mathrm{R}_{\delta}(X_k, \cdot)$ is actually a probability measure which defines the transition probability $p(X_{k+1} | X_k )$.

With all the previous four assumptions A \ref{Assumption1}, A \ref{Assumption2}, A \ref{Assumption3}, and A \ref{Assumption4} holding, we can prove that NF-ULA (Algorithm \ref{NF-ULA}) is convergent, or more precisely, the stochastic kernel $\mathrm{R}_\delta$ admits an unique invariant distribution $\pi_{\delta, \lambda}$.   We follow the proof in SM6.2 from \cite{SupplementaryMaterials} but our theorem and proof are slightly different, as we do not include the parameter $\varepsilon$ of PnP denoisers in the condition.  The first thing to prove is that $\mathrm{R}_\delta$ defines a contractive mapping. 


\begin{theorem}
\label{theorem:contractive}
Assume A \ref{Assumption1}, A \ref{Assumption2}, A \ref{Assumption3}, and A \ref{Assumption4}. Assume  $V(x) = 1 + \left\| x \right\|^2, x\in \mathbb{R}^d$. Let $\lambda, \alpha, C, \mathrm{L}_y, \mathrm{L}$ be the ones in NF-ULA (Algorithm \ref{NF-ULA}). Let $\mathrm{m}_y$ be the parameter in A \ref{Assumption4}. Let $\lambda>0  $, such that $2 \lambda\left(\mathrm{L}_y+ \alpha\mathrm{L} -\min (\mathrm{m}_y, 0)\right) \leqslant 1$ and let $\bar{\delta}=(1 / 6)\left(\mathrm{L}_y+ \alpha\mathrm{L} +1 / \lambda\right)^{-1}$. Then for any convex and compact $C$ with $0\in C$, there exist $A_{1 } \geqslant 0$ and $\rho_{1 } \in[0,1)$ such that for any $\delta \in(0, \bar{\delta}], x_1, x_2 \in \mathbb{R}^d$, and $k \in \mathbb{N}$ we have
\begin{equation*}
\begin{aligned}
&\left\|\boldsymbol{\delta}_{x_1} \mathrm{R}_{\delta}^k-\boldsymbol{\delta}_{x_2} \mathrm{R}_{ \delta}^k\right\|_V \leqslant A_{1 } \rho_{1 }^{k \delta}\left(V^2\left(x_1\right)+V^2\left(x_2\right)\right), \text{\,and\,}\\
&\mathbf{W}_1\left(\boldsymbol{\delta}_{x_1} \mathrm{R}_{ \delta}^k, \boldsymbol{\delta}_{x_2} \mathrm{R}_{ \delta}^k\right) \leqslant A_{1 } \rho_{1 }^{k \delta}\left\|x_1-x_2\right\|_2.
\end{aligned}
\end{equation*}
\end{theorem}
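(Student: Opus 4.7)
The plan is to fit the stochastic kernel $\mathrm{R}_\delta$ of~\eqref{definition:stochastic-kernel} into the Harris/Hairer--Mattingly framework for geometric ergodicity used in the supplementary material of~\cite{laumont2022bayesian}. Two ingredients are required: (i) a uniform-in-$\delta$ Foster--Lyapunov drift condition $\mathbf{D}_{\mathrm{d}}(W,\zeta_{\mathrm{d}},c)$ with Lyapunov function $W=V^2$, and (ii) a one-step minorization of $\mathrm{R}_\delta$ on sublevel sets of $W$. Together these yield the $V$-norm contraction with prefactor $V^2(x_1)+V^2(x_2)$, while the Wasserstein-$1$ estimate then follows by combining a synchronous coupling in the contractive-at-infinity region with the minorization in the center.

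The first step is a one-sided Lipschitz estimate for the drift $b_\lambda$. Decomposing
\[
\langle b_\lambda(x_1)-b_\lambda(x_2),\,x_1-x_2\rangle
\]
into its three summands, A\ref{Assumption4} controls the likelihood contribution by $-\mathrm{m}_y\|x_1-x_2\|^2$, A\ref{Assumption3} combined with Cauchy--Schwarz controls the log-flow contribution by $\alpha\mathrm{L}\|x_1-x_2\|^2$, and convexity of the Moreau envelope $\iota_C^{(\lambda)}$ (equivalently monotonicity and $\lambda$-cocoercivity of $\nabla\iota_C^{(\lambda)}$) makes the projection contribution non-positive. Specializing to $(x_1,x_2)=(x,0)$ and using boundedness of $\Pi_C$ (since $0\in C$ and $C$ is compact), I obtain
\[
\langle b_\lambda(x),x\rangle \leq -\bigl(1/\lambda+\mathrm{m}_y-\alpha\mathrm{L}\bigr)\|x\|^2 + c_1\bigl(1+\|x\|\bigr),
\]
in which the leading coefficient is strictly positive under the hypothesis $2\lambda(\mathrm{L}_y+\alpha\mathrm{L}-\min(\mathrm{m}_y,0))\leq 1$. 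Plugging this into the one-step expansion of $\mathbb{E}[V^2(X_{k+1})\mid X_k]$, using $\|b_\lambda(x)\|\leq c_2(1+\|x\|)$ together with Gaussian moment formulas, and exploiting the step-size restriction $\delta\leq \bar\delta=(1/6)(\mathrm{L}_y+\alpha\mathrm{L}+1/\lambda)^{-1}$ to absorb the $\delta^2\|b_\lambda\|^2$ term, yields $\mathrm{R}_\delta V^2(x)\leq \zeta_{\mathrm{d}} V^2(x)+c$.

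The minorization comes from continuity and local boundedness of $b_\lambda$ (via A\ref{Assumption1}--A\ref{Assumption3} and continuity of $\Pi_C$) together with the strict positivity of the Gaussian transition density on bounded sets: for any $R>0$ there exist $\varepsilon>0$ and a probability measure $\nu$ with $\mathrm{R}_\delta(x,\cdot)\geq \varepsilon\,\nu(\cdot)$ for every $x$ in $\{V^2\leq R\}$. The Harris-type theorem underlying Proposition~SM6.1 of~\cite{laumont2022bayesian} (itself built on Theorem~12.3.3 of~\cite{douc2018markov}) then promotes these two facts to the stated $V$-norm contraction $\|\boldsymbol{\delta}_{x_1}\mathrm{R}_\delta^k-\boldsymbol{\delta}_{x_2}\mathrm{R}_\delta^k\|_V\leq A_1\rho_1^{k\delta}(V^2(x_1)+V^2(x_2))$, where the $V^2$ prefactor is exactly the evaluation of the Lyapunov function at the initial Dirac masses. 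For the Wasserstein-$1$ assertion I would run a synchronous coupling: outside a sufficiently large ball the one-sided estimate from Step~1 forces $\mathbb{E}\|X_{k+1}^{(1)}-X_{k+1}^{(2)}\|^2\leq (1-c\delta)\|X_k^{(1)}-X_k^{(2)}\|^2$, while the drift condition guarantees return to the ball in finitely many steps and the minorization inside the ball allows one to stitch the segments together into a global geometric $\mathbf{W}_1$ contraction at the same rate.

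The main obstacle lies in Step~1. Because $-\log q_\theta$ is only assumed to have Lipschitz gradient (and is in general non-convex), $\alpha\nabla\log q_\theta$ contributes a \emph{positive} $\alpha\mathrm{L}\|x_1-x_2\|^2$ term to the monotonicity inequality, so the entire negative pull-back must come from the projection-minus-identity term, whose strength is governed by $1/\lambda$. Pinning down the precise threshold $2\lambda(\mathrm{L}_y+\alpha\mathrm{L}-\min(\mathrm{m}_y,0))\leq 1$ under which the global drift is strictly dissipative at infinity, and then propagating this quantitative information through the Gaussian-moment expansion and the Harris-style assembly, is where $A_1$ and $\rho_1$ acquire their dependence on $\lambda$, $\alpha$, $\mathrm{L}_y$, $\mathrm{L}$ and the diameter of $C$.
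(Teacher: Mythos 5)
There is a genuine gap, and it sits exactly where you flag the ``main obstacle'': your treatment of the projection term in the two-point estimate. You argue that monotonicity of $\nabla\iota_C^{(\lambda)}$ makes the contribution of $(\Pi_C-\mathrm{Id})/\lambda$ to $\langle b_\lambda(x_1)-b_\lambda(x_2),x_1-x_2\rangle$ merely non-positive, which leaves you with the coefficient $\alpha\mathrm{L}-\mathrm{m}_y$ --- in general strictly positive, since $\log q_\theta$ is non-convex and $\mathrm{m}_y$ may be negative. That is not enough. The paper's proof instead splits the term: the $-\mathrm{Id}/\lambda$ part contributes the full $-\|x_1-x_2\|^2/\lambda$, while the $\Pi_C/\lambda$ part is bounded by $R_C\|x_1-x_2\|/\lambda$ with $R_C=\sup\{\|u-v\|:u,v\in C\}$, because $\Pi_C$ takes values in the bounded set $C$. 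Combined with $2\lambda(\mathrm{L}_y+\alpha\mathrm{L}-\min(\mathrm{m}_y,0))\leqslant 1$ this gives $\langle b_\lambda(x_1)-b_\lambda(x_2),x_1-x_2\rangle\leqslant -\|x_1-x_2\|^2/(2\lambda)+R_C\|x_1-x_2\|/\lambda$, hence the contractivity-at-infinity condition $\langle b_\lambda(x_1)-b_\lambda(x_2),x_1-x_2\rangle\leqslant-\|x_1-x_2\|^2/(4\lambda)$ whenever $\|x_1-x_2\|\geqslant 4R_C$. This two-point condition (together with the global Lipschitz bound $\mathrm{L}_y+\alpha\mathrm{L}+1/\lambda$ on $b_\lambda$ and the Foster--Lyapunov condition you do derive) is precisely the input to Theorem 8 and Corollary 2 of \cite{de2019convergence}, which deliver both the TV/$V$-norm and the $\mathbf{W}_1$ bounds with the rate $\rho_1^{k\delta}$; the $V$-norm statement then follows via $\|\nu_1-\nu_2\|_V\leqslant\|\nu_1-\nu_2\|_{\mathrm{TV}}^{1/2}(\nu_1[V^2]+\nu_2[V^2])^{1/2}$. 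You recover the $1/\lambda$ gain only in the specialization $x_2=0$, which is a confinement (Lyapunov) estimate in $\|x\|$, not the two-point estimate in $\|x_1-x_2\|$ that the argument actually needs; your synchronous-coupling claim that ``outside a sufficiently large ball'' one has $\mathbb{E}\|X^{(1)}_{k+1}-X^{(2)}_{k+1}\|^2\leqslant(1-c\delta)\|X^{(1)}_k-X^{(2)}_k\|^2$ is therefore unsupported as written.

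A second, related problem is the rate's uniformity in $\delta$. A one-step minorization of $\mathrm{R}_\delta$ on sublevel sets of $W=V^2$ has a small-set constant that degenerates (exponentially) as $\delta\to 0$, because the Gaussian innovation has variance $2\delta$; a plain Harris/Hairer--Mattingly assembly built on such a one-step minorization yields a per-step rate that does not translate into a bound of the form $A_1\rho_1^{k\delta}$ with $\rho_1$ independent of $\delta\in(0,\bar\delta]$. This is exactly why the paper routes the argument through the coupling results of \cite{de2019convergence}, whose hypotheses are the contractivity at infinity and Lipschitz continuity of the drift discussed above. So even granting your drift condition, the minorization-based step as described would not give the stated conclusion; repairing the proof means establishing the two-point contractivity estimate and then invoking (or reproving) those $\delta$-uniform coupling bounds, rather than a generic Harris theorem.
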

\begin{proof}
See Appendix \ref{proof:contractive}.
\end{proof}

In the above theorem the Dirac measures $\boldsymbol{\delta}_{x_1}, \boldsymbol{\delta}_{x_2}$ can be extended to any measures $\nu_1, \nu_2 \in \mathscr{P}_1\left(\mathbb{R}^d\right)$: 
\begin{equation}
\label{generalized-contractive}
\begin{aligned}
\left\|\nu_1 \mathrm{R}_{ \delta}^k-\nu_2 \mathrm{R}_{ \delta}^k\right\|_V 
& \leqslant A_{1   } \rho_{1 }^{k \delta}\left(\int_{\mathbb{R}^d} V^2(\tilde{x}) \mathrm{d} \nu_1(\tilde{x})+\int_{\mathbb{R}^d} V^2(\tilde{x}) \mathrm{d} \nu_2(\tilde{x})\right), 
\\
\mathbf{W}_1\left(\nu_1 \mathrm{R}_{ \delta}^k, \nu_2 \mathrm{R}_{ \delta}^k\right) 
& \leqslant A_{1   } \rho_{1  }^{k \delta}\left(\int_{\mathbb{R}^d}\|\tilde{x}\| \mathrm{d} \nu_1(\tilde{x})+\int_{\mathbb{R}^d}\|\tilde{x}\| \mathrm{d} \nu_2(\tilde{x})\right) .    
\end{aligned}    
\end{equation}
From Theorem 6.18 in \cite{villani2009optimal}, $\left(\mathscr{P}_1\left(\mathbb{R}^d\right), \mathbf{W}_1\right)$ is a complete metric space. For any measure $\nu \in \mathscr{P}_1\left(\mathbb{R}^d\right)$, define $\mathrm{f}: \mathscr{P}_1\left(\mathbb{R}^d\right) \rightarrow \mathscr{P}_1\left(\mathbb{R}^d\right)$ as   $\mathrm{f}(\nu)=\nu \mathrm{R}_{\varepsilon, \delta}$. Then for any $\delta \in(0, \bar{\delta}]$, there exists large enough $\mathrm{m}_\delta \in \mathbb{N}^*$ such that $\mathrm{f}^{\mathrm{m}_\delta}$ is a  contractive mapping. Therefore we can apply the Picard fixed point theorem and we obtain that $\mathrm{R}_{ \delta}$ admits an unique invariant probability measure $\pi_{\delta, \lambda} \in \mathscr{P}_1\left(\mathbb{R}^d\right)$. Since $\pi_{\delta, \lambda}$ is subject to bias comparing with the solution of the SDE $\mathrm{d} X_t=  b_\lambda( X_t )\mathrm{d}t + \sqrt{2} \mathrm{~d} B_t$, in the Theorem below, we follow the proof in SM6.3 from  \cite{SupplementaryMaterials} and give a nonasymptotic bias analysis:

\begin{theorem}\label{theorem:nonasymptotic-bias}
Assume A \ref{Assumption1}, A \ref{Assumption2}, A \ref{Assumption3}, A \ref{Assumption4}. Assume  $V(x) = 1 + \left\| x \right\|^2, x\in \mathbb{R}^d$. Let $\lambda, \alpha, C, \mathrm{L}_y, \mathrm{L}$ be the ones in NF-ULA (Algorithm \ref{NF-ULA}). Let $\mathrm{m}_y$ be the parameter in A \ref{Assumption4}. Let $\lambda>0$ such that $2 \lambda\left(\mathrm{L}_y+ \alpha \mathrm{L} -\min (\mathrm{m}_y, 0)\right) \leqslant 1$ and let $\bar{\delta}=(1 / 6)\left(\mathrm{L}_y+\alpha\mathrm{L} + 1 / \lambda\right)^{-1}$. Then for any $\delta \in(0, \bar{\delta}]$ and $\mathrm{C}$ convex and compact, $ \mathrm{R}_{  \delta  }$ admits an invariant probability measure $\pi_{ \delta, \lambda}$. In addition, there exists $B_1, B_2, B_3 \geqslant 0$, $\tilde{\rho}_1\in [0, 1)$ such that for any $\delta \in(0, \bar{\delta}]$, $k\in \mathbb{N}^*$, 
\begin{equation*}
\left\| \boldsymbol{\delta}_x \mathrm{R}_{\delta}^{k} -  \pi_{\lambda}\right\|_V 
 \leqslant B_1 \tilde{\rho}_{1}^{k\delta} V^2(x) + B_2 V(x)  \sqrt{\delta^2 k \left(d + \dfrac{B_3 \delta}{3}\right)    } . 
\end{equation*}

\end{theorem}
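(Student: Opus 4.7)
The plan is to follow the Durmus--Moulines style non-asymptotic analysis for Euler--Maruyama discretizations, as in SM6.3 of \cite{SupplementaryMaterials}. I would begin with the triangle inequality
\begin{equation*}
\left\| \boldsymbol{\delta}_x \mathrm{R}_{\delta}^{k} - \pi_{\lambda}\right\|_V \leq \left\| \boldsymbol{\delta}_x \mathrm{R}_{\delta}^{k} - \boldsymbol{\delta}_x P_{k\delta}\right\|_V + \left\| \boldsymbol{\delta}_x P_{k\delta} - \pi_{\lambda}\right\|_V,
\end{equation*}
where $(P_t)_{t \geq 0}$ denotes the Markov semigroup of the continuous SDE $\mathrm{d}\mathbf{X}_t = b_\lambda(\mathbf{X}_t)\,\mathrm{d}t + \sqrt{2}\,\mathrm{d}\mathbf{B}_t$, with $b_\lambda$ as in \eqref{definition:stochastic-kernel}. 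A preliminary step is to verify that $\pi_\lambda$ is indeed invariant for $P_t$: since $b_\lambda = \nabla \log p_\lambda(\cdot \mid y)$, the corresponding Fokker--Planck equation admits $p_\lambda$ as stationary density. Existence of an invariant $\pi_{\delta,\lambda}$ for $\mathrm{R}_\delta$ already follows from the Picard fixed point argument outlined after Theorem \ref{theorem:contractive}.

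For the continuous-time (second) term, I would establish the drift condition $\mathbf{D}_{\mathrm{c}}(V, \zeta, \beta)$ with $V(x) = 1 + \|x\|^2$ by computing $\mathcal{A} V(x) = 2\langle x, b_\lambda(x)\rangle + 2d$ and exploiting Assumption \ref{Assumption4} together with the $1/\lambda$-strong confinement coming from the term $(\Pi_C(x) - x)/\lambda$ outside a compact set, exactly as in the continuous counterpart of the argument underlying Theorem \ref{theorem:contractive}. The hypothesis $2\lambda(\mathrm{L}_y + \alpha\mathrm{L} - \min(\mathrm{m}_y, 0)) \leq 1$ guarantees that the full drift is dissipative at infinity. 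Standard Harris ergodicity (e.g.\ Theorem 12.3.3 in \cite{douc2018markov}, together with Feller continuity of $P_t$) then yields $\|\boldsymbol{\delta}_x P_{k\delta} - \pi_\lambda\|_V \leq C \tilde{\rho}_1^{k\delta} V(x)$. Absorbing $V(x) \leq V^2(x)$ produces the $B_1 \tilde{\rho}_1^{k\delta} V^2(x)$ contribution in the stated bound.

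For the discretization-error (first) term, I would use a synchronous coupling: define $(X_k)_{k \in \mathbb{N}}$ by NF-ULA \eqref{NF-ULA-simplified} with $X_0 = x$ and let $(\mathbf{X}_t)_{t \geq 0}$ solve the SDE started at $\mathbf{X}_0 = x$ driven by the Brownian motion that interpolates the noises $(Z_{k+1})$. Writing $E_k := X_k - \mathbf{X}_{k\delta}$, the per-step decomposition
\begin{equation*}
E_{k+1} - E_k = \delta\, b_\lambda(X_k) - \int_{k\delta}^{(k+1)\delta} b_\lambda(\mathbf{X}_s)\,\mathrm{d} s
\end{equation*}
combined with global Lipschitz continuity of $b_\lambda$ with constant $\mathrm{L}_y + \alpha \mathrm{L} + 1/\lambda$ (from Assumption \ref{Assumption2}, Assumption \ref{Assumption3}, and non-expansiveness of $\Pi_C$), and the short-interval estimate $\mathbb{E}\|\mathbf{X}_s - \mathbf{X}_{k\delta}\|^2 \leq c_1(d + c_2\delta)(s - k\delta)$ (obtained from It\^o's isometry plus uniform $V$-moment control of $(\mathbf{X}_t)$ via $\mathbf{D}_\mathrm{c}$), yields by Gr\"onwall iteration $\mathbb{E}\|E_k\|^2 \leq c\,\delta^2 k (d + B_3 \delta / 3)$. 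Converting this $\mathbf{W}_2$-type coupling bound into the $V$-norm produces the factor $V(x)$ together with the square-root on the $\delta^2 k(d + B_3\delta/3)$ term, by restricting to test functions $f$ with $\|f\|_V \leq 1$, using that such $f$ are dominated by $V$ along the coupled trajectories and applying Cauchy--Schwarz with the uniform $V$-moment bound on $(X_k)$ that is implicit in the discrete drift condition $\mathbf{D}_\mathrm{d}$ of Theorem \ref{theorem:contractive}.

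The main obstacle is bookkeeping uniform moment bounds along the coupling: one must control $\mathbb{E}[V^2(X_k)]$ and $\mathbb{E}[V^2(\mathbf{X}_s)]$ uniformly in $k$ and in $s \in [k\delta, (k+1)\delta]$, without letting the constants explode with $k$, while simultaneously iterating Gr\"onwall to keep the dimension $d$ only additively under the square root. A second delicate point is the passage from the $\mathbf{W}_2$ coupling estimate to the $V$-total variation norm: $V$-bounded test functions are not Lipschitz in general, so one must either approximate by $V$-Lipschitz surrogates or split the integral on a growing compact set whose complement is controlled by the polynomial tail of $V^2$ against $\pi_\lambda$ and $\delta_x \mathrm{R}_\delta^k$, tracking constants so the final bound takes the compact form stated.
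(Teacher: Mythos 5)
Your overall architecture matches the paper's: triangle inequality splitting $\left\| \boldsymbol{\delta}_x \mathrm{R}_{\delta}^{k} - \pi_{\lambda}\right\|_V$ into a discretization error against the semigroup $(\mathrm{P}_t)_t$ of the SDE with drift $b_\lambda$ plus a continuous-time convergence term, identification of the SDE's invariant law with $\pi_\lambda$, geometric $V$-norm convergence of $\mathrm{P}_t$, and an increment estimate $\mathbb{E}\|\mathbf{X}_s-\mathbf{X}_{k\delta}\|^2 \lesssim (d+c\delta)(s-k\delta)$ via It\^o isometry and uniform moment bounds; all of this is essentially what the paper does (it obtains the semigroup contraction from Theorem 21 and Corollary 22 of \cite{de2019convergence} rather than from a Harris drift-plus-minorization argument, which is a cosmetic difference, though note you would still need a minorization/small-set condition, not just $\mathbf{D}_{\mathrm{c}}$ and Feller continuity, to get geometric $V$-ergodicity).

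The genuine gap is in the discretization-error term. Your synchronous coupling controls $\mathbb{E}\|X_k-\mathbf{X}_{k\delta}\|^2$, i.e.\ a Wasserstein-type distance, but the theorem asserts a bound in the $V$-weighted total-variation norm, and TV-type norms are not dominated by coupling moment bounds: for test functions $f$ with $\|f\|_V\leqslant 1$ one only knows $|f|\leqslant V$, not any Lipschitz regularity, so $|\mathbb{E}[f(X_k)]-\mathbb{E}[f(\mathbf{X}_{k\delta})]|$ cannot be estimated by Cauchy--Schwarz against $\mathbb{E}\|X_k-\mathbf{X}_{k\delta}\|^2$; likewise neither Lipschitz surrogates nor compact-set splitting recovers the supremum over bounded measurable $f$ from closeness of the coupled trajectories (two laws can be far in TV while the coupled variables are arbitrarily close in $L^2$). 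The paper circumvents this entirely: it interpolates the discrete chain as a diffusion with piecewise-constant drift $b_1$ and compares it with the true diffusion (drift $b_2=b_\lambda$) through a Girsanov-type estimate (Lemma SM6.1 in \cite{SupplementaryMaterials}), which bounds $\|\boldsymbol{\delta}_x \mathrm{P}^{(1)}_l-\boldsymbol{\delta}_x \mathrm{P}^{(2)}_l\|_V$ directly by
\begin{equation*}
\left(\boldsymbol{\delta}_x \mathrm{P}_l^{(1)}\left[V^2\right]+\boldsymbol{\delta}_x \mathrm{P}_l^{(2)}\left[V^2\right]\right)^{1/2}\left(\int_0^l \mathbb{E}\left[\left\|b_1\left(t,\mathbf{X}^{(2)}_t\right)-b_2\left(t,\mathbf{X}^{(2)}_t\right)\right\|^2\right]\mathrm{d}t\right)^{1/2},
\end{equation*}
with the moment factor controlled by the discrete and continuous drift conditions (Lemmas SM5.1--SM5.2) applied to $W=V^2$. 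Your It\^o-isometry increment bound then slots into the drift-difference integral exactly as in the paper, but without this change-of-measure lemma (or an equivalent mechanism delivering a $V$-norm, rather than Wasserstein, control of the one-step weak error) your argument does not reach the stated conclusion.
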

\begin{proof}
See Appendix \ref{proof:nonasymptotic-bias}.    
\end{proof}
\noindent \textbf{Remark}: Note that there is a trade-off of selecting the step-size $\delta$.  In order to achieve a small bias, one needs to set a large time interval $ t =  k\delta $, keep $t$ fixed and use a small step size $\delta$. However, larger $k$ means drawing more samples, resulting in longer computation time. In practice, the burn-in period is incorporated in $t$, in which the Markov Chain is dramatically exploring the state space. 
\section{Experiments in Bayesian Imaging}\label{sec:Experiments}
We apply NF-ULA and PnP-ULA on three inverse problems: image motion deblurring, image inpainting, and limited-angle computed tomography (CT) reconstruction. We compare with PnP-ULA since, to the best of our knowledge, it is the state-of-the-art Langevin algorithm with data-driven non-convex regularizers.

\noindent\textbf{Choice of  $\alpha$}: For both NF-ULA and PnP-ULA on different problems, we fine-tune $\alpha$ such that the peak signal-to-noise ratio (PSNR) of the sample mean gets maximized. While in most cases $\alpha\in (0, 5]$ works well, for NF-ULA it is also related to the architecture of the normalizing flow. For CT reconstruction, we use the pre-trained patchNR, a NF-based regularizer learned on medical images, from the code provided in \cite{altekruger2022patchnr} and choose $\alpha = 5000$. Notably, in the original implementation, the maximum a posteriori estimator was considered, and $\alpha = 700$ was the best choice. 

\noindent\textbf{Choices of  $C$ and $\lambda$}: We only perform the study of choosing different $C$ and $\lambda$ in the deblurring experiments.   From \cite{laumont2022bayesian}, a projection term $ (\text{Id} - \Pi_C) $ is introduced to PnP-ULA  to make sure that the posterior satisfies the tail-decay condition. Therefore, for posterior distributions with a slower tail-decay, a smaller $C$ is recommended.  We found experimentally that NF-ULA was numerically stable when the NF prior was trained for more than 20 epochs, even with a large $C$. In this case, $C$ is chosen to be large enough such that $\Pi_C$ is never activated, since we do not expect to choose a small $C$ to change the behaviors of  NF-ULA if it already converges. For a normalizing flow that is not well trained (less than 5 epochs), it is recommended that $C$ should be the same as the range $C_R$ of the dataset.  In the imaging problems, we have that $ C_R = [0, 1]^d $. See Table \ref{tab:epochs-comparing} for details on the algorithm behaviors of NF-ULA with different choices of $C$ and normalizing flow architectures.   For well-trained normalizing flows in NF-ULA and denoiser in PnP-ULA, we set $C = [-100, 100]^d$. Actually all the samples generated in Tables \ref{tab:deblurring}, \ref{tab:inpainting}, and \ref{tab:CT_Gaussian_noise_limited} never escaped $ [-0.2, 1.2]^d $, indicating that the projection $\Pi_C(x)$ was never activated. We keep $\lambda = 5\times 10^{-5}$, even though different $\lambda$ makes no difference in most of our experiments. 

\noindent\textbf{Choice of $\delta$}:  From the convergence analysis in Theorem \ref{theorem:contractive} and Theorem \ref{theorem:nonasymptotic-bias},   any $\delta < (1 / 6)\left(\mathrm{L}_y+ \alpha\mathrm{L} +1 / \lambda\right)^{-1}$ should work. However, this upper bound is not a strict bound and in practice, it is not easy to know the Lipschitz constant $\mathrm{L}$ of $\nabla \log q_\theta(x)$.  To give an upper bound of $\mathrm{L}$, we calculate the spectral norm  of  $ \nabla^2 \log q_\theta(x)$ through power iteration when $x$ is randomly choosen in $C_R$ and the spectral norm are smaller than $2\times10^5$. This upper bound for $\mathrm{L}$ is still too loose since we find that NF-ULA converges for many $\delta$ larger than the corresponding upper bound.     Moreover,  as different $\lambda$ makes no difference in most of our experiments, we fine tune $\delta$ instead of precisely calculating the upperbound given by  $\mathrm{L}$ and $\lambda$.  In most of our experiments, $\delta$ is chosen to be smaller than $(1 / 10)\mathrm{L}_y^{-1}$, to ensure convergence of different algorithms. Our choice of $\delta$ is slightly different from PnP-ULA because the Lipschitz parameter $\mathrm{L}$ of the PnP prior can be set to 1 during training.

\noindent\textbf{Implementations}: We implement all the experiments in \texttt{Python} and utilize \texttt{PyTorch} for implementing the ULA Markov chains. The numerical experiments are run on Intel(R) Xeon(R) 
Platinum 8358P CPU with four Nvidia Tesla A100 GPUs. Codes for NF-ULA are available at Github\footnote{\href{https://github.com/caiziruo/NF-ULA}{https://github.com/caiziruo/NF-ULA}}. 

\subsection{Image Deblurring}\label{sec:deblurring}
We first consider a non-blind motion deblurring problem on human face images. The corresponding forward operator $A$ applies a convolution on the image $x$ with a $9\times 9$ motion-blurring kernel of horizontal blurring direction, with all the elements in the fifth row of the kernel being $1/9$ and the other rows being $0$.   Both $x, y\in \mathbb{R}^d$, where $d = 3\times128\times128$ and the forward operator $A: \mathbb{R}^{d} \mapsto \mathbb{R}^{d}$ is linear. To describe the forward model (likelihood), we add Gaussian noise $n\sim \mathcal{N}(0, \sigma^2\,I^d)$, leading to the following measurement equation and likelihood:
\begin{equation*}
    y = Ax + n, \quad p(y| x) = \dfrac{1}{\left(2 \pi \sigma^2\right)^{d/2}}\exp\left( - \dfrac{\left\| y - Ax \right\|^2}{2\sigma^2}  \right).
\end{equation*}
\noindent\textbf{Network training}: To realize NF-ULA, we train the well-known flow-based model, Glow \cite{kingma2018glow}, on the human face dataset FFHQ \cite{karras2019style} without the first $20$ images, which amounts to $69980$ images in total. All the images are 3-channel images normalized to $C_R = [0, 1]^{3\times 128 \times 128}$. We train Glow from scratch using the publicly available \texttt{PyTorch} implementation\footnote{\href{https://github.com/rosinality/glow-pytorch}{https://github.com/rosinality/glow-pytorch}}, however, NF-ULA can also use an appropriate pre-trained model. The architecture of Glow has five blocks with 32 flows in each block. 

For PnP-ULA \cite{laumont2022bayesian}, we use the real spectral normalization DnCNN (realSN-DnCNN), which is a Lipschitz-continuous denoiser proposed in \cite{ryu2019plug}.    In order to see the behavior of the denoiser without the Lipschitz constraint, we train both the standard DnCNN \cite{zhang2017beyond} and realSN-DnCNN \cite{ryu2019plug} on the image patches of a 980-image subset of FFHQ. To train the denoiser, we follow the same procedure reported in \cite{laumont2022bayesian}, i.e., we add Gaussian noise with the variance  $ \varepsilon =  (5/255)^2$ on the training data batches. In fact,  we also tested $ \varepsilon =  (15/255)^2 $ or $(25/255)^2$ but the generated samples get lower  PSNR.  To train the standard DnCNN, we directly use the code in the Image Restoration Toolbox\footnote{\href{https://github.com/cszn/KAIR}{https://github.com/cszn/KAIR}}. We keep the default parameter settings to train a 17-layer DnCNN on image patches of size $40\times40$. For realSN-DnCNN, the original implementation\footnote{\href{https://github.com/uclaopt/Provable_Plug_and_Play/}{https://github.com/uclaopt/Provable\_Plug\_and\_Play/}} in \cite{ryu2019plug} only supports training on grayscale images, therefore we modified the code to make it applicable to color images. We also set up the number of network layers as 17 and preprocess the data to patches of size $40\times 40$, while setting the Lipschitz parameter to 1. Although DnCNN and realSN-DnCNN are trained on such a small dataset, they can still obtain a peak signal-to-noise ratio (PSNR) of more than 40 dB on the validation set. In fact, the original implementation in \cite{ryu2019plug} trains the denoiser on a dataset consisting of only 400 images, and increasing the size of the dataset does not necessarily lead to a higher PSNR on the validation set. 

The Glow network that we used for NF-ULA has 100870544 parameters in total, while DnCNN has 559363 parameters and realSN-DnCNN has 558336 parameters. To train 100 epochs, Glow spent up to 100 hours, while DnCNN and realSN-DnCNN spent less than 5 hours. The heavier network and the longer training time for Glow pay off when it comes to reconstruction performance and image quality.

\begin{table}[!t]
\caption{
The behavior of NF-ULA by different Glow and different choices of $C$. The algorithm does not converge For Glow with affine coupling layers. For Glow with additive coupling layers, the algorithm converges better when Glow is trained for more epochs.  
}
\centering
\begin{tabular}{|l|r|r|r|r|r|r|}
\hline
Deblurring & \multicolumn{4}{|l|}{network: Glow.\quad $  \alpha = 1.5$ } \\
\hline
           & coupling layers & epochs  & C               & PSNR      \\
\hline
face1     & \multicolumn{4}{|l|}{ } \\
\hline
NF-ULA    & affine          & 100      & $ [0, 1]^d      $ & \text{divergent} \\
NF-ULA    & additive        & 5        & $ [-100, 100]^d $ & \text{divergent} \\
NF-ULA    & additive        & 5        & $ [0, 1]^d      $ & 26.58    \\
NF-ULA    & additive        & 20       & $ [-100, 100]^d $ & 29.84    \\
NF-ULA    & additive        & 100      & $ [-100, 100]^d $ & 30.42   \\
\hline
\end{tabular}
\label{tab:epochs-comparing}
\end{table}

\begin{figure}[!htbp]
\caption{
    Deblurring by PnP-ULA and NF-ULA. 
    What each row represents is written on left of the rows. PSNR values corresponding to the sample mean are provided in Table \ref{tab:deblurring}. PnP-ULA with standard DnCNN does not converge on face2 and face4. On all four faces, NF-ULA (Glow) yields a higher PSNR (for the sample mean estimator) than PnP-ULA (realSN-DnCNN). The sample mean images also have a better visual quality for NF-ULA.  
    }
    \centering
    \includegraphics[width=0.95\linewidth]{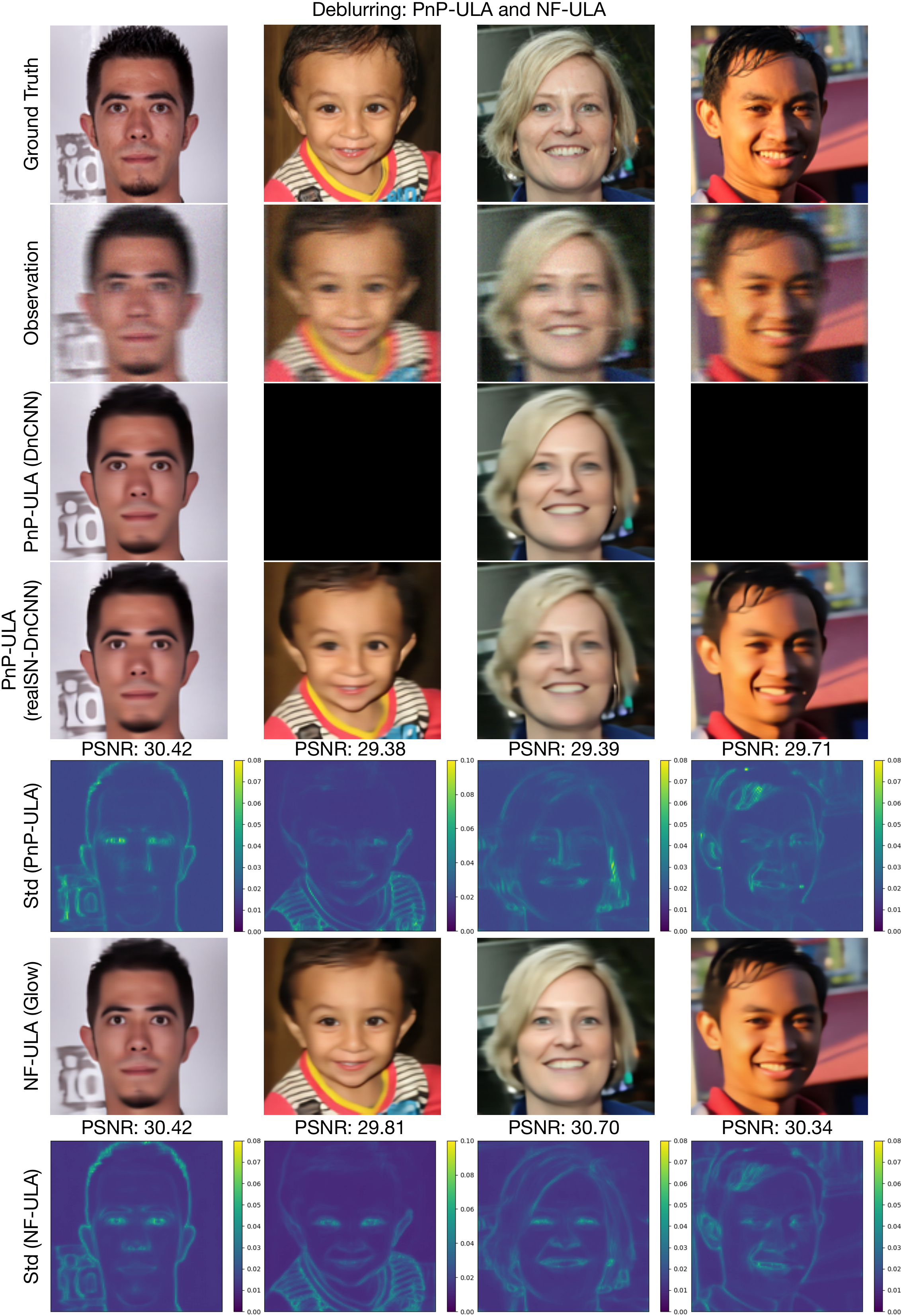}
    \label{fig:deblurring}
\end{figure}

\noindent\textbf{ULA parameters settings}: We set the standard deviation of the Gaussian noise $n$   as $\sigma = 0.02$. To ensure that both PnP-ULA and NF-ULA  are numerically stable, we select the step size $\delta = 5\times 10^{-5}$. For Glow, DnCNN and realSN-DnCNN, $ \alpha = 1.5 $ leads to the highest PSNR. We initialize $X_0 = y$, the noisy blurred observation for both NF-ULA and PnP-ULA.

\noindent\textbf{Performance of the algorithms}: To explore the state space thoroughly, all the experiments have burn-in iterations less than $ 5000$. Since the first sample $X_0$ is initialized as the observation $y$, the PSNR of the samples $X_n$   starts from around $22.78$ dB and then keeps going up and finally stays in an interval, e.g. $[29.0, 31.0]$.   After the burn-in time, we calculate the posterior mean by obtaining $10000$ samples and compute the PSNR of the sample mean. To draw 10000 samples, NF-ULA spends around 3100 seconds,   while PnP-ULA spends 30 seconds.   This is mainly because of the large network Glow uses - the Glow we use has approximately 100 times more parameters than realSN-DnCNN. In fact, we found that computing and forwarding the auto-gradient function of $q_\theta(x)$ takes $10 \%$ longer time than forwarding $q_\theta(x)$ itself. However, we believe that NF-ULA has great potential to leverage smaller and more advanced normalizing flows to reduce computational time.     In Sec. \ref{sec:CT}, we use a lightweight NF-based regularizer and the resulting NF-ULA requires significantly less time.  

To examine the Lipschitz continuity of $\nabla \log q_\theta(x)$ for different kinds of coupling layers, we train two different Glow networks for 100 epochs each, with affine and additive coupling layers, respectively. Also, to verify our hypothesis that better training of the normalizing flow prior will imply better samples from NF-ULA, we trained Glow (additive coupling layers) for 5, 20, and 100 epochs, and compared their performance when used in the NF-ULA framework. 
The PSNR values of the sample mean images corresponding to these variants of NF-ULA with different NF-based priors are reported in Table \ref{tab:epochs-comparing}. With affine coupling layers in Glow, NF-ULA fails to converge because $\nabla \log q_\theta(x)$ is not Lipschitz continuous, which is consistent with Proposition \ref{prop:Lipschitz}. For Glow with additive coupling layers and also for the case where the Glow model is well-trained (more than 20 epochs), NF-ULA works well and the generated samples do not blow up,   even in the case where $C = [-100, 100]^d$ is much bigger than $C_R$. This suggests that a well-trained prior $q_\theta(x)$ already satisfies the tail decay conditions, without imposing the projection $\text{Id} - \Pi_C$. However, it is still essential for the theoretical study. For poorly trained Glow (less than 5 epochs) and large $C$, NF-ULA does not work well - most of the samples go far beyond $C_R$ and the PSNR of them are below $10$ dB. If $C$ is set to be a much smaller set, e.g., $C = C_R$, then the PSNR can be up to 26 dB, which is still considerably lower than what one can achieve with a well-trained Glow.

Intuitively, $q_\theta(x)$ is more \textit{diffusive} when Glow is trained for only a few epochs. After training for some epochs, the normalizing flow is more suitable to serve as an image prior, and the density $q_\theta(x)$ is more concentrated. Moreover, the tail decay condition of $p(x| y)$ is also satisfied with a well-trained prior, even without the projection term. 

\begin{table}[!t]
\caption{Deblurring: Comparison of ULA with different priors for image deblurring. PnP-ULA with a standard DnCNN does not converge on face2 and face4. NF-ULA (Glow) generates samples with slightly higher PSNR than PnP-ULA.}
\centering
\begin{tabular}{|l|r|r|r|r|r|}
\hline
Deblurring  & \multicolumn{3}{|l|}{ net\_epochs = 100, $C = [-100, 100]^d$ }  \\
\hline
        & network      &  parameters       & PSNR  \\
\hline
face1   &     \multicolumn{3}{|l|}{ } \\
\hline
NF-ULA  & Glow         & $  \alpha = 1.5$  & 30.42 \\
\hline 
PnP-ULA & DnCNN        & $  \alpha = 1.5$  & 30.40 \\
PnP-ULA & realSN-DnCNN & $  \alpha = 1.5$  & 30.42 \\
\hline
face2   &     \multicolumn{3}{|l|}{ } \\
\hline
NF-ULA  & Glow         & $  \alpha = 1.5$  & 29.81 \\
\hline 
PnP-ULA & DnCNN        & $  \alpha = 1.5$  & \text{divergent} \\
PnP-ULA & realSN-DnCNN & $  \alpha = 1.5$  & 29.38 \\
\hline
face3   &     \multicolumn{3}{|l|}{ } \\
\hline
NF-ULA  & Glow         & $  \alpha = 1.5$  & 30.70 \\
\hline 
PnP-ULA & DnCNN        & $  \alpha = 1.5$  & 29.61 \\
PnP-ULA & realSN-DnCNN & $  \alpha = 1.5$  & 29.39 \\
\hline
face4   &     \multicolumn{3}{|l|}{ } \\
\hline
NF-ULA  & Glow         & $  \alpha = 1.5$  & 30.34 \\
\hline 
PnP-ULA & DnCNN        & $  \alpha = 1.5$  & \text{divergent} \\
PnP-ULA & realSN-DnCNN & $  \alpha = 1.5$  & 29.71 \\
\hline
\end{tabular}
\label{tab:deblurring}
\end{table}

To compare the performance of ULA with both PnP- and normalizing flow-induced priors, we run NF-ULA using Glow, PnP-ULA using DnCNN, and PnP-ULA with realSN-DnCNN on four human face images randomly selected from the first 20 images of FFHQ \cite{karras2019style}, which are the ones not used during training. In the following experiments, we use Glow with additive coupling layers. Glow, DnCNN, and realSN-DnCNN are all trained for 100 epochs for a fair comparison. The results are shown in Figure \ref{fig:deblurring} and Table \ref{tab:deblurring}. From Table \ref{tab:deblurring}, we note that NF-ULA with Glow generates samples with the highest PSNR. We also present the standard deviation of the samples on the same channel in Fig \ref{fig:deblurring}. NF-ULA has richer details for the posterior mean and more variations for standard deviation, particularly on the eyes, mouths, and hair. This is probably due to a more accurate prior learned by the generative model. It is worth noting that PnP-ULA with DnCNN shows great performance on Face-1 and Face-3, but is divergent on Face-2 and Face-4. However, PnP-ULA with realSN-DnCNN converges on all images, albeit with lower PSNR than NF-ULA.

We record the PSNR of the samples and the minimum mean square error (MMSE) estimator in Figure \ref{fig:PSNR_x_mmse}. It's about the deblurring experiments of face1 and the evolutions for face2, face3, and face4 are similar. In the left figure, we start from the burn-in period until $15000$ samples. The  MMSE estimator is approximated by the last $10000$ samples. For both algorithms, the burn-in periods are less than $5000$ samples. Regardless of the sampling time, NF-ULA shows a faster increase of PSNR, which means the convergence speed of the first-order moment for NF-ULA mildly outperforms PnP-ULA. However, in the right figure, we consider evolution w.r.t. the sampling time and NF-ULA has a slower increase of PSNR. NF-ULA has a burn-time of about 400 seconds while PnP-ULA is less than 40 seconds.

\begin{figure}[!t]
    \centering
    \includegraphics[width=0.8\linewidth]{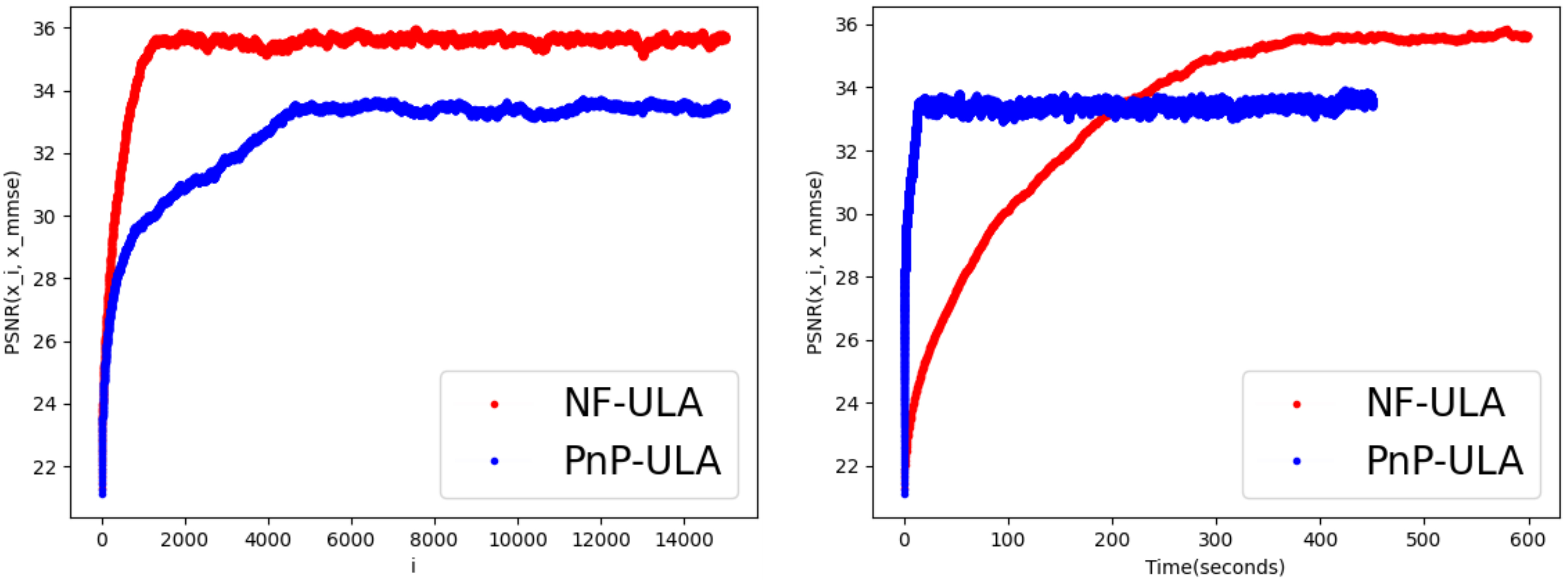}
    \label{fig:PSNR_x_mmse}
    \caption{
    The evolution of PSNR($x_i, x_{\mathrm{mmse}}$) of deblurring (face1). The left figure is according to the number of the samples and the right one is according to elapsed time. A faster increase means a faster convergence speed.
    }
\end{figure}

\begin{figure}[!t]
    \centering
    \includegraphics[width=1.0\linewidth]{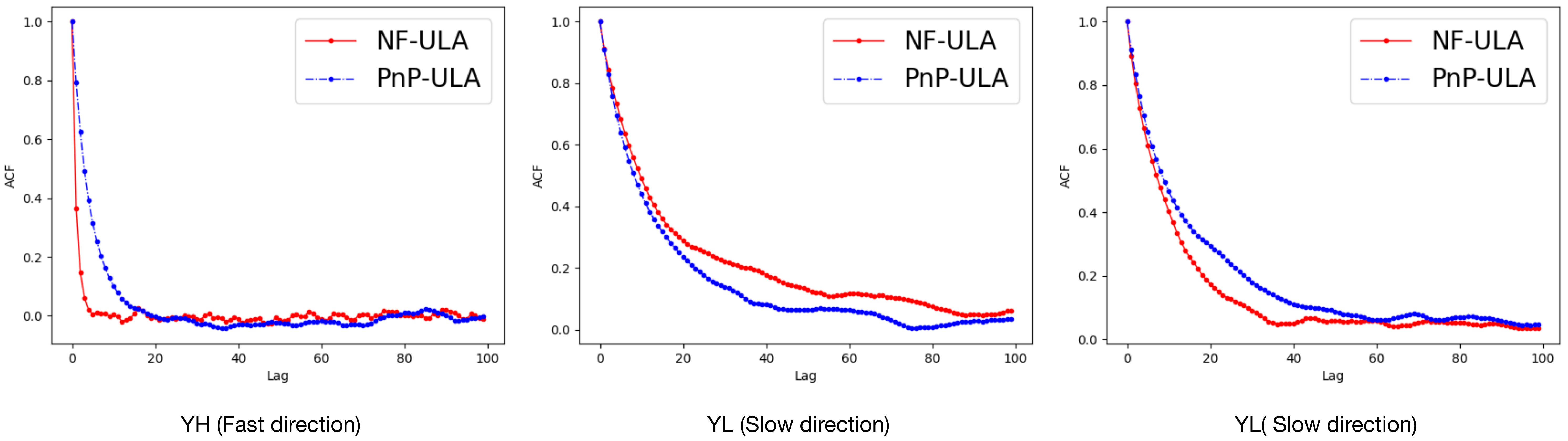}
    \label{fig:acf_deblurring}
    \caption{The autocorrelation function (ACF) of the samples (deblurring on face1). The definition of the ACF is given in \eqref{def:acf}.  ACF is calculated by wavelet basis using the band-pass coefficients (YH) and the low-pass coefficients (YL).  Faster decreasing ACF implies faster convergence of the Markov chain.
    }
\end{figure}

One common approach to studying the convergence speed of a Markov chain is to calculate the $d$-dimensional auto-correlation function (ACF) of it. For samples $\{Y_i\}_{i = 1}^{N}$ from a one-dimensional Markov chain, the sample auto-correlation function is given by
\begin{equation}
\label{def:acf}
\omega(l) = \dfrac{\sum_{t = 1}^{n - l}(Y_{t + l} - \Bar{Y})(Y_t - \Bar{Y})}{\sum_{t = 1}^{n}(Y_t - \Bar{Y})^2},\quad \Bar{Y} = \dfrac{1}{n}\sum_{t = 1}^{n} Y_t,
\end{equation}
where $l = 0, 1, \cdots, n-1$, is the lag between the samples. Since the samples generated by ULA are not strictly uncorrelated, faster decreasing ACF means that the samples are less correlated and generally implies faster convergence of the Markov chain to some extent. Notably, the calculation of ACF is not easy in high-dimensional problems. Therefore, we firstly transform the image samples  using wavelet basis and obtain the band-pass coefficients (YH) and the low-pass coefficients (YL). YH contains the image details while YL captures the overall image structure. We consider the finest scale coefficients in YH.  To  characterize the Markov chain generated by NF-ULA (Glow) and PnP-ULA (realSN-DnCNN), we randomly select 100 dimensions   respectively from YH and YL, and calculate the ACF on those dimensions. It should be noted that the ACF can have different rates of decay in different directions, therefore it is time-consuming to analyze the ACF of all the image dimensions and calculate the fastest and slowest decreasing direction.  However, ACF in YH mostly have  faster decrease and ACF in YL will have slower decrease. In Fig \ref{fig:acf_deblurring}, we show the convergence of ACF (face1), along one \textit{fast direction} in YH and two \textit{slow directions} in YL. In the fast direction, the ACF of PnP-ULA decreases from 1 to 0 within about 20 lags, while for NF-ULA it converges even faster (within approx. 10 lags). For slow directions, both NF-ULA and PnP-ULA hold a non-zero ACF until more than 40 lags, and it is not immediately clear which of these two methods has a faster decay of the ACF.  ACF of face2, face3 and face4 are similar as face1 and hence omitted here.

\subsection{Image Inpainting}\label{sec:inpainting}
\begin{figure}[!htbp]
\caption{
    Comparison of image inpainting performance of PnP-ULA and NF-ULA.
    The PSNR values of the sample mean images are reported in Table \ref{tab:inpainting}. NF-ULA (Glow) yields a higher PSNR (by approximately 2.5-3.0 dB) of the sample mean images than PnP-ULA with a realSN-DnCNN denoiser. This experiment underscores the importance of stronger regularization (which the Glow-based prior can achieve) when the forward operator is severely ill-posed. 
    }
    \centering
    \includegraphics[width=0.95\linewidth]{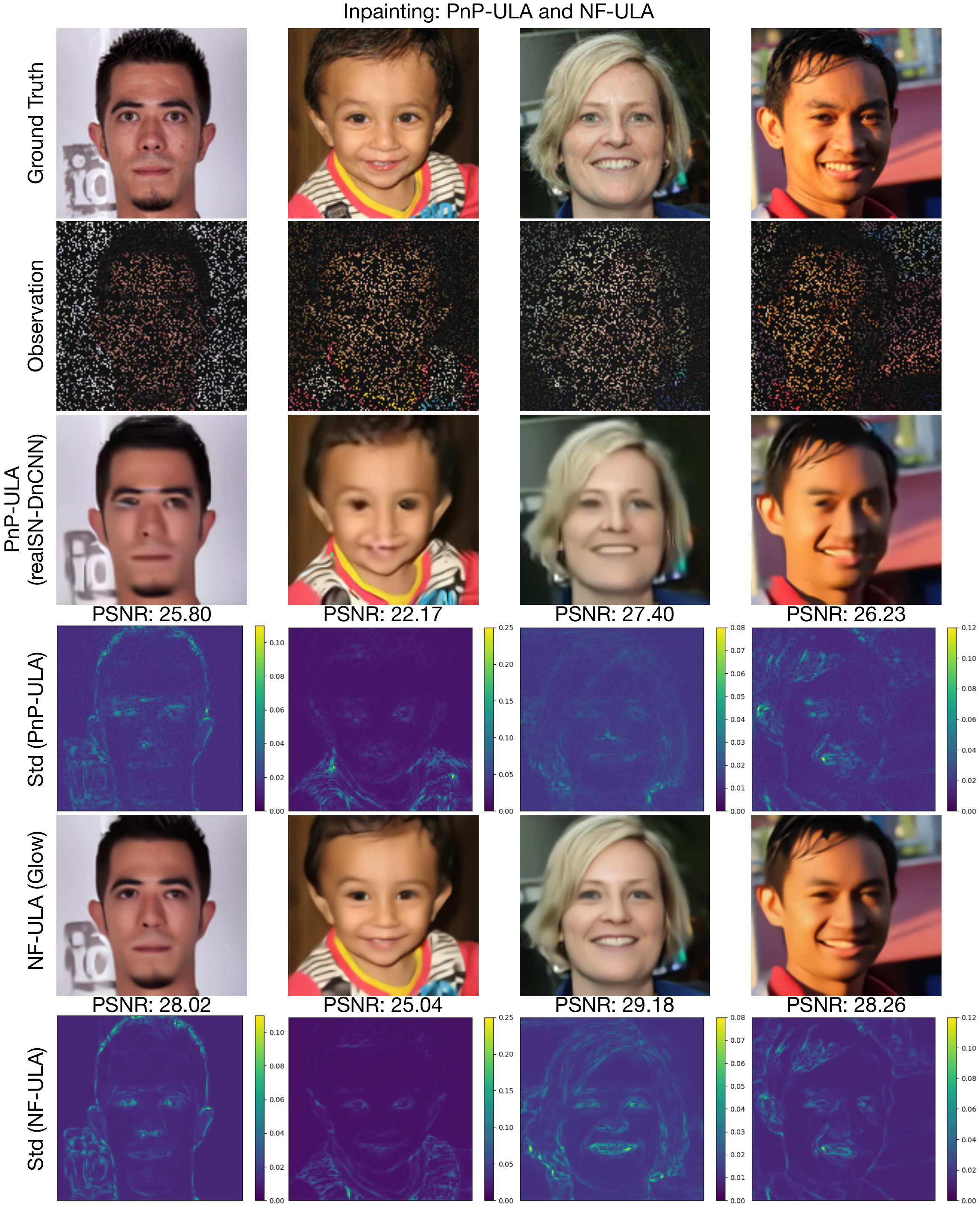}
    \label{fig:inpainting}
\end{figure}

\begin{table}[!t]
\caption{Inpainting: Comparison of the ULA with different priors. The parameter $\alpha$ is fine-tuned to maximize the PSNR for both algorithms. Since inpainting relies more on the prior, NF-ULA has a higher PSNR for the sample mean as compared with PnP-ULA.}
\centering
\begin{tabular}{|l|r|r|r|r|r|}
\hline
Inpainting  & \multicolumn{3}{|l|}{ net\_epochs = 100, $C = [-100, 100]^d$ }  \\
\hline
         & network       &  parameters      & PSNR  \\
\hline
face1   &     \multicolumn{3}{|l|}{ } \\
\hline
NF-ULA  & Glow          & $  \alpha = 2$    & 28.02 \\
\hline 
PnP-ULA & realSN-DnCNN  & $  \alpha = 2.5$  & 25.80 \\
\hline
face2   &     \multicolumn{3}{|l|}{ } \\
\hline
NF-ULA  & Glow          & $  \alpha = 2$    & 25.04 \\
\hline 
PnP-ULA & realSN-DnCNN  & $  \alpha = 2.5$  & 22.17 \\
\hline
face3   &     \multicolumn{3}{|l|}{ } \\
\hline
NF-ULA  & Glow          & $  \alpha = 2$    & 29.18 \\
\hline 
PnP-ULA & realSN-DnCNN  & $  \alpha = 2.5$  & 27.40 \\
\hline
face4   &     \multicolumn{3}{|l|}{ } \\
\hline
NF-ULA  & Glow          & $  \alpha = 2$    & 28.26 \\
\hline 
PnP-ULA & realSN-DnCNN  & $  \alpha = 2.5$  & 26.23 \\
\hline
\end{tabular}
\label{tab:inpainting}
\end{table}

\begin{figure}[!t]
    \centering
    \includegraphics[width=1.0\linewidth]{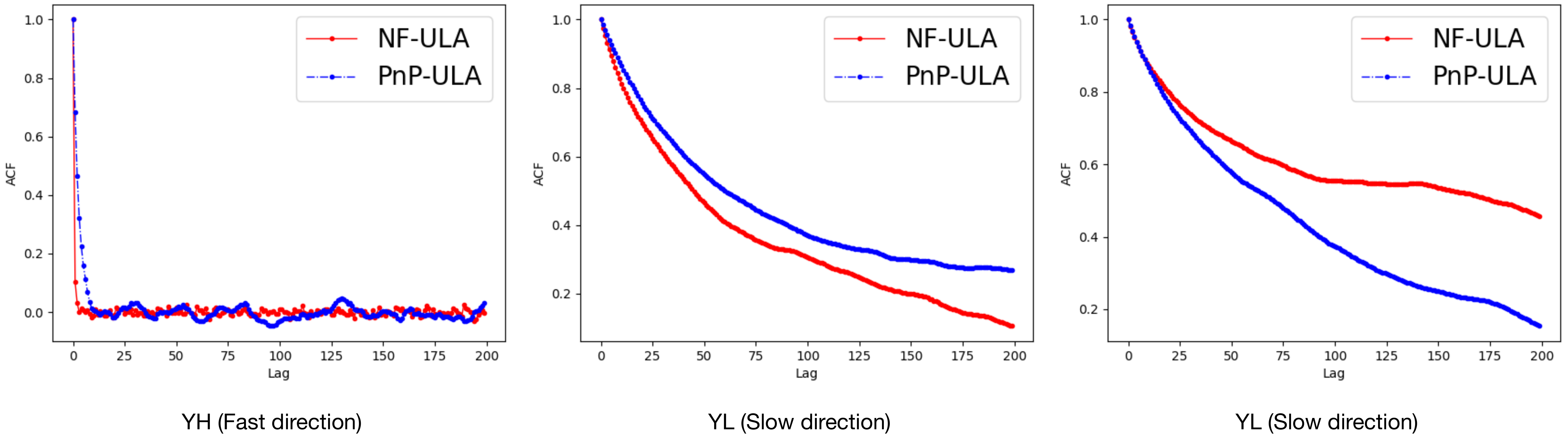}
    \caption{
    The auto-correlation function (ACF) of the samples (inpainting on face1).
    The definition of ACF is given in \eqref{def:acf}. ACF is calculated by wavelet basis using the band-pass coefficients (YH) and the low-pass coefficients (YL). Faster decreasing ACF implies faster convergence of the Markov chain.
    }
    \label{fig:acf_inpainting}
\end{figure}

In this section, we present the experimental results on image inpainting. We still consider human face images and use the Glow and realSN-DnCNN networks trained as explained in Sec. \ref{sec:deblurring}. For inpainting, the forward operator $A$ applies masking on $x$ so that $80\%$ of the pixels in $x$ are missing. We choose different $\alpha $ to ensure both NF-ULA and PnP-ULA have the best performance: $\alpha = 2.0$ works well for NF-ULA, while for PnP-ULA $\alpha = 2.5$ works the best. We maintain the same setting for the other important hyper-parameters of the experiment, such as the noise standard deviation $\sigma = 0.02$, the dimension of image and observation $x,y\in \mathbb{R}^d = \mathbb{R}^{3\times128\times128}$, the step-size of both algorithms $\delta = 5\times 10^{-5}$, the convex set $C = [-100, 100]^d$, and the initialization $X_0 = y$.

\noindent\textbf{Performance of the algorithms}: In contrast with deblurring, we found that both NF-ULA and PnP-ULA have much longer burn-in times.   We initialize $X_0$ with the measurement $y$, whose PSNR is only $5.46$ dB. NF-ULA has a burn-in iteration of $10000$ until the PSNR of $X_n$ grows more than 25 dB and becomes stable, while PnP-ULA takes about $80000$-iterations (eight times larger than NF-ULA) for burn-in. The reason might be that Glow's powerful prior information accelerates the burn-in process, particularly on the pixels missing in the observation. After the burn-in time, we draw $10000$ samples and compute the PSNR of the samples' mean. Drawing 10000 samples takes approximately the same time as in the deblurring experiment.

The sample mean images and the standard deviations are shown in Fig. \ref{fig:inpainting}. As compared with PnP-ULA, NF-ULA recovers more areas of the face and shows higher uncertainties on eyes, hairs, noses, and teeth. Those areas are easily distinguishable between different human faces and should have higher uncertainties than other areas, e.g., foreheads and cheeks. From Table \ref{tab:inpainting}, we observe that NF-ULA achieves a higher PSNR than PnP-ULA. For both NF-ULA and PnP-ULA, the PSNR of the posterior mean is lower than that of the deblurring experiment - the forward operator of masking 80\% pixels is not invertible and the observation $y$ in inpainting is ill-conditioned, which means that in the Bayesian setting, the samples rely on the prior than the likelihood. In such cases, NF-ULA provides a stronger and more informative prior as compared to PnP-ULA.

To calculate the ACF in this inpainting results, we use the same strategy as in deblurring:  calculating the ACF respectively on 100 randomly selected dimensions of YH and YL. In Fig. \ref{fig:acf_inpainting}, we show the ACF including one fast direction in YH and two slow directions in YL. Similar to Fig. \ref{fig:acf_deblurring}, among those fast decreasing directions, the ACF of NF-ULA is slightly faster than PnP-ULA and they both decrease from 1 to 0 within 20 lags. 
For slow directions, both algorithms have  slower decreasing ACF than the deblurring experiments and we cannot conclude for which method, the ACF decreases faster. ACF of face2, face3 and face4 are similar as face1 and omitted.
 
\subsection{CT Reconstruction from limited-angle measurements}\label{sec:CT}
\begin{figure}[!htbp]
    \caption{
    CT reconstruction of Gaussian noise (limited angles). 
    What each column represents is written on top of the columns. PSNR of the samples mean are provided in Table \ref{tab:CT_Gaussian_noise_limited}. NF-ULA (patchNR) yields higher PSNR of samples mean and better samples Std than PnP-ULA (realSN-DnCNN).
    }
    \centering
    \includegraphics[width=0.95\linewidth]{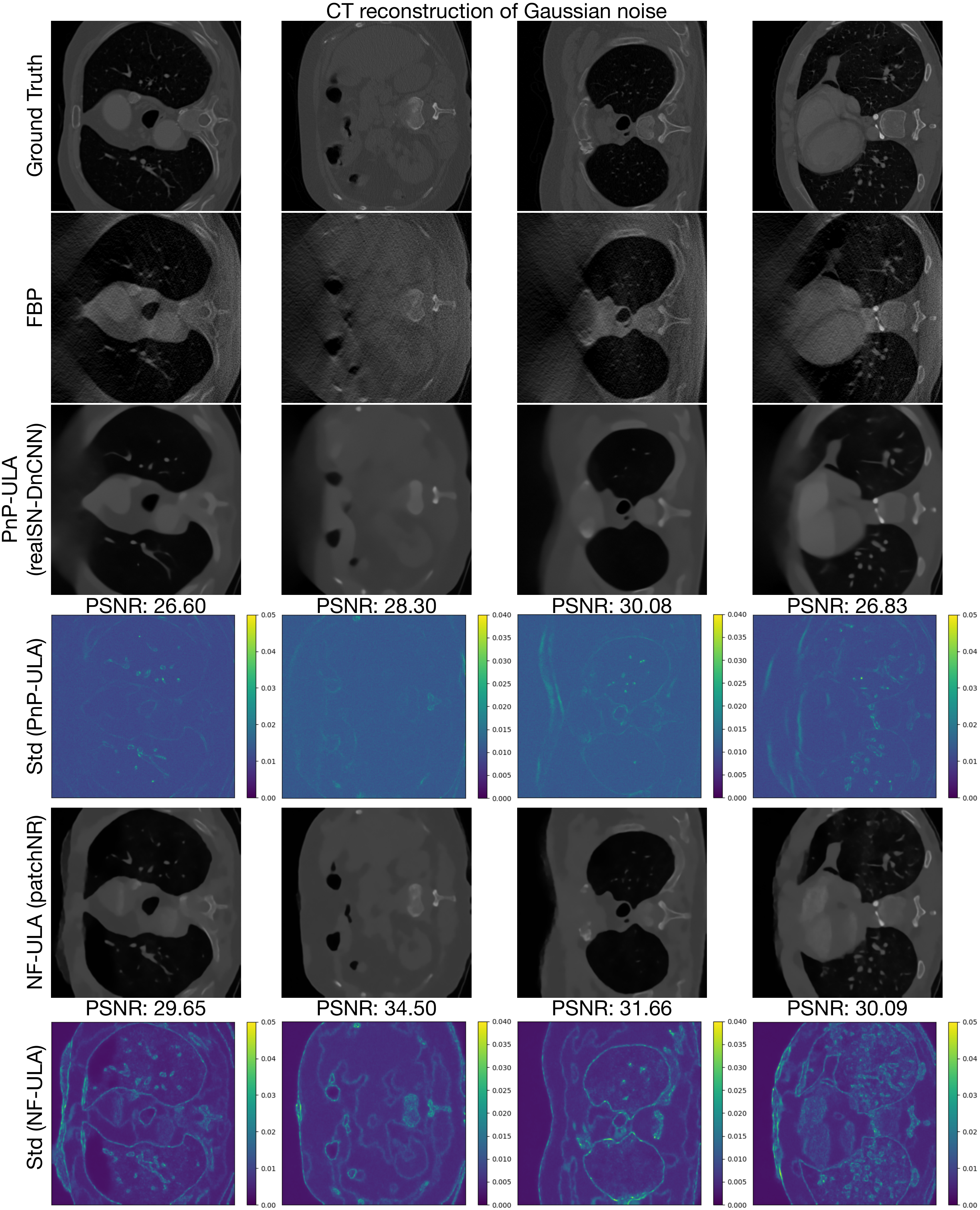}
    \label{fig:CT_Gaussian_noise_limited}  
\end{figure}

\begin{table}[!t]
\caption{Limited-angle CT reconstruction from Gaussian noise-corrupted limited-angle projection data. 
$\alpha$ is chosen to maximize the PSNR for both PnP-ULA and NF-ULA to make a fair comparison. NF-ULA leads to a higher sample mean PSNR than PnP-ULA.
}.  
\centering
\begin{tabular}{|l|r|r|r|r|r|r|r|}
\hline
CT      & \multicolumn{3}{|l|}{ $C = [-100, 100]^d$ } \\
\hline
        & network       &  parameters       & PSNR      \\
\hline
Image-1 &     \multicolumn{3}{|l|}{ } \\
\hline
NF-ULA  & PatchNR       & $  \alpha = 5000$ & 29.65   \\
\hline 
PnP-ULA & realSN-DnCNN  & $  \alpha = 3$    & 26.60     \\
\hline
Image-2 &     \multicolumn{3}{|l|}{ } \\
\hline
NF-ULA  & PatchNR       & $  \alpha = 5000$ & 34.50   \\
\hline 
PnP-ULA & realSN-DnCNN  & $  \alpha = 3$    & 28.30    \\
\hline
Image-3 &     \multicolumn{3}{|l|}{ } \\
\hline
NF-ULA  & PatchNR       & $  \alpha = 5000$ & 31.66   \\
\hline 
PnP-ULA & realSN-DnCNN  & $  \alpha = 3$    & 30.08    \\
\hline
Image-4 &     \multicolumn{3}{|l|}{ } \\
\hline
NF-ULA  & PatchNR       & $  \alpha = 5000$ & 30.09   \\
\hline 
PnP-ULA & realSN-DnCNN  & $  \alpha = 3$    & 26.83    \\
\hline
\end{tabular}
\label{tab:CT_Gaussian_noise_limited}
\end{table}

We consider the classical ill-posed inverse problem of X-ray CT reconstruction from limited-angle projection data. We use the \texttt{torch\_radon} library \cite{torch_radon} to model the forward operator $A$ that computes projections using a fan-beam acquisition geometry. Instead of considering the full angular range $[0, 2\pi]$, we  only have projection data corresponding to an angular sweep over the range $[0.1\,\pi, 0.9\,\pi]$ of angles. We set the number of detector elements to $144$, and test the algorithms for both Gaussian noise and Poisson noise (see Appendix \ref{sec:CT-poisson}). The noisy projection data is given by
\begin{equation}
\label{eq:forward_model_CT}
y = Ax + n \text{ or } y \sim P(Ax),
\end{equation}
where $n$ is used to denote additive Gaussian noise and $P(Ax)$ denotes adding a non-additive noise on $Ax$ such as Poisson noise.  The image to be recovered is $x\in \mathbb{R}^{362\times362}$ and the sinogram is $y\in \mathbb{R}^{144\times 512}$. 
We calculate the norm of $A$ and obtain that $\|A\|=\underset{x:\|x\|=1}{\sup}\,\|Ax\| \approx 100$.

\noindent\textbf{Network architecture}: The features and textures of medical images are more difficult to learn as compared with those in natural images. Hence, normalizing flows do not have comparable performance in generating semantically meaningful images for medical imaging applications, unlike applications involving natural images.  Therefore,  we utilize \textit{patchNR} \cite{altekruger2022patchnr}, which is analogous to normalizing flow,   to apply NF-ULA for CT reconstruction.  PatchNR is a powerful regularizer that involves Glow coupling layers learned on small patches extracted from very few images (only six images), which has shown promising results for CT reconstruction \cite{altekruger2022patchnr}. PatchNR uses  five GlowCoupling blocks and permutations in an alternating
manner, where the coupling blocks are from the FrEIA package \cite{freia}. The three-layer subnetworks are fully connected with ReLU activation functions and 512 nodes, which overall result in a much smaller network than Glow. It should be noted that extracting the patches from an image is not a reversible process, therefore patchNR actually learns the prior over the image patches and cannot do unconditional sampling using $x = T(z)$. Even so, the log gradient is still computable and Lipschitz continuous, since its GlowCoupling blocks satisfy Proposition \ref{prop:Lipschitz}.

The patchNR we used is given by the pre-trained model\footnote{\href{https://github.com/FabianAltekrueger/patchNR}{https://github.com/FabianAltekrueger/patchNR}} trained on six images from the LoDoPaB dataset \cite{leuschner2021lodopab}. For PnP-ULA, we train the denoiser realSN-DnCNN on a 128-image subset of LoDoPaB, by adding Gaussian noise with the variance  $ \varepsilon =  (5/255)^2$ on the training data batches. We train a 17-layer realSN-DnCNN on the preprocessed image patches with size $40 \times 40$.    The Lipschitz parameter of the realSN-DnCNN is set to 1. The patchNR has 2908880 parameters in total and the realSN-DnCNN has 556032 parameters. 

\noindent\textbf{ULA parameters settings}: While in \cite{altekruger2022patchnr} $\alpha = 700$ is the default setting of the considered maximum a posteriori estimator, $\alpha = 5000$ (Gaussian noise) works fine for NF-ULA. For PnP-ULA we set $\alpha = 3$.    We use a smaller step size for both algorithms, namely $\delta = 10^{-6}$, to ensure convergence, since in CT reconstruction the forward operator $A$ has a larger norm (approximately 100) than deblurring and inpainting.   The convex set is set to be $C = [-100, 100]^d$. We initialize $X_0$  using the filtered back-projection (FBP) reconstruction.

\noindent\textbf{Gaussian noise-corrupted measurement}: We first test the case with additive Gaussian noise. To be more specific, we add Gaussian noise
$n\sim \mathcal{N}(0, \sigma^2\,I^m)$ in \eqref{eq:forward_model_CT} to the clean projection data. Since $ \|A \| \approx 100 $,   we select $\sigma = 1.0$ to simulate the noisy sinogram $y$. The likelihood can be expressed as
\begin{equation}
p(y| x) = \dfrac{1}{\left(2 \pi \sigma^2\right)^{m/2}}\exp\left( - \dfrac{\left\| y - Ax \right\|^2}{2\sigma^2}  \right).
\end{equation}
 Since the gradient of the log-likelihood is not globally Lipschitz for Poisson likelihood, the additional experiments with Poisson noise are moved to Appendix \ref{sec:CT-poisson}. Note that NF-ULA with Poisson likelihood still converges although the assumptions needed for the theoretical guarantees do not hold, which warrants further investigations and we leave it for future work.



\noindent\textbf{Performance of the algorithms}: We test PnP-ULA and NF-ULA on another four images from LoDoPaB \cite{leuschner2021lodopab} which were not used for training the patchNR network utilized by NF-ULA and the realSN-DnCNN denoiser used in PnP-ULA. They are different from the six images trained by patchNR and 128 images trained by realSN-DnCNN. The four ground-truth images used for evaluating the performance of NF-ULA and PnP-ULA for limited-angle CT are shown in the first column of Fig. \ref{fig:CT_Gaussian_noise_limited}. 

Both PnP-ULA and NF-ULA have more than $ 20000 $ burn-in iterations. Since we initialize by setting $X_0 $ equal to the FBP reconstruction, the PSNR of $X_n$ starts from around $21.90$ dB, then slowly increases, and finally stabilizes. Note that for different test images, the burn-in time varies. For Image-2 in Table \ref{tab:CT_Gaussian_noise_limited}, PnP-ULA has 30000 burn-in iterations, and the PSNR of the samples never exceeds 29 dB. In contrast, the PSNR of the samples increases until 33 dB for NF-ULA and finally the burn-in time for NF-ULA on Image-2   is around 70000 iterations.  

After the burn-in time, we calculate the posterior mean and the standard deviation around it by obtaining $10000$ samples and computing the PSNR of the samples' mean. For Gaussian noise,  drawing 10000 samples by NF-ULA takes around 500 seconds, whereas, for PnP-ULA, it takes about 70 seconds.   Thanks to the smaller network size of patchNR compared to Glow, it saves a large proportion of time in computation.  

Fig. \ref{fig:CT_Gaussian_noise_limited} shows the ground-truth images (1st column), the FBP (2nd column), the posterior mean and standard deviation of PnP-ULA (in Columns 3 and 4, respectively), and those corresponding to NF-ULA (in Columns 5 and 6, respectively). The posterior mean images indicate that NF-ULA has a significantly better sample quality than PnP-ULA, which exhibits poor reconstruction in the left area, due to the missing angles and the extremely ill-posed problem. NF-ULA can recover the details well, which is consistent with the results in \cite{altekruger2022patchnr} that patchNR works well in the limited-angle CT experiments. For standard deviation in the case of Gaussian noise, NF-ULA shows more realistic uncertainties than PnP-ULA in most areas but still has relatively large uncertainties in the left area (where no projection is available). Table \ref{tab:CT_Gaussian_noise_limited} shows the PSNR of the posterior mean. NF-ULA achieves a considerably higher PSNR than PnP-ULA. 

\begin{figure}[!t]
    \centering
    \includegraphics[width=1.0\linewidth]{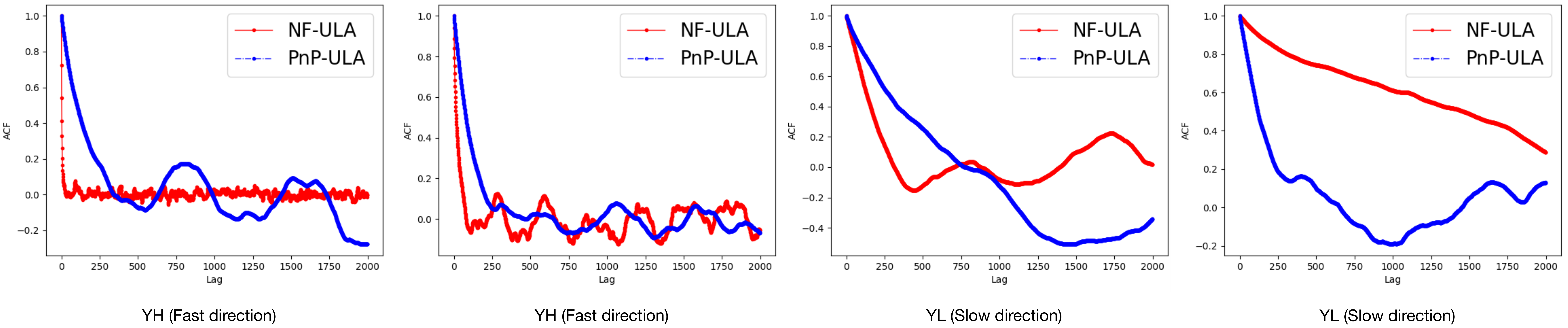}
    \caption{
    The autocorrelation function (ACF) of the samples (Gaussian noise CT on Image-1). The definition of ACF is given in \eqref{def:acf}. ACF is calculated by wavelet basis using the band-pass coefficients (YH) and the low-pass coefficients (YL). Faster decreasing ACF implies faster convergence of the Markov chain. On slow directions, the independence are not achieved for both algorithms.
    }
    \label{fig:acf_CT}
\end{figure}

We also compare the ACF (Image-1) in Fig. \ref{fig:acf_CT} to study the convergence speed. The ACF is calculated by randomly selecting $100$ dimensions respectively from YH and YL. The ACF on the fast direction is different from deblurring and inpainting: On fastest directions NF-ULA decreases from $1$ to $0$ within $100$ lags and the independence is achieved, while the independence of PnP-ULA is not achieved (as shown in the first sub-figure). On some fast directions, the independence of NF-ULA and PnP-ULA is both not well achieved, as demonstrated in the second sub-figure. For slow directions, both two algorithms decrease slowly and independence is not achieved.  ACF of Image-2, Image-3 and Image-4 are similar and omitted.

\section{Conclusion and Outlook}\label{sec:conclusions}
We introduced NF-ULA, a Langevin diffusion-based Monte Carlo algorithm, which takes advantage of a normalizing flow for prior density estimation. The normalizing flow can be pre-trained agnostic to the forward operator of the inverse problem that one seeks to solve. Since NF-ULA only requires the log gradient of the prior, our algorithm still works in cases where the normalizing flow can only evaluate the density but cannot do unconditional sampling.   To guarantee that the posterior distribution is well-defined, we follow \cite{laumont2022bayesian} to add a projection operator onto a convex and compact subset of the image space, although in most cases the projection is not activated, for instance, if the prior is well-trained. Since the density  of normalizing flow itself   can be evaluated, NF-ULA can be extended to a Metropolis-adjusted version, which is left for future studies. For the theoretical analysis of NF-ULA, we first prove the well-posedness of the posterior distribution that we aim to draw samples from. To prove the convergence of NF-ULA, the most essential condition is the Lipschitz drift, and we, therefore, derive a sufficient condition for having a Lipschitz-continuous gradient of the log density of the normalizing flow. Moreover, we show that NF-ULA admits an invariant distribution, and we give a non-asymptotic bound on the bias. We demonstrate our method through several Bayesian imaging experiments, namely image deblurring, image inpainting, and limited-angle CT reconstruction. We show that better training of the normalizing flows leads to better samples and convergence of NF-ULA. Although currently, NF-ULA has a longer sampling time because of the large network of normalizing flows, it has the potential to use a better and smaller network to reduce the computation in the future.

There are still some unanswered questions about NF-ULA. Although we give a sufficient condition for the gradient of the log density of normalizing flow to be Lipschitz, the condition might be relaxed, or it might even be possible to derive a condition that is both necessary and sufficient. Moreover, given different curvature conditions\cite{majka2020nonasymptotic, de2019convergence}   on the drift other than Lipschitz, the studies of ULA on non-convex potentials have shown different convergence results and they can also be applied to NF-ULA. However, this might require re-training the normalizing flows to enforce such conditions and necessitates further research. Meanwhile when the Lipschitz assumption does not hold, the results of our Poisson noise experiments lack an explanation, which also requires a more detailed study.

\section*{Acknowledgments}
CBS acknowledges support from the Philip Leverhulme Prize, the Royal Society Wolfson Fellowship, the EPSRC advanced career fellowship EP/V029428/1, EPSRC grants EP/S026045/1 and EP/T003553/1, EP/N014588/1, EP/T017961/1, the Wellcome Innovator Awards 215733/Z/19/Z and 221633/Z/20/Z, the European Union Horizon 2020 research and innovation program under the Marie Skodowska-Curie grant agreement No. 777826 NoMADS, the Cantab Capital Institute for the Mathematics of Information and the Alan Turing Institute.  ZC and XZ  were partially supported by NSFC (No. 12090024) and Sino-German center grant (No.M-0187) .

\bibliographystyle{siamplain}
\bibliography{reference}

\appendix
\section{Proofs}

\subsection{Proof of Lemma \ref{lemma:finite-moment}}
\label{proof:finite-moment}
\begin{proof}
For a constant $R_0>0$, let 
\begin{equation*}
    B(0,R_0) := \left\{z\in\mathbb{R}^d:\left\|z\right\|_2\leq R_0\right\}
\end{equation*}
be the closed ball of radius $R_0$ centered at the origin. Since $C\subset\mathbb{R}^d$ is compact, there exists $R_0>0$ such that $C\subset B(0,R_0)$. Therefore, for all $x\notin B(0,R_0)$, it follows that
\begin{align*}
\left\| x - \Pi_C(x) \right\|_2 &\stackrel{\text{(a)}}{\geqslant} \left\| x - \Pi_{B(0,R_0)}(x) \right\|_2\stackrel{\text{(b)}}{\geqslant}  \|x\|_2-R_0\geq 0,
\end{align*}
where (a) is true since $C\subset B(0,R_0)$ and (b) follows from the triangle inequality.
Then, for all $k \in \mathbb{N}$, the following holds:
\begin{equation*}
\begin{aligned}
& \int_{\mathbb{R}^d \setminus B(0, R_0)} \left\| x \right\|^k  \exp\left( - \dfrac{\left\| x - \Pi_C(x) \right\|_2^2}{2\lambda}\right) \mathrm{d} x 
\\
& \leqslant \int_{\mathbb{R}^d  \setminus B(0, R_0)} \left\| x \right\|^k  \exp\left( - \dfrac{  \left( \left\| x \right\|_2 - R_0 \right)^2  }{2\lambda}\right) \mathrm{d} x \\
& \leqslant \int_{\mathbb{R}^d  \setminus B(0, R_0)} \left\| x \right\|^k  \exp\left( - \dfrac{ \|x\|^2_2 - 2R_0^2    }{4\lambda}\right) \mathrm{d} x \,\,\,  \\
& < +\infty,
\end{aligned}
\end{equation*}
where the last inequality follows from the fact that $k$-order moments of Gaussian distribution are finite for any $k$.
\end{proof}


\subsection{Proof of Proposition \ref{prop:bounded_q}}\label{proof:bounded_q}
\begin{proof}
Without loss of generality, we only need to consider the cases when the total number of layers is $ k= 1, 2$. 

(1) We firstly consider the case that $k = 1$  and $T^{-1} = G$ is a composition of only a one-layer  coupling network. Then \eqref{eq:struct_of_inv_map1} can be simplified as:
\begin{equation}
    G_j(x_j, x_{<j}) = \varphi_j(x_{<j}) x_j + \eta_j(x_{<j}), \,\,j = 1,\cdots, d.
\end{equation}
Since  $\forall r<j$, $G_{r} $ is independent of $x_j$  and the diagonal of the Jacobian is $(J_G(x))_{j, j} = \varphi_j(x_{<j})$, from the change of variables
\begin{equation}
\label{NF:change of variable1}
\begin{aligned}
    q(x)&=q_{z}(z)\left|\operatorname{det} J_{T}(z)\right|^{-1} \quad   \\
    &=q_{z}\left(T^{-1}(x)\right)\left|\operatorname{det} J_{T^{-1}}(x)\right|,
\end{aligned}
\end{equation}
we have that
\begin{equation*}
\begin{aligned}
    \log q_\theta(x) 
    & =  \log q_{z}\left(G(x)\right) + \log \left|\operatorname{det} J_{G}(x)\right| 
    \\
    & = - \dfrac{1}{2}\left\| G(x) \right\|_2^2 + \log \left|\operatorname{det} J_{G}(x)\right| + \text{const}.
    \\
    &
    = -\dfrac{1}{2}\left\| G(x) \right\|^2_2 + \sum_{j = 1}^{d} \log \left| \varphi_j(x_{<j}) \right| + \text{const}.
    \\
    & \leqslant \sum_{j = 1}^{d} \log \left| \varphi_j(x_{<j}) \right| + \text{const}.
\end{aligned}
\end{equation*}
Since $\varphi_j$ is a bounded function $\forall j$,  it follows that $ \log \left| \varphi_j(x_{<j}) \right|$ is upper bounded for all $j$ and $ \log q_\theta(x)$ is upper bounded on $\mathbb{R}^d$.

(2) Secondly, assume that $k = 2$ and $T^{-1} = G\circ H(x)$, where $H:x\mapsto \omega$ and $G:\omega \mapsto z$. Similarly, we have that
\begin{equation*}
\begin{aligned}
    \log q_\theta(x) 
    & =  \log q_{z}\left(G\circ H(x)\right) + \log \left|\operatorname{det} J_{G\circ H}(x)\right| 
    \\
    & = - \dfrac{1}{2}\left\| G\circ H(x) \right\|_2^2 + \log \left|\operatorname{det} J_{G }(\omega)\right| + \log \left|\operatorname{det} J_{H }(x)\right| + \text{const}.
    \\
    &
    = -\dfrac{1}{2}\left\| G\circ H(x) \right\|^2_2 + \sum_{j = 1}^{d} \left( \log \left| \varphi^{(2)}_j(\omega_{<j}) \right|  + \log \left| \varphi^{(1)}_j(x_{<j}) \right| \right) + \text{const}.
    \\
    & \leqslant \sum_{j = 1}^{d} \left( \log \left| \varphi^{(2)}_j(\omega_{<j}) \right|  + \log \left| \varphi^{(1)}_j(x_{<j}) \right| \right) + \text{const}.
\end{aligned}
\end{equation*}
Since $\varphi^{(1)}_j$ and $\varphi^{(2)}_j$ are bounded functions $\forall j$,  it follows that $ \log \left| \varphi^{(2)}_j(\omega_{<j}) \right| + \log \left| \varphi^{(1)}_j(x_{<j}) \right|$ is upper bounded for all $j$ and $ \log q_\theta(x)$ is upper bounded on $\mathbb{R}^d$.
\end{proof}

\subsection{Proof of Proposition \ref{prop:well-posedness}}\label{proof:well-posedness}
\begin{proof}
By Assumption \ref{Assumption1}, we have that
\begin{equation*}
\int_{\mathbb{R}^d}\left(1+\Phi_1(\tilde{x})\right) \exp \left[c_0 \Phi_1(\tilde{x})    - \iota^{(\lambda)}_C(\tilde{x})  \right] q_\theta^\alpha(\tilde{x})   \mathrm{d} \tilde{x} <+\infty,
\end{equation*}
and we conclude the proof from Proposition 2.3 of \cite{laumont2022bayesian}.

\end{proof}

\subsection{Proof of Lemma \ref{lemma:Lipschitz}}\label{proof:Lipschitz}
\begin{lemma}
Let Assumption \ref{Assumption2} be true. Then, $ \nabla \log p_{\lambda}(x | y)$ is Lipschitz continuous if and only if $ \nabla \log q_\theta(x) $ is Lipschitz continuous.  
\end{lemma}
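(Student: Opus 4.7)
The plan is to start from the explicit form of $p_\lambda(x\mid y)$ in \eqref{definition:posterior_true} and simply take the logarithm. Since the normalizing constant $\int_{\mathbb{R}^d} p(y\mid\tilde x)\,q_\theta^\alpha(\tilde x)\exp(-\iota^{(\lambda)}_C(\tilde x))\,\mathrm{d}\tilde x$ does not depend on $x$, it drops out upon differentiation, leaving the clean decomposition
\begin{equation*}
\nabla \log p_\lambda(x\mid y) \;=\; \nabla \log p(y\mid x) \;+\; \alpha \nabla \log q_\theta(x) \;-\; \nabla \iota^{(\lambda)}_C(x).
\end{equation*}
The idea is that two of these three terms are already Lipschitz continuous, so the Lipschitz property of $\nabla\log p_\lambda(\cdot\mid y)$ and that of $\nabla\log q_\theta$ differ only by a bounded perturbation, yielding the equivalence.

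First I would record that $\nabla\log p(y\mid x)$ is $\mathrm{L}_y$-Lipschitz by Assumption \ref{Assumption2}. Next, I would use the standard Moreau-envelope identity $\nabla\iota^{(\lambda)}_C(x)=(x-\Pi_C(x))/\lambda$, already stated in the paper, together with the fact that for a convex closed set $C$ the projector $\Pi_C$ is non-expansive and $\operatorname{Id}-\Pi_C$ is firmly non-expansive; consequently $\nabla\iota^{(\lambda)}_C$ is $(1/\lambda)$-Lipschitz.

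The forward implication then follows by adding Lipschitz maps: if $\nabla\log q_\theta$ is $\mathrm{L}$-Lipschitz, then from the decomposition the gradient $\nabla\log p_\lambda(\cdot\mid y)$ is $(\mathrm{L}_y+\alpha\mathrm{L}+1/\lambda)$-Lipschitz by the triangle inequality applied to the increment $\|\nabla\log p_\lambda(x_1\mid y)-\nabla\log p_\lambda(x_2\mid y)\|$. For the reverse direction, I would rearrange the same identity as
\begin{equation*}
\alpha\nabla\log q_\theta(x) \;=\; \nabla\log p_\lambda(x\mid y) - \nabla\log p(y\mid x) + \nabla\iota^{(\lambda)}_C(x),
\end{equation*}
apply the triangle inequality, and divide by $\alpha>0$ to conclude that $\nabla\log q_\theta$ is Lipschitz with constant bounded by $(\mathrm{L}'+\mathrm{L}_y+1/\lambda)/\alpha$, where $\mathrm{L}'$ is the Lipschitz constant of $\nabla\log p_\lambda(\cdot\mid y)$.

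There is no real obstacle in this proof; the only thing one must be careful about is to justify the non-expansiveness of $\operatorname{Id}-\Pi_C$ (which requires $C$ to be convex, an assumption already in force throughout the paper) and to note that $\alpha>0$ is needed to invert the role of $\nabla\log q_\theta$ in the decomposition. The argument is an immediate consequence of the fact that the set of functions with Lipschitz gradient is a vector space stable under addition.
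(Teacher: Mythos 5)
Your proposal is correct and follows essentially the same route as the paper's proof: the same decomposition of $\nabla \log p_\lambda(x\mid y)$ into the likelihood gradient, $\alpha\nabla\log q_\theta$, and the Moreau-envelope term $(\Pi_C(x)-x)/\lambda$, combined with Assumption~\ref{Assumption2} and the $1/\lambda$-Lipschitz continuity of the projection term obtained from the firm non-expansiveness of $\operatorname{Id}-\Pi_C$ (which the paper cites from Proposition 12.28 of Bauschke--Combettes). Spelling out both triangle-inequality directions and the division by $\alpha>0$ merely makes explicit what the paper states tersely, so there is no substantive difference.
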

\begin{proof}
Since Assumption \ref{Assumption2} is satisfied, from Algorithm \ref{NF-ULA} and \eqref{NF-ULA-simplified} we have that $\nabla \log p_\lambda(x| y)$ is Lipschitz continuous if and only if $\alpha \nabla \log q_\theta(x) + (\Pi_C(x) - x) /\lambda $ is Lipschitz continuous.

From Proposition 12.28 in \cite{bauschke2011convex}, the operator $(\text{Id} - \operatorname{Prox}_{\iota_C})$ is firmly non-expansive, i.e., for all $x, y\in \mathbb{R}^d$,
\begin{equation*}
\begin{aligned}
\left\|(\Pi_C(x) - x) - (\Pi_C(y) - y) \right\|_2^2 
& \leqslant \left\langle (\Pi_C(x) - x) - (\Pi_C(y) - y), x - y \right\rangle \\
& \leqslant \left\| (\Pi_C(x) - x) - (\Pi_C(y) - y) \right\|_2 \left\| x - y \right\|_2.
\end{aligned}
\end{equation*}
Therefore,  $(\Pi_C(x) - x) / \lambda$ is $1 / \lambda$-Lipschitz. Hence, for any $\alpha > 0$, $\nabla \log p_\lambda(x| y)$ is Lipschitz-continuous if and only if $\nabla \log q_\theta(x)$ is Lipschitz-continuous.  
\end{proof}

\subsection{Proof of Proposition \ref{prop:Lipschitz}}\label{proof:prop_Lipschitz}
\begin{proof}
Without loss of generality, we only need to consider the cases when the total number of layers is $ k= 1, 2$.

(1) We firstly consider the case that $k = 1$  and $T^{-1} = G$ is a composition of only a one-layer  coupling network. Then \eqref{eq:struct_of_inv_map} can be simplified as:
\begin{equation}
    G_j(x_j, x_{<j}) = \varphi_j(x_{<j}) x_j + \eta_j(x_{<j}), \,\,j = 1,\cdots, d.
\end{equation}
Since  $\forall r<j$, $G_{r} $ is independent of $x_j$  and the diagonal of the Jacobian is $(J_G(x))_{j, j} = \varphi_j(x_{<j})$, from the change of variables
\begin{equation}
\begin{aligned}
    q(x)&=q_{z}(z)\left|\operatorname{det} J_{T}(z)\right|^{-1} \quad   \\
    &=q_{z}\left(T^{-1}(x)\right)\left|\operatorname{det} J_{T^{-1}}(x)\right|,
\end{aligned}
\end{equation}
we have that
\begin{equation*}
\begin{aligned}
    \log q_\theta(x) 
    & =  \log q_{z}\left(G(x)\right) + \log \left|\operatorname{det} J_{G}(x)\right| 
    \\
    & = - \dfrac{1}{2}\left\| G(x) \right\|_2^2 + \log \left|\operatorname{det} J_{G}(x)\right| + \text{const}.
    \\
    &
    = -\dfrac{1}{2}\left\| G(x) \right\|^2_2 + \sum_{j = 1}^{d} \log \left| \varphi_j(x_{<j}) \right| + \text{const}.
\end{aligned}
\end{equation*}
Taking the gradient of both sides w.r.t. $x$, we get
\begin{equation}
\begin{aligned}
& \nabla \log q_\theta(x) = - \left(J_{G}(x) \right)^T   G(x) +  \sum_{j = 1}^{d} \nabla \log  \varphi_j(x_{<j}).  
\end{aligned}
\end{equation}
Since $\varphi_j$ is a constant function, we have that $\nabla \log  \varphi_j = 0$. Furthermore as  $\eta_j$ is Lipschitz and $\displaystyle \forall r < j, ~\frac{\partial \eta_j}{\partial x_r}$ is piecewise constant on $\mathbb{R}$,  $\dfrac{\partial \eta_j}{\partial x_r}$ is hence bounded. Meanwhile,     $\displaystyle (J_G(x))_{j, r} = \frac{\partial \eta_j}{\partial x_r}  $, therefore every element of $J_G(x)$ is a bounded piecewise constant function of $x$. Then both $G(x)$ and $ (J_G(x))^\top G(x)$ are Lipschitz, therefore $ \nabla \log q_\theta(x) $ is  Lipschitz.

(2) Secondly, assume that $k = 2$ and $T^{-1} = G\circ H(x)$, where $H:x\mapsto \omega$ and $G:\omega \mapsto z$. Similarly, we have that
\begin{equation*}
\begin{aligned}
    \log q_\theta(x) 
    & =  \log q_{z}\left(G\circ H(x)\right) + \log \left|\operatorname{det} J_{G\circ H}(x)\right| 
    \\
    & = - \dfrac{1}{2}\left\| G\circ H(x) \right\|_2^2 + \log \left|\operatorname{det} J_{G }(\omega)\right| + \log \left|\operatorname{det} J_{H }(x)\right| + \text{const}.
    \\
    &
    = -\dfrac{1}{2}\left\| G\circ H(x) \right\|^2_2 + \sum_{j = 1}^{d} \left( \log \left| \varphi^{(2)}_j(\omega_{<j}) \right|  + \log \left| \varphi^{(1)}_j(x_{<j}) \right| \right) + \text{const}.
\end{aligned}
\end{equation*}
and 
\begin{equation}
\begin{aligned}
\nabla \log q_\theta(x) 
& = - \left(J_{G\circ H}(x) \right)^T   G\circ H (x) + 0   \\
& = - \left(J_{G}(H(x)) J_{H}(x) \right)^T   G\circ H (x).
\end{aligned}
\end{equation}
Since  every element of $J_H(x)$ is a bounded piecewise constant function of $x$,  every element of $J_G(w)$ is a bounded piecewise constant function of $w$, and meanwhile $w = H(x)$ is  continuous w.r.t. $x$,  then every element of $J_{G\circ H}(x)$ is a bounded piecewise constant function of $x$. Then both $G\circ H(x)$ and $ (J_{G\circ H}(x))^\top G\circ H(x)$ are Lipschitz, therefore $ \nabla \log q_\theta(x) $ is  Lipschitz.
\end{proof}
 
\subsection{Proof of theorem \ref{theorem:contractive}}\label{proof:contractive}


\begin{proof}

Denote $R_{\mathrm{C}}=\sup \left\{\left\|x_1-x_2\right\|: x_1, x_2 \in \mathrm{C}\right\}$. Since we have $2 \lambda(\alpha \mathrm{L} -\mathrm{m}_y) \leqslant 1$, from A \ref{Assumption3}, A \ref{Assumption4}, $b_\lambda(x)$ in (\ref{definition:stochastic-kernel}) and the Cauchy-Schwarz inequality we have that for any $x_1, x_2 \in \mathbb{R}^d$,
\begin{equation}
\label{equation:first-contraction}
\begin{aligned}
\left\langle b_{\lambda}\left(x_1\right)-b_{\lambda}\left(x_2\right), x_1-x_2\right\rangle & \leqslant(-\mathrm{m}_y+\alpha \mathrm{L} )\left\|x_1-x_2\right\|^2- \dfrac{\left\|x_1-x_2\right\|^2}{\lambda} + \dfrac{R_{\mathrm{C}}\left\|x_1-x_2\right\|}{\lambda}  \\
& \leqslant-\dfrac{\left\|x_1-x_2\right\|^2}{2 \lambda}+ \dfrac{R_{\mathrm{C}}\left\|x_1-x_2\right\|}{\lambda}  .
\end{aligned}
\end{equation}
For any $x_1, x_2 \in \mathbb{R}^d$ satisfying $\left\|x_1-x_2\right\| \geqslant 4 R_{\text {C }}$, we obtain the contractivity at infinity condition on the drift $b_\lambda$
\begin{equation}
\label{b-contractivity-condition}
\left\langle b_{\lambda}\left(x_1\right)-b_{\lambda}\left(x_2\right), x_1-x_2\right\rangle \leqslant-  \dfrac{\left\|x_1-x_2\right\|^2}{4 \lambda} ,
\end{equation}
which indicates the strongly convexity at infinity.

After simple computation by letting $x_2 = 0$ in (\ref{equation:first-contraction}), we also have that for any $x \in \mathbb{R}^d$,
\begin{equation}
\label{condition-lemma5.1}
\begin{aligned}
& \left\langle b_{\lambda}(x), x\right\rangle \leqslant-\|x\|^2 /(4 \lambda) +\sup _{\tilde{x} \in \mathbb{R}^d}\left\{\left(R_{\mathrm{C}} / \lambda+\|b_\lambda(0)\|\right)\|\tilde{x}\|-\|\tilde{x}\|^2 /(4 \lambda)\right\} .
\end{aligned}
\end{equation}

From A \ref{Assumption2}, A \ref{Assumption3}, $b_\lambda(x)$ in (\ref{definition:stochastic-kernel}) and that  $ (\operatorname{Id}-\Pi_C)  / \lambda$ is $1/\lambda$-Lipschitz, we have that for any $x_1, x_2 \in \mathbb{R}^d$,
\begin{equation}
\label{eq:Lipschitz_drift_lambda}
\left\|b_{\lambda}\left(x_1\right)-b_{\lambda}\left(x_2\right)\right\|_2 \leqslant\left(\mathrm{L}_y+\alpha \mathrm{L} +1 / \lambda\right)\left\|x_1-x_2\right\|_2 .
\end{equation}

Let $\bar{\gamma} = (4 \lambda)^{-1}\left(\mathrm{~L}_y+\alpha \mathrm{L} +1 / \lambda\right)^{-2} $. From \eqref{condition-lemma5.1}  and \eqref{eq:Lipschitz_drift_lambda},  using Lemma SM5.1  in \cite{SupplementaryMaterials} and  we get that there exist $\lambda_V\in \left(0, 1\right]$, $c\geqslant 0$ such that for any $\delta \in (0, \bar{\gamma}]$, $\mathrm{R}_\delta$ satisfies the discrete drift condition $\mathbf{D}_\mathrm{d}\left(  V, \lambda_V^\delta, c\delta  \right)$.

For any probability measure $\nu_1, \nu_2$, from the definition (\ref{definition:V-norm}) and Hölder's inequality we have that 
\begin{equation}
\label{eq:holder_inequality}
\left\|\nu_1-\nu_2\right\|_V \leqslant\left\|\nu_1-\nu_2\right\|_{\mathrm{TV}}^{1 / 2}\left(\nu_1\left[V^2\right]+\nu_2\left[V^2\right]\right)^{1 / 2} .   
\end{equation}

Since $ \bar{\delta}\leqslant \bar{\gamma}$, the contractivity condition (\ref{b-contractivity-condition}) holds, \eqref{eq:holder_inequality} holds, then from Theorem 8 and Corollary 2 in \cite{de2019convergence}, we can find $A_{2 } \geqslant 0$ and $\rho_{2 } \in[0,1)$ such that for any $\delta \in(0, \bar{\delta}], x_1, x_2 \in \mathbb{R}^d$, and $k \in \mathbb{N}$,
\begin{equation}
\begin{aligned}
\left\|\boldsymbol{\delta}_{x_1} \mathrm{R}_{\delta}^k-\boldsymbol{\delta}_{x_2} \mathrm{R}_{ \delta}^k\right\|_\mathrm{TV} 
& \leqslant A_{2 } \rho_{2 }^{k \delta}\left(V\left(x_1\right)+V\left(x_2\right)\right)     
\\
& \leqslant A_{2 } \rho_{2 }^{k \delta}\left(V^2 \left(x_1\right)+V^2 \left(x_2\right)\right)
\\
\mathbf{W}_1\left(\boldsymbol{\delta}_{x_1} \mathrm{R}_{ \delta}^k, \boldsymbol{\delta}_{x_2} \mathrm{R}_{ \delta}^k\right) 
& \leqslant A_{2 } \rho_{2 }^{k \delta}\left\|x_1-x_2\right\|_2.
\end{aligned}
\end{equation}

Then we conclude the proof from \eqref{eq:holder_inequality}. 
\end{proof}

\subsection{Proof of theorem \ref{theorem:nonasymptotic-bias}}\label{proof:nonasymptotic-bias}

\begin{proof}
Most of our proof is based on  \cite{SupplementaryMaterials} and \cite{de2019convergence}.

Recall that 
\begin{equation}
\begin{aligned}
\mathrm{R}_{ \delta }(x, \mathrm{~A}) 
& = (2 \pi)^{-d / 2} \int_{\mathbb{R}^d} \mathbf{1}_{\mathrm{A}}\left(x+\delta b_\lambda(x)+\sqrt{2 \delta} z\right) \exp \left[-\|z\|^2 / 2\right] \mathrm{d} z.
\end{aligned}
\end{equation}
We introduce the stochastic process $\left(\overline{\mathbf{X}}_t\right)_{t \geqslant 0}$, which is exactly the solution of the following SDE:
\begin{equation}
\label{SDE:construction1}
\left\{
\begin{aligned}
\mathrm{d} \overline{\mathbf{X}}_t 
& =b_{\lambda}\left(\overline{\mathbf{X}}_t\right) \mathrm{d} t+\sqrt{2} \mathrm{~d} \mathbf{B}_t 
\\
b_{\lambda}(x) 
& =\nabla \log (p(y | x))+\alpha \nabla \log q_{\theta}(x) + \dfrac{ \Pi_C\left(x\right) - x }{\lambda}
\\
\overline{\mathbf{X}}_0 
& = X_0
\end{aligned}
\right.
\end{equation}
where $\left(\mathbf{B}_t\right)_{t \geqslant 0}$ is a $d$-dimensional Brownian motion. 

From Lemma \ref{lemma:Lipschitz},  $b_{\lambda}$ is $ (\mathrm{L}_y+\alpha \mathrm{L} +1 / \lambda) $-Lipschitz continuous. From Chapter 5, Theorem 2.9 of \cite{karatzas1991brownian} we have that the  SDE (\ref{SDE:construction1}) admits a unique strong solution for any initial condition $\overline{\mathbf{X}}_0$ with $\mathbb{E}\left[\left\|\overline{\mathbf{X}}_0\right\|^2\right]<$ $+\infty$. We denote by $\left(\mathrm{P}_{t}\right)_{t \geqslant 0}$ the semigroup associated with the strong solutions of SDE (\ref{SDE:construction1}). Similarly to the proof of Theorem \ref{theorem:contractive}, replacing Corollary 2 in \cite{de2019convergence} by Theorem 21 and Corollary 22 in \cite{de2019convergence}, there exist $\tilde{A}_{1} \geqslant 0$ and $\tilde{\rho}_{1} \in[0,1)$  such that that for any $x_1, x_2 \in \mathbb{R}^d$ and $t \geqslant 0$,
\begin{equation}
\label{semigroup-contractive}
\begin{aligned}
& \left\|\boldsymbol{\delta}_{x_1} \mathrm{P}_{t }-\boldsymbol{\delta}_{x_2} \mathrm{P}_{t }\right\|_V \leqslant \tilde{A}_{1} \tilde{\rho}_{1}^t\left(V^2\left(x_1\right)+V^2\left(x_2\right)\right), \\
& \mathbf{W}_1\left(\boldsymbol{\delta}_{x_1} \mathrm{P}_{t }, \boldsymbol{\delta}_{x_2} \mathrm{P}_{t }\right) \leqslant \tilde{A}_{1} \tilde{\rho}_{1}^t\left\|x_1-x_2\right\|_2.
\end{aligned}
\end{equation}
Combining (\ref{semigroup-contractive}), Theorem \ref{theorem:contractive},  the fact that $\left(\mathscr{P}_1\left(\mathbb{R}^d\right), \mathbf{W}_1\right)$ is a complete metric space and the Picard fixed point theorem, we can obtain that for any $\delta \in(0, \bar{\delta}]$ there exist $\pi_{\delta, \lambda}, \tilde{\pi}_{\lambda} \in$ $\mathscr{P}_1\left(\mathbb{R}^d\right)$ such that $\pi_{ \delta, \lambda} \mathrm{R}_{  \delta }=\pi_{ \delta, \lambda}$ and for any $t \geqslant 0, \tilde{\pi}_{\lambda} \mathrm{P}_{t }=\tilde{\pi}_{\lambda}$. By Theorem 2.1 in \cite{roberts1996exponential} we have that for any $x \in \mathbb{R}^d$,
\begin{equation}
\left(\mathrm{d} \tilde{\pi}_{\lambda} / \mathrm{dLeb}\right)(x) \propto \exp \left[- \iota^{(\lambda)}_C(x)  \right] p(y | x) p_{\lambda}^\alpha(x),    
\end{equation}
Therefore from (\ref{definition:posterior_measure}) $\pi_\lambda$ and $ \tilde{\pi}_\lambda $ are exactly the same.




Similar to (\ref{generalized-contractive}), from (\ref{semigroup-contractive}) we have that for any $t\geqslant 0$ and $x\in \mathbb{R}^d$,
\begin{equation}
\left\|\boldsymbol{\delta}_{x} \mathrm{P}_{t }-\pi_\lambda  \right\|_V \leqslant \tilde{A}_{1} \tilde{\rho}_{1}^t\left(V^2\left(x\right)+  \int_{\mathbb{R}^d} V^2(\tilde{x}) \mathrm{d} \pi_\lambda(\tilde{x}) \right)
\end{equation}
Since we already proved that $\int_{\mathbb{R}^d} V^2(\tilde{x}) \mathrm{d} \pi_\lambda(\tilde{x}) < +\infty$ in Lemma \ref{lemma:finite-moment},  we can find $B_1 \geqslant 0$ such that for any $x \in \mathbb{R}^d$  we have
\begin{equation}
\label{error-P}
\left\|\boldsymbol{\delta}_x \mathrm{P}_{t} -  \pi_{\lambda} \right\|_V \leqslant B_1 \tilde{\rho}_{1}^t V^2(x).
\end{equation}

Select a large $m_1 \in \mathbb{N}^*$ such that  $m_1 \geqslant \bar{\delta}^{-1}$.  Let's now consider the interval $[0, l]$, $l \in \mathbb{N}^*$. To compare  $\pi_{\delta, \lambda}$ with $\pi_{\delta }$, we first construct a continuous time Markov process $X_t^{(1)}$ such that $X^{(1)}_{j/m_1}$ has the same distribution as the $j$-th sample $X_j$ by NF-ULA (\ref{NF-ULA-simplified}). 

Define 
$\displaystyle b_1\left(t,\left(w_t\right)_{t \in[0, l]}\right)=\sum_{j=0}^{m_1 l-1} \mathbf{1}_{[j / m_1,(j+1) / m_1)}(t) b_{\lambda}\left(w_{j / m_1}\right)$
  and $ b_2\left(t,\left(w_t\right)_{t \in[0, l]} \right)=b_{\lambda}\left(w_t\right)$. Let $\mathbf{X}_t^{(1)}$ and $\mathbf{X}_t^{(2)}$ be the unique strong solution of  SDE $\mathrm{d} \mathbf{X}_t=b\left(t,\left(\mathbf{X}_t\right)_{t \in[0,l]}\right)\mathrm{d}t +\sqrt{2} \mathrm{d} \mathbf{B}_t$ with $\mathbf{X}_0=x  \in \mathbb{R}^d$ and $b=b_1$, respectively $b=b_2$. Note that $\left(\mathbf{X}_{k / m_1}^{(1)}\right)=\left(X_k\right)_{k \in \mathbb{N}}$ and $\left(\mathbf{X}_t^{(2)}\right)_{t \geqslant 0}=\left(\overline{\mathbf{X}}_t\right)_{t \geqslant 0}$. Denote ${P}_t^{(1)}$ and ${P}_t^{(2)}$ the Markov semigroup associated with $\mathbf{X}_t^{(1)}$ and $\mathbf{X}_t^{(2)}$. Then for any $x \in \mathbb{R}^d$, $k\in \mathbb{N}^*$ we have
\begin{equation}
\label{single-error}
\boldsymbol{\delta}_x \mathrm{R}_{  1 / m_1 }^{km_1} = \boldsymbol{\delta}_x \mathrm{P}_k^{(1)}, \quad \boldsymbol{\delta}_x \mathrm{P}_{k} =  \boldsymbol{\delta}_x \mathrm{P}_k^{(2)} .
\end{equation}

From Lemma \ref{lemma:Lipschitz} and A \ref{Assumption3}, for any $t \in[j / m_1,(j+1) / m_1), j \in\{0, \ldots, m_1l - 1\}$ and $\left(w_t\right)_{t \in[0,l]} \in \mathrm{C}\left([0,l], \mathbb{R}^d\right)$ we have that 
\begin{equation}
\label{bound-b1-b2}
\begin{aligned}
\left\|b_1\left(t,\left(w_t\right)_{t \in[0,l]}\right)-b_2\left(t,\left(w_t\right)_{t \in[0,l]}\right)\right\|^2 
& =\left\|b_{\lambda}\left(w_{j / m_1}\right)-{b}_{\lambda}\left(w_t\right)\right\|^2 \\
& \leqslant \left(  \mathrm{L}_y+\alpha \mathrm{L} +1 / \lambda  \right)^2\left\|w_{j / m_1}-w_t\right\|^2.
\end{aligned}
\end{equation}
Using Cauchy-Schwarz inequality, Hölder's inequality and Itô's isometry we have for any $t \in[j / m_1,(j+1) / m_1)$,
\begin{equation}
\label{eq:error_from_sup_Ito}
\begin{aligned}
\mathbb{E}\left[\left\|\mathbf{X}_t^{(2)}-\mathbf{X}_{j / m_1}^{(2)}\right\|^2\right]
& = \mathbb{E}\left[\left\|  \int_{j / m_1}^t \left( b_\lambda\left(  \mathbf{X}_{\tau}^{(2)}  \right) \mathrm{d}\tau + \sqrt{2} \mathrm{~d} \mathbf{B}_\tau \right)\right\|^2\right] \\
& \leqslant \mathbb{E}\left[2 \left\| \int_{j / m_1}^t   b_\lambda\left(  \mathbf{X}_{\tau}^{(2)}  \right) \mathrm{d}\tau \right\|^2+2 \left\|\sqrt{2}   \left(\mathbf{B}_t - \mathbf{B}_{j/m_1} \right)\right\|^2 \right] \\
& \leqslant 2\left(t - \frac{j}{m_1}\right) \mathbb{E} \left[  \int_{j / m_1}^t   \left\| b_\lambda\left(  \mathbf{X}_{\tau}^{(2)}  \right) \right\|^2\mathrm{d}\tau \right]  + 4 d\left(t - \dfrac{j}{m_1}\right)  \\
& \leqslant 2\left(t - \frac{j}{m_1}\right)^2  \sup_{\tau\leqslant (j+1)/m_1}  \mathbb{E} \left\| b_\lambda\left(  \overline{\mathbf{X}}_\tau  \right) \right\|^2  + 4 d\left(t - \dfrac{j}{m_1}\right)
\end{aligned}
\end{equation}

Since we have proved \eqref{b-contractivity-condition}, \eqref{condition-lemma5.1}, \eqref{eq:Lipschitz_drift_lambda} in Appendix \ref{proof:contractive}, from Lemma 2.11 and Lemma 2.12 in \cite{majka2020nonasymptotic}, for any $\tau>0$ we have
\begin{equation}
\label{eq:bounded_expectation_X}
\mathbb{E} \left\| \overline{\mathbf{X}}_\tau  \right\|^2 \leqslant B_{0, 0},
\end{equation}
where $B_{0, 0}$ is an upper bound formed by $\lambda, C, b_\lambda(0), d, x$. Then from \eqref{eq:Lipschitz_drift_lambda} we have that
\begin{equation}
\label{eq:bound_drift_expectation}
\mathbb{E} \left\| b_\lambda\left(  \overline{\mathbf{X}}_\tau  \right) \right\|^2 \leqslant 2\left(\mathrm{L}_y+\alpha \mathrm{L} +1 / \lambda\right)^2 \mathbb{E} \left\| \overline{\mathbf{X}}_\tau \right\|^2 +2\left\| b_\lambda\left(  0  \right) \right\|^2 \leqslant B_{3}, \quad \forall \tau > 0,
\end{equation}
where $B_{3} = 2\left(\mathrm{L}_y+\alpha \mathrm{L} +1 / \lambda\right)^2 B_{0, 0} +2\left\| b_\lambda\left(  0  \right) \right\|^2 \geqslant 0$.

Then from (\ref{bound-b1-b2}), \eqref{eq:error_from_sup_Ito}, \eqref{eq:bound_drift_expectation},  for $i \in \{0, \cdots l - 1 \}$ we have that

\begin{equation}
\begin{aligned}
&\int_{i}^{i+1} \mathbb{E}\left[\left\|b_1\left(t, \mathbf{X}_t^{(2)}\right)-b_2\left(t, \mathbf{X}_t^{(2)}\right)\right\|^2\right] \mathrm{d} t \\
& \leqslant \sum_{j = i m_1 }^{(i+1)m_1 - 1} \int_{j / m_1}^{(j+1)/m_1} \mathbb{E}\left[\left\|b_1\left(t, \mathbf{X}_t^{(2)}\right)-b_2\left(t, \mathbf{X}_t^{(2)}\right)\right\|^2\right] \mathrm{d} t \\
& \leqslant m_1(\mathrm{L}_y+\alpha \mathrm{L} +1 / \lambda )^2  \int_{j / m_1}^{(j+1)/m_1} \mathbb{E}\left[\left\|\mathbf{X}_t^{(2)}-\mathbf{X}_{j / m_1}^{(2)}\right\|^2\right] \mathrm{d} t \\
& \leqslant (\mathrm{L}_y+\alpha \mathrm{L} +1 / \lambda )^2 \left( \frac{2B_{3}}{ 3m_1^2}+ \frac{2d}{m_1}  \right).
\end{aligned}
\end{equation}

From (\ref{single-error}) and Lemma SM6.1 in \cite{SupplementaryMaterials}, we obtain that there exists $B_b \geqslant 0$ such that for any $x \in \mathbb{R}^d$,
\begin{equation}
\label{error-1}
\begin{aligned}
& \left\|\boldsymbol{\delta}_x \mathrm{R}_{1 / m_1}^{lm_1}-\boldsymbol{\delta}_x \mathrm{P}_{l}\right\|_V 
 = \left\|\boldsymbol{\delta}_x \mathrm{P}_l^{(1)}-\boldsymbol{\delta}_x \mathrm{P}_l^{(2)}\right\|_{V} =  \left\| \boldsymbol{\delta}_x \mathrm{P}_l^{(2)} -  \boldsymbol{\delta}_x \mathrm{P}_l^{(1)}\right\|_{V}
\\
& \leqslant\left(\boldsymbol{\delta}_x \mathrm{P}_l^{(1)}\left[V^2\right]+\boldsymbol{\delta}_x \mathrm{P}_l^{(2)}\left[V^2\right]\right)^{1 / 2} \times \left(   \sum_{i = 0}^{l - 1}\int_i^{i+1} \mathbb{E}\left[\left\|b_1\left(t, \mathbf{X}_t^{(2)}\right)-b_2\left(t, \mathbf{X}_t^{(2)}\right)\right\|^2\right] \mathrm{d} t\right)^{1 / 2}
\\
& \leqslant  (\mathrm{L}_y+\alpha \mathrm{L} +1 / \lambda )  \sqrt{l\left( \frac{2B_{3}}{ 3m_1^2}+ \frac{2d}{m_1}  \right) } \left(\boldsymbol{\delta}_x \mathrm{P}_t^{(1)}\left[V^2\right]+\boldsymbol{\delta}_x \mathrm{P}_t^{(2)}\left[V^2\right]\right)^{1 / 2}.
\\
\end{aligned}
\end{equation}

Assume that there is a function $W \in \mathrm{C}^2\left(\mathbb{R}^d,[1,+\infty)\right)$ such that $\lim _{\|x\| \rightarrow+\infty} W(x)=+\infty$. Recall that from (\ref{condition-lemma5.1}),  using Lemma SM5.1  in \cite{SupplementaryMaterials} and  we get that there exist $\lambda_W\in \left(0, 1\right]$, $c,\beta\geqslant 0$ and $\zeta > 0$ such that for any $\delta \in \left(0, (4 \lambda)^{-1}\left(\mathrm{~L}_y+\alpha \mathrm{L} +1 / \lambda\right)^{-2}\right]$, $\mathrm{R}_\delta$ satisfies the discrete drift condition $\mathbf{D}_\mathrm{d}\left(  W, \lambda_W^\delta, c\delta  \right)$ and $\left(\mathrm{P}_t\right)_{t \geqslant 0}$ satisfies the 
continuous drift condition $\mathbf{D}_{\mathrm{c}}(W, \zeta, \beta)$. From Lemma SM5.2 in \cite{SupplementaryMaterials}, there exists $B_c \geqslant 0$ such that for any $x \in \mathbb{R}^d, t \geqslant 0$ and $k \in \mathbb{N}^*$ we have
\begin{equation}
\mathrm{R}_\delta^k W(x)+\mathrm{P}_t W(x) \leqslant B_c^2 W(x) .
\end{equation}
Let $W(x) = V^2(x)$ and $k = m_1l$, $\delta = 1/m_1$, $t = l$,  then  $ \forall x\in \mathbb{R}^d$,
\begin{equation}
\boldsymbol{\delta}_x \mathrm{P}_l^{(1)}\left[V^2\right]+\boldsymbol{\delta}_x \mathrm{P}_l^{(2)}\left[V^2\right] \leqslant B_c^2 V^2(x)
\end{equation}
Combined with (\ref{error-1}), we have that 
\begin{equation}
\left\|\boldsymbol{\delta}_x \mathrm{R}_{1 / m_1}^{m_1l}-\boldsymbol{\delta}_x \mathrm{P}_{l}\right\|_V \leqslant   B_c V(x)   (\mathrm{L}_y+\alpha \mathrm{L} +1 / \lambda )  \sqrt{l\left( \frac{2B_{3}}{ 3m_1^2}+ \frac{2d}{m_1}  \right) }
\end{equation}


To give a bound on $\left\| \boldsymbol{\delta}_x \mathrm{R}_{1/m_1}^{m_1l} - {\pi}_{\lambda}\right\|_V$, we use triangular inequality to split it into two terms: 
\begin{equation}
\begin{aligned}
& \left\| \boldsymbol{\delta}_x \mathrm{R}_{1/m_1}^{m_1l} - {\pi}_{\lambda}\right\|_V \leqslant 
  \left\| \boldsymbol{\delta}_x \mathrm{R}_{1/m_1}^{m_1l} - \boldsymbol{\delta}_x \mathrm{P}_l \right\|_V + \left\| \boldsymbol{\delta}_x \mathrm{P}_l -{\pi}_{\lambda}\right\|_V .
\end{aligned}
\end{equation}
Using this result and (\ref{error-P}), we obtain that there exists $B_1, B_2 \geqslant 0$ such that for any $m_1 \in \mathbb{N}^*$ with $ 1/m_1 \leqslant \bar{\delta} $,
\begin{equation}
\left\| \boldsymbol{\delta}_x \mathrm{R}_{1/m_1}^{m_1l} -  \pi_{\lambda}\right\|_V 
 \leqslant B_1 \tilde{\rho}_{1}^l V^2(x) + B_2 V(x)\sqrt{l\left( \frac{B_{3}}{ 3m_1^2}+ \frac{d}{m_1}  \right) } . 
\end{equation}
The proof in the general case where $\delta \in(0, \bar{\delta}]$ is similar when the interval $[0, l] $ is changed to $[0, lm_1\delta]$. 

Then we obtain that there exists $B_1, B_2, B_3 \geqslant 0$, $\tilde{\rho}_1\in [0, 1)$ such that for any $\delta \in(0, \bar{\delta}]$, $k\in \mathbb{N}^*$,
\begin{equation}
\left\| \boldsymbol{\delta}_x \mathrm{R}_{\delta}^{k} -  \pi_{\lambda}\right\|_V 
 \leqslant B_1 \tilde{\rho}_{1}^{k\delta} V^2(x) + B_2 V(x)  \sqrt{\delta^2 k \left(d + \dfrac{B_3 \delta}{3}\right)    } . 
\end{equation}

\end{proof}

\section{Additional experiments}\label{sec:CT-poisson}

The second limited-angle computed tomography reconstruction experiment we test is using the Poisson noise, where the model can be formulated as $y \sim P(Ax)$   and $P(Ax)$ denotes adding a  Poisson noise on $Ax$. We simulate the noisy sinogram as 
\begin{equation*}
y = -\frac{1}{\mu} \log \left(\frac{N_1}{N_0}\right),\quad N_1 \sim \operatorname{Poisson}\left(N_0 \exp ( - A(x) \mu)\right).
\end{equation*}
Here $N_0=4096$ is the mean photon count per detector bin without attenuation. $\mu=0.05$ is a  constant. Since Poisson noise implies a different likelihood
\begin{equation*}
\begin{aligned}
p(y| x) &= \dfrac{1}{K_{0}}\exp(- J(x, y)), \\
J(x, y) &=\sum_{i=1}^m e^{-A(x)_i \mu} N_0+e^{-y_i \mu} N_0\left(A(x)_i \mu-\log \left(N_0\right)\right),
\end{aligned}
\end{equation*}
we calculate $\nabla \log  p(y| x) = - \nabla J(x, y)$ by using the auto-gradient library.

We select a different $\alpha = 4000$ for NF-ULA while keeping all the other settings the same as in the main paper. 

\begin{figure}[!htbp]
    \caption{Limited-view CT reconstruction with Poisson noise. Column 1: Original image. Column 2: Filtered back projection (FBP). Columns 3, and 4: Posterior mean and the standard deviation of the samples generated by PnP-ULA (realSN-DnCNN). Columns 5, and 6: Posterior mean and the standard deviation of the samples generated by NF-ULA (patchNR). PSNR values of the sample mean images are provided in Table \ref{tab:CT_Poisson_noise_limited}.}
    \centering
    \includegraphics[width=0.95\linewidth]{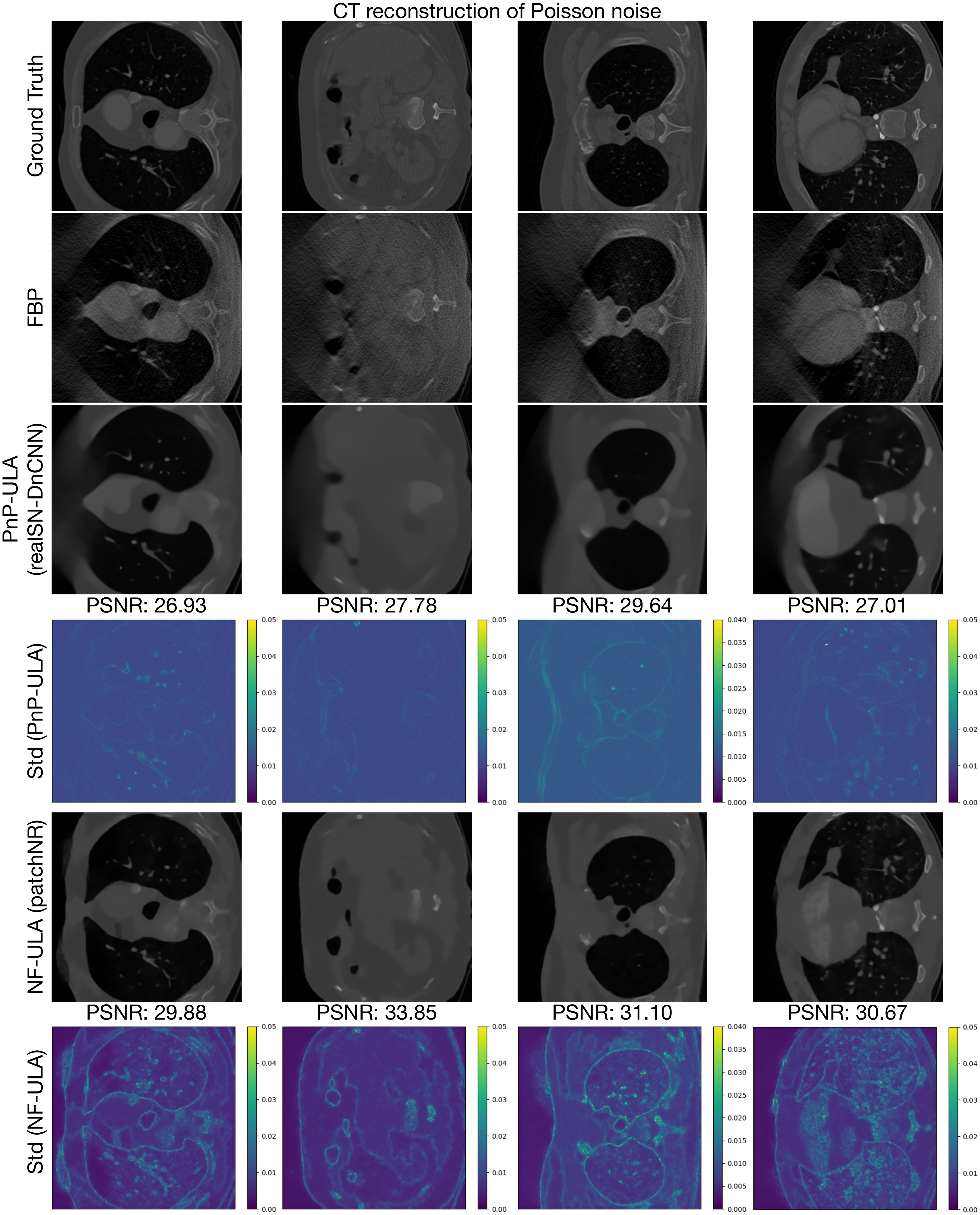}
    \label{fig:CT_Poisson_noise_limited} 
\end{figure}

Both PnP-ULA and NF-ULA have burn-in iterations of more than $ 20000 $. After the burn-in time, we calculate the posterior mean and the standard deviation by obtaining $10000$ samples and computing the PSNR of the samples' mean. For Poisson noise, the likelihood is more complicated than Gaussian, and NF-ULA spends 510s. 

Fig \ref{fig:CT_Poisson_noise_limited} includes the original image, the FBP, the posterior mean and the standard deviation of PnP-ULA (realSN-DnCNN) and NF-ULA (patchNR). Table \ref{tab:CT_Poisson_noise_limited} provides the PSNR of the posterior mean. All the samples generated in Table \ref{tab:CT_Poisson_noise_limited} never escape $ [-0.2, 1.2]^d $, indicating that the projection $\Pi_C(x)$ is never activated. Note that the huge uncertainties of standard deviation on the left area in the Gaussian-noise case in the main paper are slightly alleviated in the Poison noise experiments.  The ACF test results are similar to the CT experiment with Gaussian noise, therefore here we do not repeat them again.

\begin{table}[!t]
\caption{CT reconstruction of Poisson noise, limited angles.}
\centering
\begin{tabular}{|l|r|r|r|r|r|r|r|}
\hline
CT      & \multicolumn{3}{|l|}{ $C = [-100, 100]^d$ } \\
\hline
        & network       &  parameters        & PSNR     \\
\hline
figure1 &     \multicolumn{3}{|l|}{ } \\
\hline
NF-ULA  & PatchNR       & $  \alpha = 4000$ & 29.88   \\
\hline 
PnP-ULA & realSN-DnCNN  & $  \alpha = 3$    & 26.93    \\
\hline
figure2 &     \multicolumn{3}{|l|}{ } \\
\hline
NF-ULA  & PatchNR       & $  \alpha = 4000$ & 33.85   \\
\hline 
PnP-ULA & realSN-DnCNN  & $  \alpha = 3$    & 27.78    \\
\hline
figure3 &     \multicolumn{3}{|l|}{ } \\
\hline
NF-ULA  & PatchNR       & $  \alpha = 4000$ & 31.10   \\
\hline 
PnP-ULA & realSN-DnCNN  & $  \alpha = 3$    & 29.64    \\
\hline
figure4 &     \multicolumn{3}{|l|}{ } \\
\hline
NF-ULA  & PatchNR       & $  \alpha = 4000$ & 30.67   \\
\hline 
PnP-ULA & realSN-DnCNN  & $  \alpha = 3$    & 27.01    \\
\hline
\end{tabular}
\label{tab:CT_Poisson_noise_limited}
\end{table}

\end{document}